%% file: main.tex
\documentclass{article}
\usepackage{graphicx} 

\title{Non-rectifiable Delone sets under pointwise co-Lipschitz bijections}
\author{Ashwin Bhat and Michael Dymond}
\date{\today}
\input{macros}

\begin{document}

\maketitle
\begin{abstract}
    For each $d\in\N_{\geq 2}$ we construct a Delone set $Y$ in $\R^{d}$ for which every Lipschitz bijection from $Y$ to $\Z^{d}$ has a very irregular inverse. For example, the inverse fails to be Lipschitz, even at just a single point. Further, we clarify the relationship between several notions of regularity for bijections between Delone sets, studied in the literature.
\end{abstract}

\section{Introduction}
Delone sets are good models for crystalline structures, which have been of great interest in mathematical literature since Schectman first observed naturally occurring quasicrystals in 1984 \cite{schechtman1}, an achievement that eventually won him the Nobel Prize for Chemistry in 2011.
The study of Delone sets encompasses a wide range of fields beyond quasicrystal theory, including metric geometry, dynamical systems and geometric group theory; it was in the latter two in which Furstenberg (\cite{burago2002rectifying} for more discussion) and Gromov \cite{gromov1992asymptotic} independently asked whether every Delone set in $\R^d$ admits a bi-Lipschitz bijection with with $\Z^d,$ i.e., whether they are \emph{bi-Lipschitz--rectifiable.}  In the late 1990s, McMullen \cite{mcmullen1998lipschitz} and Burago and Kleiner \cite{burago1998separated} showed for each $d \in \N_{\geq 2}$ that Furstenberg and Gromov's question witnesses a negative answer.

Remarkably, this question in discrete geometry was answered by transforming it into a continuous problem. McMullen \cite{mcmullen1998lipschitz} and Burago and Kleiner \cite{burago1998separated} showed that if every Delone set in $\R^d$ is bi-Lipschitz-rectifiable, then every measurable density $\rho:[0,1]^d \to \R_{>0}$ satisfying $0<\inf\rho \leq \sup \rho <\infty$ is the Jacobian of a bi-Lipschitz map $f:[0,1]^d \to \R^d.$ The Jacobian of a bi-Lipschitz map demonstrates local properties which are violated by `checkerboard-like' densities whose average values fluctuate too quickly over small regions. The construction of such densities $\rho$ (see \cite[Chapter $3$]{mcmullen1998lipschitz}, \cite[Lemma $2.1$]{burago1998separated} for examples) hence gives rise to a non-bi-Lipschitz-rectifiable Delone set. In fact, it was shown in \cite{dymond2018mapping} that the set of continuous densities which are the Jacobian of a bi-Lipschitz map is a $\sigma-$porous subset of $C([0,1]^d, \R).$ In this sense, `almost every' positive continuous function gives rise to a Delone set in $\R^d$ non-bi-Lipschitz equivalent to $\Z^d.$

The motivation of the present paper is a question posed by Viera \cite[pg. 3]{viera2023delone}:
\begin{question} \label{question: lipschitz}
    Given $d \in \N$, does every Delone set in $\R^d$ admit a Lipschitz bijection with $\Z^d$?
\end{question}
 When $d = 1,$ it is quick to prove a positive answer to Question \ref{question: lipschitz} since an order-preserving bijection from a Delone set in $\R$ to $\Z$ is always Lipschitz. For each $d \in \N_{\geq 2}$, this question remains open. A main contribution of the present paper, is to provide Delone sets in Euclidean spaces of dimension at least two, admitting no bijections to $\Z^{d}$ belonging to a class of mappings smaller than Lipschitz, but larger than bilipschitz. Specifically we are able to rule out Lipschitz bijections $f$ which are pointwise co-Lipschitz, i.e. having a pointwise Lipschitz inverse. A mapping is pointwise Lipschitz if no single point of its domain witnesses the failure of the Lipschitz condition. Put differently, this means that for any point of its domain the standard quotients for checking the Lipschitz condition involving that point are bounded in size; see Definition~\ref{define: bilip with set}.
 
This follows a building research interest in capturing the extent to which Delone sets may diverge from the integer lattice, by studying weaker forms of rectifiability, based on replacing the bilipschitz condition, with weaker forms of regularity. On the positive side, \cite[Theorem $5.1$]{mcmullen1998lipschitz} shows that every Delone set in $\R^d$ admits a bi-H\"{o}lder homogeneous bijection with $\Z^d$. In comparison, \cite[Theorem $1.2$]{dymond2023highly} shows the existence of a Delone set in $\R^d$ which does not admit a bi-$\omega-$homogeneous bijection with $\Z^d$, where $\omega(t) = t\log(\frac{1}{t})^{\alpha_0(d)}.$
For each $d \in \N_{\geq 2}$, an emerging line of research explores different classes of bijections between Delone sets, with a particular emphasis on weakening the regularity of their inverse. In the present paper, we adopt a proof strategy stemming from \cite{burago1998separated,mcmullen1998lipschitz}, which we call the \emph{pushforward method}: 

Let $\mathcal{K} \subseteq \R^d$ and $\mathcal{F}$ and $\mathcal{G}$ be classes of functions.
\begin{enumerate}[(A)] 
    \item \label{enumerate: assumption before}\label{enumerate: construction before}  Assume every Delone set in $\R^d$ admits an $\mathcal{F}-$bijection onto $\Z^d$.
Then show for each measurable density $\rho:\mathcal{K}\to (0,\infty)$ with $0 < \inf \rho \leq \sup \rho < \infty$ that there exists a $\mathcal{G}-$solution $f:\mathcal{K} \to \R^d$ to 
    \begin{equation} \label{eq: pushforward f(K)}
        \qquad f\#(\rho \leb) = \leb|_{f(\mathcal{K})}.
    \end{equation} 
    \item \label{enumerate: bad rho before} Construct a measurable density $\rho:\mathcal{K} \to (0,\infty)$ with $0 < \inf \rho \leq \sup \rho < \infty$ such that $\rho$ admits no $\mathcal{G}-$solution $f:\mathcal{K} \to \R^d$ to \eqref{eq: pushforward f(K)}.
\end{enumerate}
 The construction in point \ref{enumerate: bad rho before} contradicts the assumption in point \ref{enumerate: assumption before}, so we emerge with a Delone set in $\R^d$ with no $\mathcal{F}-$bijection with $\Z^d.$
 
We highlight that if $f$ is an almost everywhere differentiable homeomorphism, then by the change of variables formula, equation \eqref{eq: pushforward f(K)} is equivalent to the so-called \emph{prescribed Jacobian equation}
\begin{equation} \label{eq: rho jac}
    \rho = |\text{Jac}(f)| \qquad \text{almost everywhere.}
\end{equation} This is particularly relevant when $\mathcal{G}$ is the class of bi-Lipschitz functions. Such a density $\rho$ with no bi-Lipschitz solution $f$ to \eqref{eq: rho jac} was given the name \emph{non-realisable} in \cite{mcmullen1998lipschitz}. However, it is not just in this context where the solvability of \eqref{eq: rho jac} has been of interest: see \cite{dacorogna2007direct,riviere1996resolutions,dymond2018mapping,dymond2023highly} for applications in the fields of mathematical physics, differential equations, metric geometry and functional analysis. 

The reason for using the more general equation \eqref{eq: pushforward f(K)} in the pushforward method is because, in other related literature, the class $\mathcal{G}$ typically does not consist only homeomorphisms. However, by \cite[Lemma~$12.6$]{david1997fractured}, we have that any Lipschitz mapping $f$ satisfying \eqref{eq: pushforward f(K)} is \emph{Lipschitz regular} (see Definition~\ref{define: lip reg}). This crucial fact means, by decomposing $f$ on a suitable open set into finitely many bi-Lipschitz mappings, the authors of \cite{dymond2018mapping} could study the solvability of the pushforward equation \eqref{eq: pushforward f(K)} as an extension of  \cite{burago1998separated,mcmullen1998lipschitz} for the prescribed Jacobian equation \eqref{eq: rho jac}.

All existing detailed accounts of the pushforward method take $\mathcal{K} = [0,1]^d$. The compactness of $[0,1]^d$ allows for point \ref{enumerate: construction before} to be shown by constructing a Delone set which \emph{encodes $\rho$}, a notion introduced in \cite{burago1998separated} and named in \cite[Definition~$1.1$]{bhat2025fast}. Roughly speaking, the idea is to take a disjoint sequence of cubes in $\R^d$ and insert finer and finer `approximations' of $\rho$ -- see \cite{bhat2025fast,dymond2018mapping} for more details. In the current paper we take $\mathcal{K} = \R^d,$ and introduce a new way to encode $\rho:$
\begin{restatable}{define}{encodingdefinition} \label{defn: Y encoding rho}
    Given $d \in \N$, we say that a Delone set $Y \subset \R^d$ \emph{encodes} a measurable density ${\rho\in C_b(\R^d,\R_{>0})}$ if there exists a sequence of positive numbers $(T_n)_{n \in \N} \to \infty$ such that if 
    \begin{equation*} \label{eqn: Y encoding rho}
        \mu_n(A) \coloneqq  \tfrac{1}{T_n^d}{|\tfrac{1}{T_n}Y \cap A|,} \qquad A \subseteq \R^d, \qquad n \in \N
    \end{equation*} then $(\mu_n)_{n \in \N}$ converges vaguely to $\rho \mathcal{L}$ on $\R^d$, where $\leb$ denotes the Lebesgue measure and $\rho\leb$ stands for the measure given by the formula $\rho\leb(A)=\int_{A}\rho\;d\leb$.
\end{restatable} Visually, whenever $d \in \N_{\geq 2}$, a Delone set that encodes $\rho$ in this fashion has finer and finer `approximations' of restrictions of $\rho$ in growing cubes around $0.$

A ubiquitous class of functions in the present paper is that of pointwise co-Lipschitz bijections:
\begin{define} \label{define: bilip with set}
        Let $d \in \N$, $K,X \subseteq \R^d$ and $f:K \to X$. Then $f$ is \emph{Lipschitz at $y \in K$} if there exists $L=L(y)>0$ such that $$||f(x)-f(y)||\leq L||x-y|| \qquad \text{ for every }x \in K.$$ Moreover, we say $f$ is \emph{pointwise Lipschitz} if $f$ is Lipschitz at $y$ for every $y \in K,$ and we say $f$ is \emph{nowhere Lipschitz} if $f$ is not Lipschitz at $y$ for every $y \in K.$ In all of the properties defined for $f$, if the word `Lipschitz' is replaced with `co-Lipschitz', then we mean that $f$ is a bijection and the property holds for $f^{-1}.$
    \end{define}  
    An important classification by Proposition \ref{thm: bilip} is the dichotomy of Lipschitz bijections between Delone sets: such a mapping is either pointwise co-Lipschitz or nowhere co-Lipschitz. We postpone the proof of Proposition \ref{thm: bilip} to the preliminaries section since it is straightforward.
\begin{restatable}{prop}{bilip} \label{thm: bilip}
    Let $d \in \N$, $U \subset \R^d$, $V \subset \R^d$ be a Delone set, $a \in U$ and $f:U \to V$ be a Lipschitz bijection. Then $f$ is co-Lipschitz at $f(a)$ if and only if $f$ is pointwise co-Lipschitz.
\end{restatable}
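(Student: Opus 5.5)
The backward implication is immediate: if $f$ is pointwise co-Lipschitz, then $f^{-1}$ is Lipschitz at every point of $V$, in particular at $f(a)$. So the work is in the forward implication. Assume $f^{-1}$ is Lipschitz at $f(a)$ with constant $L_0$, that is,
\[
\|x-a\|\le L_0\|f(x)-f(a)\| \qquad \text{for all } x\in U .
\]
We must show that $f^{-1}$ is Lipschitz at every $v=f(b)\in V$. The plan is to move the basepoint from $a$ to $b$ using the triangle inequality, and to absorb the resulting additive error with the uniform discreteness of the Delone set $V$.

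Let $r>0$ be a separation constant for $V$, so that $\|v-w\|\ge r$ for distinct $v,w\in V$. Fix $b\in U$ and let $x\in U$ with $x\neq b$. Then $f(x)\neq f(b)$ because $f$ is a bijection, so $\|f(x)-f(b)\|\ge r$. By the triangle inequality and the hypothesis at $a$,
\[
\|x-b\|\le \|x-a\|+\|a-b\| \le L_0\|f(x)-f(a)\| + \|a-b\| .
\]
Next, bound $\|f(x)-f(a)\|\le \|f(x)-f(b)\|+\|f(b)-f(a)\|$. Every constant term produced this way, namely $L_0\|f(b)-f(a)\|+\|a-b\|$, is a fixed number $C_b$ that depends only on $b$ and $a$. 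Since $\|f(x)-f(b)\|\ge r$, we have $C_b\le (C_b/r)\|f(x)-f(b)\|$. Combining these gives
\[
\|x-b\|\le \Bigl(L_0+\tfrac{C_b}{r}\Bigr)\|f(x)-f(b)\| \qquad \text{for all } x\in U ,
\]
where the case $x=b$ is trivial. Hence $f^{-1}$ is Lipschitz at $f(b)$, and since $b$ was arbitrary, $f$ is pointwise co-Lipschitz.

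The only real step is spotting that uniform discreteness of $V$ turns additive constants into multiplicative ones. The Lipschitz property of $f$ is not even needed for this argument; it appears only as a standing assumption. Note also that no structure on $U$ is required.
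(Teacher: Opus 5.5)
Your proof is correct, and it takes a genuinely different and more economical route than the paper's.

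The paper first restricts to points $b$ with $0<\|f(b)-f(a)\|\le 2S$, where $S$ is the covering radius of $V$. For $x$ far from $a$ (namely $\|x-a\|\ge 4S/\beta$), it derives an explicit lower bound on $\|f(x)-f(b)\|/\|x-b\|$ from the triangle inequality. For the remaining $x$, it uses the Lipschitz property of $f$ together with uniform discreteness of $V$ to see that only finitely many such $x$ exist, and takes a positive but non-explicit minimum over them. Co-Lipschitzness is then propagated from $f(a)$ to every point of $V$ by induction along chains in $V$ whose steps have length at most $2S$.

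You instead handle an arbitrary $b$ in a single step: you absorb the additive constant $C_b=L_0\|f(b)-f(a)\|+\|a-b\|$ into the multiplicative one via $\|f(x)-f(b)\|\ge r$. This has several advantages. You need only injectivity of $f$ and uniform discreteness of $V$; neither the Lipschitz property of $f$ nor the relative density of $V$ is used. There is no case split, finiteness argument or chaining. You also obtain the explicit constant $L_0+C_b/r$ at $f(b)$.

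The paper's approach yields nothing beyond this for the present statement. Indeed, your absorption trick is the same one the paper itself uses in the proof of Theorem~\ref{thm: co-hom equivalent to pointwise BL}, where additive terms are controlled via $\|x-a\|\ge s$ and $\|x\|\ge p$.
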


In the current paper, for a given $d \in \N_{\geq 2}$, we use a modified version of the pushforward method:
\begin{enumerate}[(A')] 
    \item \label{enumerate: assumption} \label{enumerate: pushforward restriction} Assume every Delone set in $\R^d$ admits a Lipschitz, pointwise co-Lipschitz bijection to or from $\Z^d$.
    Then show for each measurable density $\rho:\R^d\to (0,\infty)$ with $0 < \inf \rho \leq \sup \rho < \infty$ that there exists a Lipschitz regular solution $f:\R^d \to \R^d$ to either
    \begin{equation} \label{eq: pushforward on compact sets}
        \qquad f\#(\rho \leb) = \leb,
    \end{equation} 
or 
 \begin{equation} \label{eq: jac inverse pushforward on compact sets}
        \qquad f\#\leb = \rho\leb.
    \end{equation}
    \item \label{enumerate: bad rho} Construct a measurable density $\rho:\R^d \to (0,\infty)$ with $0 < \inf \rho \leq \sup \rho < \infty$ such that every Lipschitz regular $f:\R^d \to \R^d$ solves neither \eqref{eq: pushforward on compact sets} nor \eqref{eq: jac inverse pushforward on compact sets}.
\end{enumerate}

Equation \eqref{eq: jac inverse pushforward on compact sets} is newly introduced in the present paper. Inspired by the framework from \cite{dymond2018mapping}, which shows that the equation $f\#(\rho \leb) = \leb|_{f([0,1]^d)}$ admits a Lipschitz regular solution $f$ only for densities in a $\sigma-$porous subset of $C([0,1]^d,\R)$, we show that \eqref{eq: pushforward on compact sets} and \eqref{eq: jac inverse pushforward on compact sets} respectively witness the same property for a $\sigma-$porous subset of $C_b(\R^d,\R)$. This gives us the existence of a density $\rho$ (in fact, almost any $\rho$) satisfying point \ref{enumerate: bad rho}'.

The contradiction obtained after enacting the modified pushforward method is our main contribution, Theorem \ref{thm: final result 1}.
\begin{restatable}{thm}{finalresult} \label{thm: final result 1}Let $d \in \N_{\geq 2}.$ Then there exists a Delone set in $\R^d$ admitting no Lipschitz, pointwise co-Lipschitz bijection and no pointwise Lipschitz, co-Lipschitz bijection with $\Z^d$, as in Definition \ref{define: bilip with set}.
\end{restatable}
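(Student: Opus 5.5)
We will run the modified pushforward method (A')--(B') outlined above, arguing by contradiction. First fix, via the $\sigma$-porosity results announced in the introduction, a density $\rho\in C_b(\R^d,\R_{>0})$ with $0<\inf\rho\le\sup\rho<\infty$ for which no Lipschitz regular $f:\R^d\to\R^d$ solves \eqref{eq: pushforward on compact sets} and none solves \eqref{eq: jac inverse pushforward on compact sets}. Since each of the two exceptional sets is $\sigma$-porous in $C_b(\R^d,\R)$, their union is still $\sigma$-porous, hence meagre. Such a $\rho$ therefore exists, and we may even take it in the open set of functions bounded away from $0$. Next, construct a Delone set $Y$ encoding $\rho$ in the sense of Definition~\ref{defn: Y encoding rho}. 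We place points in dyadic annuli around $0$. On the annulus at scale $T_n$ we take a grid of mesh $\sim 1/T_n$ (in rescaled coordinates), and we put roughly $T_n^d\rho(x)\,\mathrm{vol}$ points into each rescaled cell, arranged with uniform separation. Choosing $T_n$ growing fast enough, and making the inner annuli consistent, gives vague convergence $\mu_n\to\rho\leb$. The uniform bounds on $\rho$ guarantee that $Y$ is Delone.

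Now suppose $g:Y\to\Z^d$ is a Lipschitz, pointwise co-Lipschitz bijection. We treat the other case (a pointwise Lipschitz, co-Lipschitz bijection) symmetrically, by working with $g^{-1}:\Z^d\to Y$, which is then Lipschitz and pointwise co-Lipschitz. By Proposition~\ref{thm: bilip}, pointwise co-Lipschitz reduces to co-Lipschitz at a single point. After translating, we may assume $g(y_0)=0$ for some $y_0$ near $0$, with $\|g^{-1}(z)-y_0\|\le L_0\|z\|$ for all $z$. Rescale: set $g_n(x)=\tfrac1{T_n}g(T_nx)$ on $\tfrac1{T_n}Y$. These maps are uniformly Lipschitz and uniformly bounded on compact sets, because $g(y_0)$ is fixed. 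By Arzel\`a--Ascoli after Lipschitz extension, a subsequence converges locally uniformly to a Lipschitz $f:\R^d\to\R^d$. We then show $f\#(\rho\leb)=\leb$. For a test function $\phi\in C_c$, we have $\int\phi\circ g_n\,d\mu_n=T_n^{-d}\sum_{z\in\Z^d}\phi(z/T_n)\to\int\phi\,d\leb$, and the left side converges to $\int\phi\circ f\,\rho\,d\leb$ by vague convergence plus uniform convergence. The crucial point is that the support of $\phi\circ g_n$ stays in a fixed compact set. This is exactly where the pointwise co-Lipschitz condition at $y_0$ enters: $\|g(y)\|\ge \|y-y_0\|/L_0$, so $g_n^{-1}(\mathrm{supp}\,\phi)$ lies in a ball of radius independent of $n$. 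Without that condition, mass could escape to infinity and the limit equation would fail. In the symmetric case the same argument applied to $h_n=\tfrac1{T_n}g^{-1}(T_n\cdot)$ yields a Lipschitz $f$ with $f\#\leb=\rho\leb$.

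The remaining step is Lipschitz regularity of $f$. For this we invoke \cite[Lemma~12.6]{david1997fractured}: a Lipschitz map pushing forward a measure comparable to Lebesgue onto a measure comparable to Lebesgue is Lipschitz regular. This applies to both \eqref{eq: pushforward on compact sets} and \eqref{eq: jac inverse pushforward on compact sets}, since $\rho$ is bounded above and below. We then have a Lipschitz regular solution, contradicting the choice of $\rho$, and so $Y$ is the required Delone set. I expect the main obstacle to be the tightness and localisation argument in the limit, namely justifying the vague convergence $\int\phi\circ g_n\,d\mu_n\to\int\phi\circ f\,\rho\,d\leb$ on all of $\R^d$ using only the single-point co-Lipschitz bound. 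A secondary obstacle is proving the $\sigma$-porosity on the noncompact space $C_b(\R^d)$, which we take as given from the preceding discussion.
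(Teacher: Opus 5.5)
Your proof is the paper's proof. There, Theorem~\ref{thm: final result 1} comes from combining two results, exactly as you do:
\begin{itemize}
\item Theorem~\ref{thm: main theorem}: encode $\rho$ in $Y$, rescale $g$, take a locally uniform limit via Kirszbraun and Arzel\`a--Ascoli, use co-Lipschitz continuity at $g^{-1}(0)$ to confine preimages of bounded sets to a fixed ball, identify the limit pushforward, and get Lipschitz regularity from \cite[Lemma~12.6]{david1997fractured};
\item the $\sigma$-porosity statement, Theorem~\ref{thm: pushforward}.
\end{itemize}
In the localisation you should also record that $f^{-1}(B(0,r))$ is bounded, so that $\phi\circ f\in C_c$ and vague convergence of $\mu_n$ applies to it. This follows from your estimate by passing to the limit, as the paper does.

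The step that does not work as written is your construction of $Y$. Vague convergence $\mu_n\to\rho\leb$ on all of $\R^d$ requires the part of $Y$ laid out according to $\rho(\cdot/T_n)$ to fill, after rescaling by $1/T_n$, regions $\{\epsilon_n\le\|x\|\le M_n\}$ with $\epsilon_n\to0$ and $M_n\to\infty$. There are two problems with your sketch:
\begin{itemize}
\item One dyadic annulus per scale, e.g.\ $\{T_n\le\|y\|<2T_n\}$, reproduces $\rho$ only on the fixed annulus $\{1\le\|x\|<2\}$. On the rest of a fixed ball, $\frac{1}{T_n}Y$ sees points distributed according to $\rho(\cdot\,T_n/T_m)$ for $m\neq n$, or whatever fills the gaps.
\item A mesh of $1/T_n$ in rescaled coordinates means unit cells in the original coordinates, each supposed to hold about $\rho$ points. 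Integer rounding then costs a relative error of order one per cell, and leaves cells empty where $\rho<1$. Without a balancing scheme, which you did not describe, $\mu_n$ does not converge to $\rho\leb$.
\end{itemize}

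The paper's Lemma~\ref{lemma: Y encodes rho} avoids both problems by taking $U_{n+1}=(2n-1)^2U_n$ and $T_n=(2n-1)U_n$.
\begin{itemize}
\item At scale $T_n$ the layer $C_{n+1}\setminus C_n$ rescales to the region between the cubes of side $1/(2n-1)$ and $2n-1$.
\item That layer is cut into cubes $R$ of side $U_n\to\infty$, each receiving $\lfloor T_n^d\int_{R/T_n}\rho\,d\leb\rfloor$ points. The total rounding error on $[-N/2,N/2]^d$ is then at most $N^dU_n^{-d}\to0$.
\item The Delone property comes from placing points at centres of a fine grid, with at least one point in each $U_1$-block.
\end{itemize}
With that lemma in place of your sketch, the rest of your argument goes through.
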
 
We emphasise that this is a stronger result than the existence of bilipschitz non-rectifiable Delone sets, from \cite{burago1998separated,mcmullen1998lipschitz}. This is shown in Chapter \ref{chapter 4} where we construct Lipschitz self-bijections of $\Z^d$ that are pointwise co-Lipschitz but not bi-Lipschitz. It is the hope of the authors that thorough exploration of such examples will provide an understanding of the class of nowhere co-Lipschitz mappings between Delone sets, in order to build a framework to answer Question \ref{question: lipschitz} fully. 

A property studied in \cite{viera2023delone} in the context of bijections between Delone sets is \emph{co-uniform continuity}: 
\begin{define}[\cite{viera2023delone}] \label{define: couniform with set}
        Let $d \in \N$ and $K,X \subseteq \R^d$. Given a strictly increasing function ${\omega:(0,\infty) \to (0,\infty)}$, we say that a bijection $f:K \to X$ is
\emph{$\omega-$co-uniformly continuous} if for every $x \in K$ and $r >0$ we have that
\begin{equation*}
    f^{-1}(B(f(x),r)) \subseteq B(x,\omega(r)).
\end{equation*}
Such a bijection $f$ is called \emph{co-uniformly continuous} if it is \emph{$\omega-$}co-uniformly continuous for some $\omega$. Additionally, we say
that a $(\omega-)$co-uniformly continuous map is of \emph{order} $O(h(r))$ if $\omega(r) \in O(h(r))$.
    \end{define}
In the current paper, we show that among Lipschitz bijections between Delone sets, the property of co-uniform continuity is equivalent to bi-Lipschitz continuity. 
A better analogue for uniform continuity of mappings between Delone sets is the notion of \emph{homogeneity}, introduced by \cite{mcmullen1998lipschitz} through the concept of H\"{o}lder homogeneity, and later generalised in \cite{dymond2023highly}:
\begin{define}[\cite{dymond2023highly}, Definition $1.1$] \label{define: co-homogeneity} Let $d\in \N,$ $K, X \subseteq \R^d$ and $\omega:[0,2) \to [0,\infty)$ a non-decreasing function with $\omega(0) = 0$. We say $f:K \to X$ is \emph{$\omega-$homogeneous} if there exists $c>0$ such that for each $R>0$
$$||f(x)-f(y)|| \leq cR\omega\left(\tfrac{||x-y||}{R}\right) \qquad \text{whenever} \; x,y \in B(0,R)\cap K.$$
We say $f$ is \emph{homogeneous} if it is $\omega-$homogeneous for some $\omega.$ Additionally, if $f$ is a bijection, we say $f$ is \emph{$\omega-$co-homogeneous} if $f^{-1}$ is $\omega-$homogeneous, $f$ is \emph{co-homogeneous} if $f^{-1}$ is homogeneous, and the prefix `bi' in place of `co' implies these properties hold for both $f$ and $f^{-1}$.
\end{define}
In the present work, we show that among Lipschitz bijections between Delone sets, the notions of co-homogeneity and pointwise co-Lipschitz continuity are equivalent. 
Therefore, we obtain an equivalent statement to Theorem \ref{thm: final result 1} in Theorem \ref{thm: corollary of PBL delone result}.

\begin{restatable}{thm}{cohom} \label{thm: corollary of PBL delone result}
    Let $d \in \N_{\geq 2}.$ Then there exists a Delone set in $\R^d$ admitting no Lipschitz, co-homogeneous bijection and no co-Lipschitz, homogeneous bijection with $\Z^d,$ as in Definition \ref{define: co-homogeneity}.
\end{restatable}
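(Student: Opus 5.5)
Theorem~\ref{thm: corollary of PBL delone result} will follow from Theorem~\ref{thm: final result 1}, using the equivalence announced just before the statement. Among Lipschitz bijections between Delone sets, co-homogeneity is equivalent to pointwise co-Lipschitz continuity. Dually, among co-Lipschitz bijections, homogeneity is equivalent to pointwise Lipschitz continuity; this is the same statement applied to $f^{-1}$.

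Given this equivalence, take $Y$ to be the Delone set of Theorem~\ref{thm: final result 1}. Suppose $f$ is a Lipschitz, co-homogeneous bijection between $Y$ and $\Z^d$ (in either direction). Then $f$ is Lipschitz and pointwise co-Lipschitz, which contradicts Theorem~\ref{thm: final result 1}. The other case is symmetric: a co-Lipschitz, homogeneous bijection is a pointwise Lipschitz, co-Lipschitz bijection, again a contradiction. So the substance of the proof is the equivalence lemma, which I would prove as follows. Let $f:U\to V$ be a Lipschitz bijection between Delone sets, with $U$ being $r$-separated and $R$-dense, and similarly for $V$.

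\emph{Co-homogeneous $\Rightarrow$ pointwise co-Lipschitz.} Let $g=f^{-1}$ be $\omega$-homogeneous with constant $c$, and fix $y_0=f(a)$. For $y\in V$, choose the radius $R\approx \|y\|+\|y_0\|+1$, so that $y,y_0\in B(0,R)$. Homogeneity gives
\[
\|g(y)-g(y_0)\|\le cR\,\omega(2)\lesssim \|y\|+\|y_0\|+1 .
\]
Since $V$ is $r$-separated, $\|y-y_0\|\ge r$ whenever $y\neq y_0$. Also $\|y\|\le \|y-y_0\|+\|y_0\|$. Combining these, $\|y\|+\|y_0\|+1\le C(y_0)\|y-y_0\|$. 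Hence $g$ is Lipschitz at $y_0$, and Proposition~\ref{thm: bilip} (or simply repeating this at every point) gives that $f$ is pointwise co-Lipschitz. Note that this direction does not use the Lipschitz property of $f$.

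\emph{Pointwise co-Lipschitz $\Rightarrow$ co-homogeneous.} By Proposition~\ref{thm: bilip}, $g$ is Lipschitz at a single point, say $y_0$, with constant $L_0$. Define $\omega(t)=\min\{Kt,\,K\}$ for a large constant $K$; any $\omega$ dominated by a bounded function and defined on $[0,2)$ would do. It then suffices to check $\|g(x)-g(y)\|\le c'R$ for $x,y\in B(0,R)\cap V$, with $c'$ independent of $R$. The triangle inequality through $y_0$ gives
\[
\|g(x)-g(y)\|\le L_0(\|x-y_0\|+\|y-y_0\|)\le 2L_0(R+\|y_0\|).
\]
This is $\le c'R$ for $R\ge 1$. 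For $R<1$, the ball $B(0,R)$ contains at most one point of $V$ once $R$ is small relative to the separation $r$. For the finitely many configurations with $R$ of order the separation scale, a constant suffices.

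The main obstacle is squaring the boundedness of $\omega$ with the small-scale requirement $\omega(t)\to0$. Since $x\ne y$ forces $\|x-y\|/R\ge r/R$, we have $\omega(\|x-y\|/R)\ge \omega(r/R)$, which is bounded below once $R\lesssim 1$. So taking $\omega(t)=Kt$ on $[0,2)$ should work: on small balls, distinct points satisfy $\|x-y\|\ge r$, and $g$ is globally coarse-Lipschitz there by the $L_0$ bound. I would verify this case carefully.
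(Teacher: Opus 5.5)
Your deduction of the statement is correct and follows the paper's route. You combine Theorem~\ref{thm: final result 1} with the implication ``homogeneous on a uniformly discrete set $\Rightarrow$ Lipschitz at every point'', which is the first half of Theorem~\ref{thm: co-hom equivalent to pointwise BL}. You apply it to $f^{-1}$ when $f$ is Lipschitz and co-homogeneous, and to $f$ itself when $f$ is co-Lipschitz and homogeneous. Your proof of that implication is essentially the paper's. One small fix: bound by $\omega(1)$ instead of the undefined $\omega(2)$, since $\|y-y_0\|<R$ for your choice of $R$.

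Only this direction is needed here. The converse matters only for the reverse implication Theorem~\ref{thm: corollary of PBL delone result} $\Rightarrow$ Theorem~\ref{thm: final result 1}. That is fortunate, because your sketch of the converse is wrong.
\begin{itemize}
\item With $\omega(t)=\min\{Kt,K\}$, the requirement is $\|g(x)-g(y)\|\le cK\min\{\|x-y\|,R\}$, not $\|g(x)-g(y)\|\le c'R$.
\item With $\omega(t)=Kt$, homogeneity is exactly global Lipschitz continuity of $g$, since every pair lies in some $B(0,R)$.
\end{itemize}
Being Lipschitz at one point gives neither. Take $g^{(d)}$ from Lemma~\ref{lemma: bilipschitz properties} with $N_i=2^i$, which is pointwise co-Lipschitz by point 3 of Example~\ref{example: main}. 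Its inverse sends $(s_i+1,0,\dots,0)$ and $(s_i-2,0,\dots,0)$, which are at distance $3$, to points at distance $2N_i+1\to\infty$.

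The ``obstacle'' you describe comes from misreading Definition~\ref{define: co-homogeneity}. It asks only that $\omega$ be non-decreasing with $\omega(0)=0$, not that $\omega(t)\to 0$ as $t\to0^+$. Indeed, take $R=s_i+2$ with the same pair: any $(\omega,c)$ witnessing co-homogeneity of this example satisfies $\omega\bigl(3/(s_i+2)\bigr)\ge (2N_i+1)/\bigl(c(s_i+2)\bigr)\to 2/(3c)$. So with the extra requirement you imposed, the equivalence would actually be false.

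The correct choice is an $\omega$ that jumps at $0$, e.g.\ $\omega(0)=0$ and $\omega(t)=1$ for $t>0$; the paper uses a supremum-defined $\omega\le 2L$. With this choice, reducing to $\|g(x)-g(y)\|\le c'R$ is legitimate. Your triangle-inequality bound $2L_0(R+\|y_0\|)$ then finishes the argument without any case analysis: two distinct points of an $r$-separated set in $B(0,R)$ force $R>r/2$, so $2L_0(R+\|y_0\|)\le 2L_0(1+2\|y_0\|/r)R$.
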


Among Lipschitz bijections between Delone sets, aided by the equivalences above and examples from the final section of the current paper, we confirm that there are three strictly nested classes of mappings that can be represented in a variety of ways. This is summarised by the implication diagram in Theorem \ref{thm:summary}.
\begin{restatable}{thm}{summary} \label{thm:summary}
    Let $X,Y\subset \R^{d}$ be Delone sets and $f: X\to Y$ be a bijection. Then
\begin{equation} \label{eq: thm summary}
    \begin{split}
        &{\text{$f$ is Lipschitz and} \atop \text{co-uniformly continuous}} \qquad  \;\; \text{$f$ is Lipschitz and co-homogeneous} \\
        &  \;\;\qquad\qquad\Updownarrow \qquad\qquad\qquad\qquad \qquad \qquad\qquad \;\;\;\Updownarrow \\
&\qquad\text{$f$ is bi-Lipschitz}\;\;{\substack{\Longrightarrow\\ \mathrel{\rlap{\hskip .5em/}}\Longleftarrow}}\;\; \;\;\text{$f$ is Lipschitz and pointwise co-Lipschitz} \;\; {\substack{\Longrightarrow\\ \mathrel{\rlap{\hskip .5em/}}\Longleftarrow}} \;\;\text{$f$ is Lipschitz}.\\
    &\qquad\qquad\qquad \qquad\qquad\qquad\qquad\qquad \qquad \qquad\Updownarrow\\
    & \qquad\qquad\qquad \qquad \qquad \qquad \qquad{{\text{$f$ is Lipschitz and $\exists$ $y \in Y$} \atop \text{s.t $f$ is co-Lipschitz at $y$}}}
    \end{split}
\end{equation}
 \end{restatable}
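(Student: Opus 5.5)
The plan is to treat the diagram as six separate claims and prove them one at a time. Three claims are trivial implications, three are equivalences, and the two non-implications need explicit examples. Throughout, $f:X\to Y$ is a bijection between Delone sets. Write $r_X,R_X$ (resp.\ $r_Y,R_Y$) for the separation and covering constants, and $L$ for the Lipschitz constant of $f$.

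\emph{Trivial parts and the bottom equivalence.} Bi-Lipschitz implies Lipschitz and pointwise co-Lipschitz, with a uniform constant. That in turn implies Lipschitz. The bottom vertical equivalence is exactly Proposition~\ref{thm: bilip} applied with $U=X$, $V=Y$: being co-Lipschitz at one point $y=f(a)$ is equivalent to being pointwise co-Lipschitz. If I had to reprove it, the key estimate is
\[
\|f^{-1}(z)-f^{-1}(w)\| \le \|f^{-1}(z)-f^{-1}(y)\|+\|f^{-1}(y)-f^{-1}(w)\|.
\]
Fix $w$. For $z$ far from $w$ (say $\|z-w\|\ge \|y-w\|$), both terms are at most $C(\|z-y\|+\|y-w\|)\le C'\|z-w\|$. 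For $z$ near $w$, $z\neq w$, uniform discreteness gives $\|z-w\|\ge r_Y$, and there are only finitely many such $z$. This part is routine.

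\emph{Left equivalence: Lipschitz and co-uniformly continuous iff bi-Lipschitz.} The direction $\Leftarrow$ is immediate with $\omega(r)=Lr$. For $\Rightarrow$, let $x'\neq x$ and set $r=\|f(x)-f(x')\|$, so $r\ge r_Y$ by separation of $Y$. I want $\|x-x'\|\le Cr$, i.e.\ that $\omega$ can be taken linear at large scales. Connect $f(x)$ to $f(x')$ by a chain of points $y_0=f(x),\dots,y_k=f(x')$ of $Y$. Choose consecutive distances at most $2R_Y+1$ and $k\lesssim r/R_Y+1$; such a chain exists because $Y$ is $R_Y$-dense. Each step satisfies $\|f^{-1}(y_i)-f^{-1}(y_{i+1})\|<\omega(2R_Y+1)$ by co-uniform continuity. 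Summing gives $\|x-x'\|\le k\,\omega(2R_Y+1)\lesssim r$ for $r\ge r_Y$. So co-uniform continuity alone already gives co-Lipschitz on a Delone set, and the Lipschitz hypothesis is not needed there.

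\emph{Right equivalence: Lipschitz and co-homogeneous iff Lipschitz and pointwise co-Lipschitz.} For $\Rightarrow$, I take $y\in Y$ and $R$ large enough that $y,w$ and their preimages lie in $B(0,R)$. Homogeneity of $f^{-1}$ gives $\|f^{-1}(w)-f^{-1}(y)\|\le cR\,\omega(\|w-y\|/R)$. To get co-Lipschitz at a single point, and then everywhere via Proposition~\ref{thm: bilip}, I would choose $R\approx \|w\|+\|y\|+1$ and use $\omega\le\omega(2)$ to bound the quotient by a constant times $(\|w\|+1)/\|w-y\|$. This is bounded once $w$ is far from $y$, and the remaining $w$ are finitely many. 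For $\Leftarrow$, pointwise co-Lipschitz at $0$ (or at the point of $Y$ nearest to $0$) gives $\|f^{-1}(w)\|\le C(\|w\|+1)$. Hence for $z,w\in B(0,R)$,
\[
\|f^{-1}(z)-f^{-1}(w)\|\le 2C(R+1)\le 4CR\,\omega(\|z-w\|/R)
\]
for an $\omega$ that is bounded below by a positive constant. The catch is that $\omega(0)=0$ is required, so small ratios $\|z-w\|/R$ need care. Since $\|z-w\|\ge r_Y$, I would define $\omega(t)$ piecewise to handle ratios below $r_Y/R$; but $R$ varies, so a single $\omega$ must work at all scales. I expect this to be the main obstacle. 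I would try setting
\[
\omega(t)=\sup_{R\ge 1}\ \sup\Bigl\{\tfrac{\|f^{-1}(z)-f^{-1}(w)\|}{R} : z,w\in B(0,R)\cap Y,\ \|z-w\|\le tR\Bigr\},
\]
and then show $\omega(t)\to0$ as $t\to0$. This should follow from a pointwise-to-local Lipschitz bound, $\|f^{-1}(z)-f^{-1}(w)\|\le C(1+\|z-w\|)$, which I would get from the chain argument above together with finiteness near each point.

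\emph{Strictness of the two nested inclusions.} Lipschitz with pointwise co-Lipschitz does not imply bi-Lipschitz: construct a Lipschitz self-bijection of $\Z^d$ that swaps pairs of points at distance $1$ while moving, at growing distances $n$, points that are $n$ apart to points that are roughly $\sqrt n$ apart. This is the construction deferred to Chapter~\ref{chapter 4}. Lipschitz does not imply pointwise co-Lipschitz: construct a Lipschitz bijection of $\Z^d$ (or of $\Z$) that sends far-away points near $0$, for example a shift-type compression. The analogue of Hilbert's hotel on $\Z$ is a $1$-Lipschitz bijection whose inverse fails to be Lipschitz at $0$.
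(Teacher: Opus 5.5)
Your bottom and left equivalences are fine. Your chain argument uses the ball form of co-uniform continuity directly, so it even bypasses Proposition~\ref{prop: iff holder couniformity}. The direction ``Lipschitz and pointwise co-Lipschitz $\Rightarrow$ co-homogeneous'' goes wrong at the point you call the main obstacle, because there is no obstacle. Definition~\ref{define: co-homogeneity} only requires $\omega$ to be non-decreasing with $\omega(0)=0$; it does not require continuity at $0$. Distinct $z,w\in Y$ give $\|z-w\|/R>0$, so the function with $\omega(0)=0$ and $\omega\equiv 1$ on $(0,2)$ already works with your bound $\|f^{-1}(z)-f^{-1}(w)\|\le 2C(R+1)$. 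The only extra remark needed is that two distinct points of $Y$ lie in $B(0,R)$ only when $R$ exceeds half the separation of $Y$, so $R+1\lesssim R$. This is exactly the paper's argument in Theorem~\ref{thm: co-hom equivalent to pointwise BL}, where the supremum-defined $\omega$ is only shown to be bounded by $2L$. The repair you propose instead is false. A bound $\|f^{-1}(z)-f^{-1}(w)\|\le C(1+\|z-w\|)$ with uniform $C$ becomes, by the separation of $Y$, a global Lipschitz bound on $f^{-1}$. That would make $f$ bi-Lipschitz, contradicting the strict inclusion you assert next. Concretely, $\omega(t)\to 0$ fails for the paper's map with $N_i=2^i$. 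There $g^{-1}(s_{i+1}+1)=s_{i+1}+2N_{i+1}+1$ and $g^{-1}(s_{i+1}-2)=s_{i+1}$. So points $3$ apart, at scale $R\approx s_{i+1}\approx 3N_{i+1}$, have preimages about $2N_{i+1}$ apart, which forces $\omega(3/R)$ to stay bounded away from $0$ as $R\to\infty$.

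The two non-implications are not actually established, and one of your examples cannot exist. If $g$ is a $1$-Lipschitz bijection of $\Z$, injectivity gives $g(n+1)-g(n)=\pm1$. The sign cannot change, since otherwise $g(n+2)=g(n)$. Hence $g$ is an isometry, and no `Hilbert hotel' bijection of $\Z$ of the kind you describe exists. Your other example, sending pairs $n$ apart to pairs about $\sqrt n$ apart, is not a defined bijection, and it is not the construction of Chapter~\ref{chapter 4}. The missing idea is a way to scramble long blocks while keeping the Lipschitz constant bounded. The paper applies the `stretch and $3$-fold' map $h^{(N_i)}$ on consecutive blocks $s_i+[3N_i]$. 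This gives a $5$-Lipschitz bijection of $\Z$ that preserves each block, and it is lifted to $\Z^d$ by acting on the first coordinate. Block preservation gives $|g(x)|/|x|\ge (s_i+1)/(s_i+3N_i)$, so $g$ is co-Lipschitz at $0$ if and only if $N_{i+1}/\sum_{j\le i}N_j$ is bounded. The pair above shows that $g$ is not bi-Lipschitz when $(N_i)$ is unbounded. The choices $N_i=2^i$, and $N_1=1$ with $N_i=2^i\sum_{j<i}3N_j$, then realise the two strict non-implications.
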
   

We highlight that the notion of co-homogeneity has the merit that two functions can be compared by the growth rate of the real-valued functions $\omega$ and $\tilde{\omega}$ for which one is $\omega-$co-homogeneous and the other is $\tilde{\omega}-$co-homogeneous. In the final section, for each non-decreasing and concave function $\omega$, we construct a Lipschitz self-bijection of $\Z^d$ which is $\omega-$co-homogeneous but is not $\tilde{\omega}-$co-homogeneous for any slower growing function $\tilde{\omega}$.

A theorem of McMullen \cite[Theorem $5.1(3)$]{mcmullen1998lipschitz} states that every Delone set in $\R^d$ admits a bi-H\"{o}lder homogeneous bijection with $\Z^d,$ that is, a bijection which is $\omega-$homogeneous and $\omega-$co-homogeneous with $\omega(t) = t^\alpha$, where $\alpha \in (0,1)$.
 By Theorem~\ref{thm: corollary of PBL delone result}, there exists a Delone set $Y \subset \R^d$ such that the bijection from the conclusion of McMullen's theorem is bi-H\"{o}lder homogeneous, but is not $1-$H\"{o}lder homogeneous (i.e., Lipschitz) and is not $1-$co-H\"{o}lder homogeneous (i.e., co-Lipschitz). Since we cannot replace the bi-H\"{o}lder homogeneous condition with a Lipschitz, co-H\"{o}lder homogeneous or a H\"{o}lder homogeneous and co-Lipschitz condition, it is impossible to improve McMullen's result by pushing one of the directions to Lipschitz.  In fact, Theorem~\ref{thm: corollary of PBL delone result} says more: for some Delone sets, if we wish to find a Lipschitz bijection to or from the integer lattice, then it can't have \emph{any} level of co-homogeneity, let alone co-H\"{o}lder homogeneity.
 
\subsection{Structure of the paper}
The key result of Chapter \ref{chapter: viera} is Corollary \ref{cor: couni iff bilip}, which clarifies that the class of Lipschitz co-uniform mappings between two Delone sets, from \cite{viera2023delone}, coincides with the class of bilipschitz mappings. Additionally, we prove Theorem~\ref{thm: corollary of PBL delone result}, showing that co-homogeneity and pointwise co-Lipschitz continuity are equivalent notions among Lipschitz bijections between Delone sets.

The body of the paper focuses on the construction of a Delone set admitting no Lipschitz, pointwise co-Lipschitz bijection with the integer lattice, for which we run the modified pushforward method (\ref{enumerate: assumption}').
Chapter~\ref{chapter: 1.10} is dedicated to the proof of Theorem~\ref{thm: main theorem}, which corresponds to point \ref{enumerate: pushforward restriction}' in the modified pushforward method.
\begin{restatable}{thm}{maintheorem} \label{thm: main theorem}
    Let $d \in \N$. Suppose every Delone set in $\R^d$ admits either a Lipschitz, pointwise co-Lipschitz bijection or a co-Lipschitz, pointwise Lipschitz bijection with $\Z^d$. Then for every measurable density $\rho:\R^d \to \R$ with $0< \inf \rho \leq \sup \rho < \infty,$ there exists a Lipschitz regular map $f:\R^d \to \R^d$ satisfying either 
    \begin{equation*} \label{eq: pushforward equation}
        f\#(\rho\mathcal{L}) = \mathcal{L} \qquad \text{or} \qquad  f\#\leb = \rho\leb. \end{equation*}
\end{restatable}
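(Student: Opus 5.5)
Given a density $\rho$ with $0<\inf\rho\le\sup\rho<\infty$, I first reduce to a continuous one: I will construct the Delone set from a continuous bounded density and handle general measurable $\rho$ by approximation at the end. I then build a Delone set $Y\subset\R^d$ that encodes $\rho$ in the sense of Definition~\ref{defn: Y encoding rho}. Take $T_n\to\infty$ fast and disjoint annular shells $A_n=B(0,R_{n+1})\setminus B(0,R_n)$. In $A_n$, place points so that $\tfrac1{T_n}(Y\cap A_n)$ approximates $\rho$ on $B(0,n)$ at scale $1/T_n$. Concretely, tile by cubes of side $T_n^{-1}\ell_n$ and put $\approx \rho\cdot \ell_n^d$ points in each rescaled cube, with $\ell_n\to\infty$ slowly. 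Fill the gaps with a copy of $\Z^d$ so that $Y$ stays uniformly discrete and relatively dense. Since $\rho$ is bounded above and below, the separation and covering constants are uniform in $n$. After recentring each shell block at $0$, the measures $\mu_n$ converge vaguely to $\rho\leb$. To get the definition literally, I will instead arrange the blocks so that the $n$-th approximation sits in the cube $T_n[-n,n]^d$ itself, with $T_{n+1}\gg T_n n$, so earlier blocks occupy a region that is negligible after rescaling by $T_{n+1}$.

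By hypothesis there is a bijection $g:Y\to\Z^d$ that is either Lipschitz and pointwise co-Lipschitz, or co-Lipschitz and pointwise Lipschitz. Passing to a subsequence, one case occurs; say the first, and the second is symmetric with $g^{-1}$ swapping the roles and yielding $f\#\leb=\rho\leb$. By Proposition~\ref{thm: bilip}, $g^{-1}$ is Lipschitz at a point, so there is $L$ with $\|g^{-1}(z)-g^{-1}(z_0)\|\le L\|z-z_0\|$ for all $z$. Translate so that $g(y_0)=0$ for a fixed $y_0$. Define $f_n(x)=\tfrac1{T_n}g(T_nx)$ on $\tfrac1{T_n}Y$. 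These maps are uniformly Lipschitz, and $f_n(0)$ is bounded, so Arzel\`a--Ascoli (after Lipschitz extension, McShane componentwise) gives a subsequence converging locally uniformly to a Lipschitz $f:\R^d\to\R^d$.

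The pointwise co-Lipschitz bound at the basepoint is the ingredient that prevents mass escaping to infinity. The preimage under $f_n$ of $B(0,r)$ lies in $B(0,Lr+o(1))$, so lattice points in $B(0,r)$ come from a fixed compact set. Because $g$ is a bijection, $f_n\#\mu_n$ equals the normalized counting measure of $\tfrac1{T_n}\Z^d$, which converges vaguely to $\leb$. The tightness just described lets me pass to the limit: $f_n\#\mu_n\to f\#(\rho\leb)$ vaguely, using uniform convergence on compacts and $\mu_n\to\rho\leb$. Hence $f\#(\rho\leb)=\leb$. Lipschitz regularity of $f$ then follows from \cite[Lemma~12.6]{david1997fractured}, since $f$ is Lipschitz and pushes a measure comparable to $\leb$ onto $\leb$. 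Finally, for a general measurable $\rho$ I approximate by continuous $\rho_k$ with uniform bounds, converging in $L^1_{loc}$. I obtain solutions $f_k$ with uniform Lipschitz constant and uniform co-Lipschitz-at-$0$ control, and extract a limit that solves the equation for $\rho$.

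I expect the main obstacle to be this limiting step. Vague convergence of pushforwards needs control of mass escaping to infinity in both directions. On the domain side this is the point-co-Lipschitz bound. Each $f_n$ is only co-Lipschitz at $0$ with constant $L$, and the same $L$ must persist uniformly, both across $n$ and in the approximation over $k$. I would try first to secure this by fixing the basepoint once in $Y$, and by producing the $f_k$ through a single Delone set that encodes all $\rho_k$ along different subsequences.
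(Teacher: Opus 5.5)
Your core argument is the paper's. You encode $\rho$ in a Delone set via nested blocks whose inner part becomes negligible after rescaling (the paper's cubes $C_n$, with $\tfrac1{T_n}C_n$ of side $\tfrac1{2n-1}$). You blow down $g$ to $f_n(x)=\tfrac1{T_n}\bar g(T_nx)$ and extract a locally uniform limit. You use the co-Lipschitz bound at the basepoint to confine preimages of balls, which is exactly condition \eqref{eq: lemma condition 2} of Lemma~\ref{lemma: pushforward measures vaguely converging}. You identify $f_n\#\mu_n$ with the rescaled lattice counting measure, and you finish with uniqueness of vague limits and \cite[Lemma~12.6]{david1997fractured}.

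When you write it out, check the preimage bound for the extended maps and for the limit $f$, not only on $\tfrac1{T_n}Y$. You need $f^{-1}(B(0,r))$ bounded so that $\Psi\circ f\in C_c$ and $f\#(\rho\leb)$ is locally finite. The paper gets this from \eqref{eq: preimage overline g}--\eqref{eq: preimage fn} together with a pointwise limit argument.

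The step that fails as written is the reduction to continuous $\rho$ and the final approximation. Suppose you build a separate Delone set $Y_k$ for each continuous $\rho_k$. The hypothesis then gives separate bijections $g_k$, and nothing relates their Lipschitz constants or their co-Lipschitz-at-a-point constants. So your claim that the $f_k$ have a uniform Lipschitz constant and uniform preimage control is unjustified. Without it you can neither apply Arzel\`a--Ascoli to $(f_k)$ nor stop mass escaping to infinity in the limit.

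Your suggested repair does work: encode all $\rho_k$ in one Delone set along different subsequences, so that a single $g$ supplies one pair of constants. But it makes the approximation pointless, because the encoding construction never uses continuity. It only uses the cube integrals: put $\lfloor T_n^d\int_{\frac1{T_n}Q}\rho\,d\leb\rfloor$ points in the cube $Q$, as in Lemma~\ref{lemma: Y encodes rho}. The errors in $\int\Psi\,d\mu_n$ are controlled by the oscillation of $\Psi$ over cubes of vanishing rescaled side, the rounding, and the vanishing inner block, not by any regularity of $\rho$. So encode the measurable $\rho$ directly, as the paper does, and delete the last step.
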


In Chapter \ref{chapter: 1.11}, we prove Theorem~\ref{thm: pushforward} using ideas from \cite{dymond2018mapping} that show the existence of bounded densities on $[0,1]^d$ admitting no Lipschitz regular solution to (\ref{eq: pushforward on compact sets}, $\mathcal{K} = [0,1]^d$) (so-called \emph{non-realisable densities}). This completes point \ref{enumerate: bad rho}' in the modified pushforward method. In other words, Theorem \ref{thm: pushforward} provides a counterexample to the conclusion of Theorem \ref{thm: main theorem}, so as a corollary we get Theorem \ref{thm: final result 1}, the main result of the current paper. In the spirit of \cite[Theorem~$4.1$]{dymond2018mapping}, we show that, in the sense of porosity, almost any density $\rho \in C_b(\R^d,\R)$ fails to admit a Lipschitz regular solution $f:\R^d \to \R^d$ to either of the pushforward equations from Theorem \ref{eq: pushforward equation}.
\begin{restatable}{thm}{pushforwardthm} \label{thm: pushforward}
    Let $d \in \N_{\geq 2}.$ Then the set of all $\rho\in C_b(\R^d,\R)$ admitting a Lipschitz regular function $f: \R^d \to \R^d$ which solves either of the equations $$f\#(\rho\leb) = \leb \qquad \text{or} \qquad f\#\leb = \rho\leb$$ is a $\sigma-$porous subset of $(C_b(\R^d,\R), ||\cdot||_{\infty}).$
\end{restatable}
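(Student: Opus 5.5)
We will reduce Theorem~\ref{thm: pushforward} to the compact-domain result of \cite[Theorem~4.1]{dymond2018mapping}, following its porosity scheme closely. For $L\in\N$, let $\mathcal{A}_L$ be the set of $\rho\in C_b(\R^d,\R)$ admitting an $L$-Lipschitz, $L$-Lipschitz regular $f:\R^d\to\R^d$ with $f\#(\rho\leb)=\leb$. Define $\mathcal{B}_L$ in the same way for the equation $f\#\leb=\rho\leb$. Since every Lipschitz regular map has some finite Lipschitz and regularity constants, the exceptional set is $\bigcup_L(\mathcal{A}_L\cup\mathcal{B}_L)$. It therefore suffices to show that each $\mathcal{A}_L$ and each $\mathcal{B}_L$ is porous in $(C_b(\R^d,\R),\|\cdot\|_\infty)$, with a porosity constant that may depend on $L$.

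\textbf{Restriction to a unit cube.} If $f$ solves $f\#(\rho\leb)=\leb$ on $\R^d$, then its restriction to $Q=[0,1]^d$ satisfies $f\#(\rho\leb|_Q)\le \leb|_{f(Q)}$. Lipschitz regularity bounds the multiplicity of $f$ by a constant $N(L,d)$. From this we get $\leb|_{f(Q)}\le f\#(\rho\leb|_{Q'})$ for a slightly enlarged cube $Q'$, together with a comparison in the reverse direction. The restriction therefore satisfies the same local constraints as the maps used in \cite{dymond2018mapping}: bounded multiplicity, and images of subcubes whose measures are prescribed by integrals of $\rho$. For $\mathcal{B}_L$ the roles swap. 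Here $f\#\leb=\rho\leb$ says that $\rho(E)$ equals the Lebesgue measure of $f^{-1}(E)$. Decomposing $f$ near a point into finitely many bi-Lipschitz pieces, as in Definition~\ref{define: lip reg}, shows that locally $\rho$ is a sum of at most $N$ terms of the form $|\mathrm{Jac}(g^{-1})|$ for bi-Lipschitz maps $g$. This is again the kind of structure that the checkerboard obstruction rules out.

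\textbf{Porosity step.} Fix $\rho\in\mathcal{A}_L$ and $\varepsilon>0$. Since $\rho$ is bounded and continuous, we can pick a small cube $Q$ on which $\rho$ varies by less than $\varepsilon/10$. Inside $Q$ we add a checkerboard perturbation $\psi$ with $\|\psi\|_\infty\le\varepsilon$, built at a scale and contrast chosen depending on $L$, as in \cite[Lemma~2.1]{burago1998separated} and \cite[Section~4]{dymond2018mapping}. This gives $\tilde\rho=\rho+\psi$. We then show that the ball $B(\tilde\rho,c(L)\varepsilon)$ misses $\mathcal{A}_L$. Suppose some $\sigma$ in this ball were realised by an $L$-map. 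Rescaling $Q$ to the unit cube preserves the Lipschitz and regularity constants. The quantitative lemma of \cite{dymond2018mapping} then says that an $L$-Lipschitz regular map cannot push a density of this checkerboard shape to (a multiple of) Lebesgue measure, because the images of adjacent columns would need lengths differing by a fixed factor while the map stays $L$-Lipschitz. This is the contradiction. For $\mathcal{B}_L$ we run the same argument with the inverse pieces $g^{-1}$, which are bi-Lipschitz with constants controlled by $L$. Finally, because the $\mathcal{A}_L$ increase with $L$, the set $\bigcup_L(\mathcal{A}_L\cup\mathcal{B}_L)$ is a countable union of porous sets, hence $\sigma$-porous.

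\textbf{Main obstacle.} I expect the hardest part to be the passage from the global equation on $\R^d$ to a usable local statement on a cube. For a non-injective Lipschitz regular $f$, preimages of $f(Q)$ may come from far outside $Q$. The plan is to control this using the bounded multiplicity and the local bi-Lipschitz decomposition, so that the density of the image on the target is at least a fixed fraction of what it would be for an injective map. If that comparison only holds up to the constant $N$, I would instead choose the checkerboard contrast larger than $N$ times the Lipschitz-distortion threshold. Then even the $N$-fold overlaps cannot make the densities compatible.
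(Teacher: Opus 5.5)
There is a genuine gap, and it sits at the step you flag as the main obstacle; the fix you sketch cannot work. Your pieces $\mathcal{A}_L,\mathcal{B}_L$ carry no information about \emph{where} a competing map is well behaved. You also fix the cube $Q$ from $\rho$ alone, before the competitor $\sigma\in B(\tilde\rho,c(L)\varepsilon)$ and its map $f$ are known.

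The comparison $\mathcal{L}|_{f(Q)}\le f\#(\rho\mathcal{L}|_{Q'})$ for a slightly enlarged cube $Q'$ is false. Take $f(x)=(\phi(x_1),x_2,\dots,x_d)$, where $\phi$ is a $Z$-shaped fold of $\R$ equal to $t$, $2K-t$, $t-2K$ on $(-\infty,K]$, $[K,2K]$, $[2K,\infty)$, and put $Q$ in the slab $0<x_1<K$. Two further sheets over $f(Q)$ then lie at distance of order $K$ from $Q$. So $\mathcal{L}|_{f(Q)}$ has density $1$, while $f\#(\rho\mathcal{L}|_{Q'})$ has density $\rho$, which can be close to $\frac13$ there. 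Hence $f|_Q$ does not solve any compact-domain equation to which \cite{dymond2018mapping} applies, rescaled or not.

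Your fallback — a comparison only up to the multiplicity $N$, plus a checkerboard contrast exceeding $N$ times a threshold — is impossible. Since $\|\tilde\rho-\rho\|_\infty<\varepsilon$ and $\rho$ is nearly constant on $Q$, the contrast of $\tilde\rho$ on $Q$ is $1+O(\varepsilon)$ wherever $\rho$ is bounded away from $0$. Moreover, a relation holding only up to a fixed multiplicative factor obstructs nothing: any density with values in $[1,1+\eta]$ is within a factor $1+\eta$ of $|\mathrm{Jac}(\mathrm{id})|=1$. The checkerboard obstruction is visible only through an exact identity.

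The same issue affects $\mathcal{B}_L$. Definition~\ref{define: lip reg} gives no bi-Lipschitz decomposition by itself. \cite[Proposition~2.15]{dymond2018mapping} gives one only over \emph{some} open set depending on $f$, with no control on its position or size relative to your $Q$.

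The missing idea is to build the location of the decomposition into the countable index. \cite[Proposition~2.15]{dymond2018mapping} gives, for Lipschitz regular $f$, an open $V$ and pairwise disjoint open sheets $Y_1,\dots,Y_N$ with $N\le\operatorname{Reg}(f)$ and $f^{-1}(V)=\bigcup_iY_i$. Each $f_i=f|_{Y_i}$ is bi-Lipschitz onto $V$, with constants controlled by $\operatorname{Reg}(f)$ and $\operatorname{Lip}(f)$. The paper indexes the porous pieces by these constants and by a basis set:
\begin{itemize}
\item for $f\#\mathcal{L}=\rho\mathcal{L}$ it requires $O_n\subseteq V$, so that $\rho=\sum_i|\mathrm{Jac}(f_i^{-1})|$ a.e.\ on $O_n$;
\item for $f\#(\rho\mathcal{L})=\mathcal{L}$ it requires $Y_1=O_{M,n}\subseteq[-M,M]^d$, so that $\rho=|\mathrm{Jac}(f_1)|-\sum_{i\ge2}(\rho\circ f_i^{-1}\circ f_1)|\mathrm{Jac}(f_i^{-1}\circ f_1)|$ a.e.\ on $O_{M,n}$.
\end{itemize}
Within one piece, every competitor obeys its identity on the same known set. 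So the perturbation $\psi$ of \cite[Lemma~4.6]{dymond2018mapping} can be placed inside that set in advance. A pair of adjacent cubes on which all the Jacobian averages nearly agree, while the averages of $\psi$ differ by $\varepsilon$, then gives the contradiction.

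In the second identity the cross terms do not interfere, because $Y_i\cap Y_1=\emptyset$. They sample the competitor only outside $\operatorname{supp}\psi$, where it is uniformly close to the unperturbed, uniformly continuous function. That case is proved on $[-M,M]^d$ via \cite[Lemma~4.7]{dymond2018mapping} and lifted to $C_b(\R^d,\R)$ by Lemma~\ref{lemma: existence of bad rho}. Without this indexing, your argument would need a quantitative decomposition, independent of $f$, over a prescribed region, with the other sheets avoiding $\operatorname{supp}\psi$. The proposal does not supply this.
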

In the first section of Chapter \ref{chapter 4} we construct explicit examples of Lipschitz self-bijections of $\Z^d$ which are either nowhere co-Lipschitz or both pointwise co-Lipschitz and non-bi-Lipschitz. The existence of such examples show that the two reverse non-implications in \eqref{eq: thm summary} of Theorem \ref{thm:summary} hold.
Hence, Theorem \ref{thm: final result 1} is on one hand stronger result than the classical result of Burago and Kleiner \cite[Theorem~$1.1$]{burago1998separated}, but on the other hand does not fully answer Question \ref{question: lipschitz}. 
We finish this section by further studying what level of co-homogeneity each of the Lipschitz self-bijections of $\Z^d$ witness.

\subsection{Notation and preliminaries}
\paragraph{Sets and measures}
We use $||\cdot||$ and $||\cdot||_\infty$ to denote the Euclidean norm and the supremum norm respectively. In a normed space $(M,||\cdot||_M),$ if $r>0$ and $x \in K,$ we write $B_{(M,||\cdot||_M)}(x,r)$ to denote the open ball, and $\bar{B}_{(M,||\cdot||_M)}(x,r)$ to denote the closed ball, of radius $r$ centred at $x$ with respect to the norm $||\cdot||_M$. If $M$ is not specified, it is implied that $M=\R^d$, and if $||\cdot||_M$ is not specified, it represents the implied norm on $M$.

We refer to the set of all continuous functions from $M$ to $\R$ as $C(M,\R)$, and its subsets of bounded functions and functions with compact support are denoted by $C_b(M, \R)$ and $C_c(M,\R)$ respectively. We refer to the support of a function $f:M \to \R$ as $\text{supp}(f),$ that is
$$\text{supp}(f) \coloneqq \overline{\{x \in M: f(x) \neq 0\}}.$$
For $n \in \mathbb{N},$ we write $[n]$ to denote the set $\{1,...,n\}.$ 
We say a property on a measure space holds \emph{almost everywhere} or \emph{a.e.} if it occurs everywhere on its domain except on a set of measure zero. 
Given a function $g:\R^d\to \R^m$ and measure $\mu$ on $\R^d,$ the pushforward measure $g \# \mu$ on $\R^m$ is defined as
\begin{equation*}
    g \# \mu(A) \coloneqq \mu(g^{-1}(A)), \qquad A \subset \R^m.
\end{equation*}
We write $\mathcal{L}$ to denote the $d-$dimensional Lebesgue measure on $\mathbb{R}^d.$ For a measurable function $\rho\in C_b(\R^d,\R_{>0}),$ the measure $\rho\mathcal{L}$ is defined as
\begin{equation*}
    \rho \mathcal{L}(A) \coloneqq \int_{A}\rho\; d\mathcal{L}, \qquad A \subset \R^d.
\end{equation*} 
Given a measure $\mu$ and a sequence of measures $(\mu_n)_{n \in \N}$ on $M \subseteq \R^d$, suppose for all $f \in \mathcal{F}$ that
\begin{equation*}
    \bigg|\int_M f \; d\mu_n -\int_M f \; d\mu\bigg| \to 0 \;\; \text{as}  \; n \to \infty.
\end{equation*}Then we say $(\mu_n)_{n \in \N}$ \emph{converges weakly} to $\mu$ if $\mathcal{F} = C_b(M,\R),$ and $\mu_n$ \emph{converges vaguely} to $\mu$ if $\mathcal{F} = C_c(M, \R).$

\begin{prop} \label{prop: vague weak prop} Let $d \in \N$ and let $(\nu_n)_{n \in \N}$ and $\nu$ be Borel measures on $\R^d.$ If for each compact set $\mathcal{K} \subset \R^d$ we have that $(\nu_n|_\mathcal{K})_{n \in \N}$ converges weakly to $\nu|_{\mathcal{K}},$ then
     $(\nu_n)_{n \in \N}$ converges vaguely to $\nu$. 
     
\end{prop}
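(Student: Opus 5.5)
Fix $\varphi\in C_c(\R^d,\R)$; we must show $\int\varphi\,d\nu_n\to\int\varphi\,d\nu$. The natural move is to choose a compact set carrying the support of $\varphi$ and use the hypothesis on that set. Let $\mathcal{K}$ be a compact set containing $\operatorname{supp}(\varphi)$; for concreteness take $\mathcal{K}=\bar{B}(0,R)$ with $R$ large enough that $\operatorname{supp}(\varphi)\subseteq B(0,R)$.

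The first step is to rewrite integrals against $\nu_n$ and $\nu$ as integrals against their restrictions. Since $\varphi$ vanishes outside $\mathcal{K}$,
\begin{equation*}
\int_{\R^d}\varphi\,d\nu_n=\int_{\mathcal{K}}\varphi\,d\nu_n=\int_{\mathcal{K}}\varphi|_{\mathcal{K}}\,d(\nu_n|_{\mathcal{K}}),
\end{equation*}
and the same identity holds with $\nu$ in place of $\nu_n$.

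The second step is to check that $\varphi|_{\mathcal{K}}$ is an admissible test function for weak convergence on $M=\mathcal{K}$. It is continuous, and it is bounded because $\mathcal{K}$ is compact, so $\varphi|_{\mathcal{K}}\in C_b(\mathcal{K},\R)$. The hypothesis that $\nu_n|_{\mathcal{K}}\to\nu|_{\mathcal{K}}$ weakly then gives
\begin{equation*}
\int_{\mathcal{K}}\varphi|_{\mathcal{K}}\,d(\nu_n|_{\mathcal{K}})\to\int_{\mathcal{K}}\varphi|_{\mathcal{K}}\,d(\nu|_{\mathcal{K}}).
\end{equation*}
Combined with the first step, this is exactly vague convergence tested against $\varphi$. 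Since $\varphi\in C_c(\R^d,\R)$ was arbitrary, $(\nu_n)_{n\in\N}$ converges vaguely to $\nu$.

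The main obstacle is only interpretive: whether $\nu_n|_{\mathcal{K}}$ should be read as a measure on $\mathcal{K}$ or as a measure on $\R^d$ concentrated on $\mathcal{K}$. Either reading works. In the first, the test function is $\varphi|_{\mathcal{K}}$ as above. In the second, one tests against $\varphi$ itself, which lies in $C_b(\R^d,\R)$. No assumption of local finiteness is needed, because integrals of bounded functions against $\nu_n|_{\mathcal{K}}$ are presupposed to be finite by the weak convergence hypothesis.
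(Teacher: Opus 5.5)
Your proof is correct and is essentially the paper's argument. The paper takes $\mathcal{K}=\text{supp}(\Psi)$ itself instead of a closed ball containing the support, rewrites both integrals as integrals against the restrictions to $\mathcal{K}$, and then applies the weak convergence hypothesis; the choice of compact set makes no difference.
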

\begin{proof}
  Let $\Psi \in C_c(\R^d,\R).$ Let $\mathcal{K} \coloneqq \text{supp}(\Psi).$  Since $(\nu_n|_\mathcal{K})_{n \in \N}$ converges weakly to $\nu|_\mathcal{K}$
    \begin{equation*}
        \begin{split}
        \lim_{n \to \infty}\left(\int_{\R^d}\Psi\; d\nu_n - \int_{\R^d}\Psi \; d\nu\right) 
            = \lim_{n \to \infty}\left(\int_{\mathcal{K}} \Psi|_{\mathcal{K}}\;d(\nu_n|_{\mathcal{K}}) - \int_{\mathcal{K}} \Psi|_{\mathcal{K}}\;d(\nu|_{\mathcal{K}}) \right) = 0,
        \end{split}
    \end{equation*} so we conclude that $(\nu_n)_{n \in \N}$ converges vaguely to $\nu.$
\end{proof}

\begin{define}[\cite{lagarias2003repetitive}, Definition $1.1$] \label{define: delone} Let $d \in \N$. A \emph{separated net}, also known as a \emph{Delone set}, is a discrete set $Y \subset \R^d$ satisfying the following two criteria:
\begin{itemize}
    \item (Uniform discreteness) There exists an $s>0$ such that every closed ball of radius $s$ in $\R^d$ contains at most one point of $Y,$ i.e., $Y$ is $2s-$separated. We call the biggest such $s$ the \emph{packing radius} of $Y.$ 
    \item (Relative density) There exists an $S>0$ such that every closed ball of radius $S$ in $\R^d$ contains at least one point of $Y.$ We call the smallest such $S$ the \emph{covering radius} of $Y.$
\end{itemize}
\end{define}

\paragraph{Mappings}
Given $d \in \N$, $K, X \subseteq \R^d$ and $L>0,$ we say a function $f:K \to X$ is \emph{($L$-)Lipschitz} if for every pair $x,y \in K$ $$||f(x)-f(y)|| \leq L||x-y||.$$ The smallest such $L$ is called the \emph{Lipschitz constant} of $f$ and is denoted $\text{Lip}(f).$ Additionally, if $f$ is a bijection, we say $f$ is \emph{co-Lipschitz} if $f^{-1}$ is Lipschitz, and if $L_1,L_2>0$, we say $f$ is \emph{$(\frac{1}{L_1},L_2-)$bi-Lipschitz} if $f$ is $(L_2-)$Lipschitz and $f^{-1}$ is $(L_1-)$Lipschitz. 

Given a set $\mathcal{K} \subseteq \R^d$, we refer to a measurable real-valued function $\rho:\mathcal{K} \rightarrow \mathbb{R}_{>0}$ as a \textit{density}. 

For non-negative real-valued functions $f,g:\R^d \to \R_{\geq 0}$, we write
    $f(x) \in o(g(x))$ if $\lim_{x \to \infty} \frac{f(x)}{g(x)} = 0,$ and $f(x) \in O(g(x))$ if there exists a constant $C>0$ such that $f(x) \leq Cg(x)$ for each $x \in \R_{\geq0}.$
    \begin{define}[\cite{viera2023delone}, Definition $3.1$] \label{define: lip reg}
    Let $d \in \N$, $K,X \subseteq \R^d$ and $C \in \N.$ We say that a Lipschitz map $f:K \to X$ is \emph{Lipschitz regular} if for every ball $B \subset X$ of radius $R > 0$, the set $f^{-1}(B)$ can
be covered by at most $C$ balls of radius $CR$. The smallest value of $C$ that works for every $R>0$ is called
the \emph{regularity} of $f$, and is denoted by \emph{Reg}$(f)$. 
\end{define}
    \bilip*
\begin{proof}
   It is clear that if $f$ is pointwise co-Lipschitz, then $f$ is co-Lipschitz at $f(a)$. Conversely, using the bijectivity of $f$, let $b \in U$ be such that $0<||f(b)-f(a)|| \leq 2S,$ where $S=S(V)>0$ is the covering radius of $V.$ We will show $f$ is co-Lipschitz at $f(b)$, which when applied inductively will show $f$ is co-Lipschitz at every point of $V$.
   
   Since $f$ is co-Lipschitz at $f(a),$ there exists $\beta>0$ such that $$||f(y)-f(a)|| \geq \beta||y-a||\qquad \text{ for every } y \in U .$$ Now let $x \in U\setminus \{b\}$. If $||x-a||\geq \frac{4S}{\beta},$ then we have 
    \begin{equation*} \label{eq: bilip estimate}
        \begin{split}
            \frac{||f(x)-f(b)||}{||x-b||} &\geq \frac{||f(x)-f(a)||- ||f(a)-f(b)||}{||x-a||+||a-b||}=\frac{\frac{||f(x)-f(a)||}{||x-a||}- \frac{||f(a)-f(b)||}{||x-a||}}{1+\frac{||a-b||}{||x-a||}} \\&\geq \frac{\beta-\frac{2S}{||x-a||}}{1+{\frac{||a-b||}{||x-a||}}}\geq \frac{\frac{\beta}{2}}{1+\frac{\beta||a-b||}{4S}} = \frac{2\beta S}{4S+\beta||a-b||}.
        \end{split}
    \end{equation*}
Otherwise, $||x-a|| < \frac{4S}{\beta}.$ There are finitely many points $x$ satisfying this inequality. Indeed, since $f$ is a Lipschitz bijection, we have that
$$B\left(a,\tfrac{4S}{\beta}\right) \cap U \subseteq f^{-1}\left(B\left(f(a),\tfrac{4S\text{Lip}(f)}{\beta}\right)\cap V\right),$$ and the right-hand set contains finitely many points since $V$ is a Delone set and is thus uniformly discrete.
Therefore, the quantity $$\delta \coloneqq \min_{x \in (B(a, \frac{4S}{\beta})\setminus \{b\})\cap U} \frac{||f(x)-f(b)||}{||x-b||}$$ is well-defined, and moreover is positive since $f$ is a bijection. Thus, for each $x \in U$
$$\frac{||f(x)-f(b)||}{||x-b||} \geq \min\bigg\{\delta, \tfrac{2\beta S}{4S+\beta||a-b||}\bigg\}>0,$$
so $f$ is co-Lipschitz at $f(b)$.
\end{proof}

\section{Equivalent notions for Lipschitz bijections between Delone sets} \label{chapter: viera}
\subsection{Co-uniform continuity is equivalent to bi-Lipschitz continuity}
Restrictions of Question \ref{question: lipschitz} for Lipschitz, co-uniformly continuous bijections (see Definition \ref{define: couniform with set}) are studied in \cite{viera2023delone}. We show, using the `stepping-stone' property of Delone sets attributed to Lagarias \cite[Lemma $2.2$]{lagarias1999geometric} that in the context of Lipschitz mappings between Delone sets, co-uniform continuity is in fact equivalent to bi-Lipschitz continuity.

\begin{prop} \label{prop: iff holder couniformity} Let $d \in \N$, $\omega:(0,\infty) \to (0,\infty)$ be a strictly increasing function, $U, V \subseteq \R^d$ with $\diam U = \infty$, and $f:U\to V$ be a bijection. Then $f$ is $\omega$-co-uniformly continuous if and only if
\begin{equation}\label{eq: omega inverse}
    ||f^{-1}(u)-f^{-1}(v)|| \leq \omega(||u-v||) \qquad \text{for all $u,v \in V$}.
\end{equation}
\end{prop}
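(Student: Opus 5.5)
The plan is to prove the two implications separately, unwinding Definition~\ref{define: couniform with set} with the substitution $u=f(y)$, $v=f(x)$. This substitution is legitimate because $f$ is a bijection, so every pair $u,v\in V$ arises as $f(y),f(x)$ for a unique pair $x,y\in U$.

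For the implication ``\eqref{eq: omega inverse} $\Rightarrow$ $\omega$-co-uniform continuity'', fix $x\in U$, $r>0$ and $y\in f^{-1}(B(f(x),r))$, so that $\|f(y)-f(x)\|<r$. Applying \eqref{eq: omega inverse} with $u=f(y)$, $v=f(x)$ and using that $\omega$ is strictly increasing gives
\[
\|y-x\|\le \omega(\|f(y)-f(x)\|)<\omega(r).
\]
Hence $y\in B(x,\omega(r))$. In the degenerate case $y=x$ we have $\|y-x\|=0<\omega(r)$ anyway, since $\omega$ takes positive values. This direction is routine.

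For the converse, fix distinct $u,v\in V$, put $x=f^{-1}(v)$ and $t=\|u-v\|>0$. For every $r>t$ we have $u\in B(v,r)$, so co-uniform continuity gives $\|f^{-1}(u)-f^{-1}(v)\|<\omega(r)$. Letting $r\downarrow t$ yields
\[
\|f^{-1}(u)-f^{-1}(v)\|\le \inf_{r>t}\omega(r)=\omega(t^+).
\]
The case $u=v$ is trivial.

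The main obstacle is the passage from $\omega(t^+)$ to $\omega(t)$. Since $\omega$ is only assumed strictly increasing, it may have jump discontinuities, and at a jump the bound $\omega(t^+)$ is strictly weaker than the required $\omega(t)$. The statement as written does not obviously supply the extra information needed here; I do not see how the hypothesis $\diam U=\infty$ helps at this step, and the first implication did not use it.

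My first attempt would be to show that the open-ball condition at radius $r=t$ can be upgraded to the closed ball. If that fails, I would assume (or arrange by harmless regularization) that $\omega$ is right-continuous. This costs nothing in the later applications, which only concern the order of growth $O(h(r))$ of $\omega$, and replacing $\omega$ by $r\mapsto\omega(r^+)$ keeps it strictly increasing. With right-continuity, $\omega(t^+)=\omega(t)$ and \eqref{eq: omega inverse} follows exactly. I would state this regularization explicitly in the proof rather than leave the boundary case implicit.
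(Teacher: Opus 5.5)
Your first implication is the paper's argument, and you also handle $y=x$, where $\omega(0)$ is undefined, which the paper glosses over. For the converse you take a different route. The paper uses $\diam U=\infty$ to show $\sup\omega=\infty$, then asserts that $\omega$ has an inverse $\omega^{-1}\colon(0,\infty)\to(0,\infty)$ and sets $r=\omega^{-1}(\|f^{-1}(u)-f^{-1}(v)\|)$. It observes that $\|u-v\|<r$ would put $f^{-1}(v)$ inside the open ball about $f^{-1}(u)$ of radius exactly $\|f^{-1}(u)-f^{-1}(v)\|$, so $\|u-v\|\ge r$, and monotonicity finishes. Your limiting argument $r\downarrow t$ never inverts $\omega$ and never uses $\diam U=\infty$.

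Your worry about passing from $\omega(t^{+})$ to $\omega(t)$ is justified. The closed-ball upgrade you plan to try first cannot work, because the proposition is false when $\omega$ is not right-continuous. Take $d=1$, $U=V=\Z$, $f$ the identity, and $\omega(r)=r/2$ for $0<r\le 1$, $\omega(r)=r$ for $r>1$; this $\omega$ is strictly increasing and positive. For $r\le 1$ we have $f^{-1}(B(x,r))=\{x\}\subseteq B(x,\omega(r))$, and for $r>1$ we have $f^{-1}(B(x,r))\subseteq B(x,r)=B(x,\omega(r))$. So $f$ is $\omega$-co-uniformly continuous and $\diam U=\infty$, yet $|f^{-1}(1)-f^{-1}(0)|=1>\tfrac12=\omega(1)$.

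The paper's proof has the same gap in disguise. A strictly increasing unbounded $\omega$ need not map onto $(0,\infty)$; here $1$ is not a value of $\omega$. So $\omega^{-1}$ is undefined exactly where it is needed, and that route tacitly requires $\omega$ to be continuous. The hypothesis $\diam U=\infty$ only secures $\sup\omega=\infty$ and cannot help with jumps.

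Your regularization is the correct repair. Your argument in fact proves, with no hypothesis on $\diam U$, that $f$ is $\omega$-co-uniformly continuous if and only if $\|f^{-1}(u)-f^{-1}(v)\|\le\omega(\|u-v\|^{+})$ for all $u\ne v$. For the backward direction, use $\omega(s^{+})<\omega(r)$ whenever $s<r$. This is exactly \eqref{eq: omega inverse} whenever $\omega$ is right-continuous. Since $r\mapsto\omega(r^{+})$ is again strictly increasing, Lemma~\ref{lemma: couni iff bilip} still applies to it, so Corollary~\ref{cor: couni iff bilip} is unaffected, as you anticipated.
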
 
\begin{proof}
     Let $r>0$ and $x \in U$ be given and suppose \eqref{eq: omega inverse} holds. Now if $y \in f^{-1}(B(f(x),r)),$ then
     $$\omega(r)> \omega(||f(x)-f(y)||) \geq ||x-y||,$$
     so $y \in B(x, \omega(r)).$ Thus, $f^{-1}(B(f(x),r)) \subseteq B(x,\omega(r)),$ so $f$ is $\omega-$co-uniformly continuous.

    Conversely, suppose $f$ is $\omega-$co-uniformly continuous. 
    First, we claim that $\sup \omega = \infty.$ Indeed, suppose for a contradiction that $\sup \omega < \infty.$ Notice that since $\diam U = \infty,$ there exist $x,y \in U$ such that $||x-y|| > \sup \omega.$ Set $r = 2||f(x)-f(y)||,$ which is positive since $f$ is bijective and $x \neq y.$ Then since $f$ is $\omega-$co-uniformly continuous and $||f(x)-f(y)|| <r,$ we have that $||x-y|| < \omega(r) \leq \sup \omega,$ which is a contradiction. 
    
   Since $\omega$ is a strictly increasing function and $\sup \omega = \infty$, $\omega$ has a well-defined and strictly increasing inverse ${\omega^{-1}:(0, \infty) \to (0, \infty)}$. It is clear \eqref{eq: omega inverse} holds when $u = v.$
    Now fix $u \in V$ and $v \in V\setminus \{u\}.$ Take $r = \omega^{-1}(||f^{-1}(u)-f^{-1}(v)||)>0$. Then since $f^{-1}(B(u,r)) \subseteq B(f^{-1}(u), \omega(r))$ $$||u-v|| \geq r = \omega^{-1}(||f^{-1}(u)-f^{-1}(v)||),$$ so \eqref{eq: omega inverse} holds.
\end{proof}
The following lemma shines a light on how the structural properties of a Delone set highly restrict the nature of the so-called \textit{$\omega-$uniform continuity} (see Lemma \ref{lemma: couni iff bilip}\eqref{point 1: couni iff bilip}) of mappings endowed on it. In particular, a mapping from a Delone set that controls the distance of image points in an $\omega-$uniformly continuous manner (for example, $\omega(r) = r^2$) always collapses into a Lipschitz mapping (corresponding to $\omega(r) \in O(r))$.
\begin{lemma} \label{lemma: couni iff bilip}
    Let $d \in \N,$ $V\subset \R^d$ a Delone set and $F:V \to \R^d.$ Then the following are equivalent: 
    \begin{enumerate}[(i)]
        \item \label{point 1: couni iff bilip} There exists a strictly increasing function $\omega:(0,\infty) \to (0,\infty)$ such that
        $$||F(x)-F(y)|| \leq \omega(||x-y||) \qquad \text{for all $x,y \in V$};$$
        \item \label{point 2: couni iff bilip} There exists a strictly increasing function $\omega: (0,\infty) \to (0,\infty)$ with $\omega(r) \in O(r)$ and
        $$||F(x)-F(y)|| \leq \omega(||x-y||) \qquad \text{for all $x,y \in V$};$$
        \item  \label{point 3: couni iff bilip} $F$ is Lipschitz.
    \end{enumerate}
\end{lemma}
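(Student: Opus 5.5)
We prove the cycle of implications (iii)$\Rightarrow$(ii)$\Rightarrow$(i)$\Rightarrow$(iii). The implication (ii)$\Rightarrow$(i) is immediate. For (iii)$\Rightarrow$(ii), if $F$ is $L$-Lipschitz we take $\omega(r)\coloneqq (L+1)r$. This is strictly increasing, lies in $O(r)$, and dominates $L\|x-y\|$. All the content is therefore in (i)$\Rightarrow$(iii), which is where we use the Delone structure of $V$. The idea is the "stepping-stone" property of Lagarias: any two points of $V$ can be joined by a chain of points of $V$ whose consecutive gaps are uniformly bounded, and the number of steps is linear in the distance.

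Let $s,S$ be the packing and covering radii of $V$, and fix $x\neq y$ in $V$. Put $N\coloneqq \lceil \|x-y\|/S\rceil$ and $p_i\coloneqq x+\tfrac{i}{N}(y-x)$ for $i=0,\dots,N$. Each closed ball $\bar B(p_i,S)$ contains a point $v_i\in V$, and we choose $v_0=x$, $v_N=y$. Then
\[
\|v_{i+1}-v_i\|\le \|p_{i+1}-p_i\|+2S\le S+2S=3S .
\]
By the triangle inequality and the monotonicity of $\omega$,
\[
\|F(x)-F(y)\|\le \sum_{i=0}^{N-1}\|F(v_{i+1})-F(v_i)\|\le N\,\omega(3S).
\]
Here a step with $v_{i+1}=v_i$ contributes $0$, so this is harmless. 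Uniform discreteness gives $\|x-y\|\ge 2s$, and hence $N\le \|x-y\|/S+1\le \|x-y\|\,(1/S+1/(2s))$. We conclude that $\|F(x)-F(y)\|\le \omega(3S)\,(1/S+1/(2s))\,\|x-y\|$, so $F$ is Lipschitz.

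The only point needing care is the chain construction. Specifically, we must check that the step length stays bounded independently of $\|x-y\|$, and that the number of steps is controlled linearly in $\|x-y\|$ even for nearby points. The first holds because each $v_i$ is within $S$ of its anchor $p_i$. The second uses the separation constant $s$ to absorb the additive $+1$ in $N$. Beyond this, the argument is routine.
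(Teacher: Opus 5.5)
Your proof is correct and follows the paper's argument. The paper also proves (i)$\Rightarrow$(iii) by summing $\omega$ of bounded steps along a stepping-stone chain in $V$; it cites Lagarias' lemma for a chain with steps at most $2S$ and at most $\frac{4S}{s^2}\|x-y\|$ points, whereas you build the chain yourself with steps at most $3S$. Your choice $\omega(r)=(L+1)r$ in (iii)$\Rightarrow$(ii) is slightly more careful than the paper's $\omega(r)=\text{Lip}(F)\,r$, which fails to be strictly increasing and positive when $F$ is constant.
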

\begin{proof}
 $(\eqref{point 2: couni iff bilip} \Rightarrow \eqref{point 1: couni iff bilip})$ is clear.  For $(\eqref{point 3: couni iff bilip}\Rightarrow \eqref{point 2: couni iff bilip})$, take $\omega(r) \coloneqq \text{Lip}(F)r \in O(r).$
 For $(\eqref{point 1: couni iff bilip} \Rightarrow \eqref{point 3: couni iff bilip})$, let $s,S>0$ be the packing and covering radius of $V$, respectively. Fix $x,y \in V.$ By the `stepping-stone' property of Delone sets (\cite[Lemma $2.2$]{lagarias1999geometric}), there exists a chain of points $x = x_0,x_1,...,x_m = y$ in $V$ with $m \leq \frac{4S}{s^2}||x-y||$ and $||x_i-x_{i-1}||\leq 2S$ for each $i \in [m].$ Thus, 
    $$||F(x)-F(y)|| \leq \sum_{i = 1}^m ||F(x_i)-F(x_{i-1})|| \leq \sum_{i=1}^m \omega(||x_i - x_{i-1}||) \leq \sum_{i=1}^m \omega(2S) \leq \omega(2S)\frac{4S}{s^2}||x-y||,$$
 so $F$ is Lipschitz.
 \end{proof}
\begin{restatable}{cor}{vieracorollary} \label{viera corollary}
    \label{cor: couni iff bilip}
    Let $d \in \N,$ $U \subset \R^d$ with $\diam U = \infty$, $V\subset \R^d$ a Delone set, and $f:U \to V$ a bijection. Then the following are equivalent: 
    \begin{enumerate}[(i)]
        \item  \label{point 1: cor} $f$ is co-uniformly continuous;
        \item  \label{point 2: cor} $f$ is co-uniformly continuous of order $O(r)$;
        \item \label{point 3: cor} $f^{-1}$ is Lipschitz.
    \end{enumerate} 
\end{restatable}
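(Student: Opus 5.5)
The plan is to chain Proposition~\ref{prop: iff holder couniformity} with Lemma~\ref{lemma: couni iff bilip}, applied to $F \coloneqq f^{-1}:V\to \R^d$. The hypotheses match exactly. Proposition~\ref{prop: iff holder couniformity} needs $\diam U=\infty$ and a bijection $f:U\to V$. Lemma~\ref{lemma: couni iff bilip} needs $V$ to be a Delone set and a map defined on $V$.

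The implication \eqref{point 2: cor}$\Rightarrow$\eqref{point 1: cor} is immediate from the definitions. For \eqref{point 1: cor}$\Rightarrow$\eqref{point 3: cor}, suppose $f$ is $\omega$-co-uniformly continuous for some strictly increasing $\omega$. By Proposition~\ref{prop: iff holder couniformity}, this gives $\|f^{-1}(u)-f^{-1}(v)\|\le \omega(\|u-v\|)$ for all $u,v\in V$. That is condition \eqref{point 1: couni iff bilip} of Lemma~\ref{lemma: couni iff bilip} for $F=f^{-1}$, so the lemma shows $f^{-1}$ is Lipschitz.

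For \eqref{point 3: cor}$\Rightarrow$\eqref{point 2: cor}, let $L=\mathrm{Lip}(f^{-1})$. If $L=0$, replace it by any positive constant; this cannot actually occur, since $f^{-1}$ is injective and $V$ has at least two points. Define $\omega(r)\coloneqq Lr$. It is strictly increasing, maps $(0,\infty)$ to $(0,\infty)$, lies in $O(r)$, and satisfies $\|f^{-1}(u)-f^{-1}(v)\|\le \omega(\|u-v\|)$ for all $u,v\in V$. The reverse direction of Proposition~\ref{prop: iff holder couniformity} then shows $f$ is $\omega$-co-uniformly continuous, and hence of order $O(r)$.

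I do not expect a real obstacle. The only care needed is to check that each invocation meets the exact hypotheses: $\omega$ must be strictly increasing and positive-valued, and the Delone assumption must sit on $V$, the domain of $f^{-1}$, which is where the stepping-stone argument inside Lemma~\ref{lemma: couni iff bilip} is used.
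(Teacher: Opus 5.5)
Your proposal is correct and is exactly the paper's argument: the corollary is Lemma~\ref{lemma: couni iff bilip} applied to $F=f^{-1}$, transferred through Proposition~\ref{prop: iff holder couniformity}, and your hypothesis checks are fine.

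One small caveat is inherited from the cited proposition. Its forward direction tacitly inverts $\omega$ on all of $(0,\infty)$, which fails for discontinuous $\omega$. For example, $f=\mathrm{id}_{\Z}$ with $\omega(r)=r/2$ on $(0,1]$ and $\omega(r)=r+1$ for $r>1$ is $\omega$-co-uniformly continuous, yet $\|f^{-1}(0)-f^{-1}(1)\|=1>\omega(1)$. For (i)$\Rightarrow$(iii) you can bypass this directly: $v\in B(u,2\|u-v\|)$ gives $\|f^{-1}(u)-f^{-1}(v)\|<\omega(2\|u-v\|)$, and $r\mapsto\omega(2r)$ is an admissible modulus for Lemma~\ref{lemma: couni iff bilip}.
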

   \begin{proof}
In light of Proposition~\ref{prop: iff holder couniformity}, the present corollary is exactly the statement of Lemma~\ref{lemma: couni iff bilip} for $F=f^{-1}$.
   \end{proof}

   \subsection{Co-homogeneity is equivalent to pointwise co-Lipschitz continuity}
In this section, we show the equivalence of pointwise co-Lipschitz continuity and co-homogeneity in the class of Lipschitz bijections between Delone sets. We refer the reader to Definitions \ref{define: bilip with set} and \ref{define: co-homogeneity} for definitions of these two notions. This allows for Theorem~\ref{thm: final result 1} to be equivalently stated as Theorem~\ref{thm: corollary of PBL delone result}. In Chapter \ref{chapter 4}, we construct non-bilipschitz examples of Lipschitz, pointwise co-Lipschitz bijections between Delone sets, and so these notions define a strictly larger class of of bijections between Delone sets than the class of bilipschitz bijections.
\begin{thm} \label{thm: co-hom equivalent to pointwise BL}
    Let $d \in \N$, $K \subset \R^d$ be a uniformly discrete set, $X \subseteq \R^d$, $a \in K$ and $f:K \to X$ be a mapping. Then $f$ is Lipschitz at $a$ if and only if $f$ is homogeneous.
\end{thm}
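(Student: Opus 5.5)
The plan is to prove both implications directly from the definitions. The only structural input is the uniform discreteness of $K$: distinct points are at distance at least $2s$, where $s$ is the packing radius. Neither direction needs Lemma~\ref{lemma: couni iff bilip} or the stepping-stone property. Definition~\ref{define: co-homogeneity} only asks that $\omega$ be non-decreasing with $\omega(0)=0$, so we are free to choose a very crude $\omega$.

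\emph{Lipschitz at $a$ $\Rightarrow$ homogeneous.} Let $L$ be the pointwise Lipschitz constant at $a$. Set $\omega(0)=0$ and $\omega(t)=1$ for $t\in(0,2)$; this is non-decreasing with $\omega(0)=0$. Fix $R>0$ and $x,y\in B(0,R)\cap K$ with $x\neq y$, since the case $x=y$ is trivial. The triangle inequality through $a$ gives
\[
\|f(x)-f(y)\|\le L(\|x-a\|+\|y-a\|)\le 2L(R+\|a\|).
\]
If $R\le s$, the ball $B(0,R)$ contains at most one point of $K$, so there is nothing to check. If $R>s$, then $R+\|a\|\le R(1+\|a\|/s)$. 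Hence the homogeneity inequality holds with $c=2L(1+\|a\|/s)$, because $\omega(\|x-y\|/R)=1$ whenever $x\neq y$. This $\omega$ is discontinuous at $0$. That is unavoidable in general: the ratio $\|x-y\|/R$ can be arbitrarily small while the bound is only linear in $R$. It is, however, permitted by the definition as stated.

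\emph{Homogeneous $\Rightarrow$ Lipschitz at $a$.} Let $c,\omega$ witness homogeneity, and fix $x\in K\setminus\{a\}$. Choose $R=2(\|x\|+\|a\|)+1$. Then $x,a\in B(0,R)$ and $\|x-a\|/R<1$, so monotonicity gives
\[
\|f(x)-f(a)\|\le cR\,\omega(1).
\]
It remains to bound $R$ linearly in $\|x-a\|$. Using $\|x\|\le\|x-a\|+\|a\|$ we get $R\le 2\|x-a\|+4\|a\|+1$. Uniform discreteness gives $\|x-a\|\ge 2s$, so
\[
R\le\Bigl(2+\tfrac{4\|a\|+1}{2s}\Bigr)\|x-a\|.
\]
Therefore $f$ is Lipschitz at $a$ with constant $c\,\omega(1)\bigl(2+\tfrac{4\|a\|+1}{2s}\bigr)$.

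The only delicate point is choosing $R$ in the second direction. It must be large enough that both points lie in $B(0,R)$ and that the argument of $\omega$ stays in a range where $\omega$ is bounded, here below $1<2$. It must also be comparable to $\|x-a\|$, and uniform discreteness supplies this for points close to $a$. In the first direction, the matching obstacle is small $R$, which is handled by observing that balls of radius at most $s$ contain at most one point of $K$.
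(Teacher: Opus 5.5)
Your proof is correct. The direction from homogeneity to the Lipschitz condition at $a$ is the paper's argument up to cosmetic choices: the paper takes $R=2(\|x-a\|+\|a\|)$, bounds by $\omega(\tfrac12)$, and absorbs $\|a\|$ into $\|x-a\|$ via separation.

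The other direction differs in how $\omega$ is produced.
\begin{itemize}
\item The paper defines the canonical modulus $\omega(t)=\sup_{S>0}\sup\{\|f(u)-f(v)\|/S : u,v\in B(0,S)\cap K,\ \|u-v\|\le tS\}$. For this modulus, homogeneity with $c=1$ is automatic, so the work lies in showing $\omega\le 2L$. Uniform discreteness enters through $p=\inf_{z\in K\setminus\{0\}}\|z\|>0$, which turns $\|x-a\|$ into a multiple of $\|x\|\le S$.
\item You instead fix $\omega=\mathbf{1}_{(0,2)}$ and push all constants into $c$. Separation enters in the form ``a ball of radius at most $s$ meets $K$ in at most one point'', which lets you compare $\|a\|$ with $R$.
\end{itemize}
The core estimate is the same in both: the triangle inequality through $a$, giving oscillation $O(R)$ over $B(0,R)\cap K$. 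The paper's bound $\omega\le 2L$ says exactly that its modulus is dominated by $2L$ times yours.

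Your route is shorter, and it makes plain that under this definition homogeneity records nothing beyond a linear-in-$R$ oscillation bound. The paper's route buys the minimal modulus for $c=1$. This is the object it later invokes when discussing optimal rates of co-homogeneity for $g^{(d)}$, though it is not needed for the bare equivalence.

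Your remark that a discontinuity of $\omega$ at $0$ is unavoidable in general is also right. By minimality, it applies to the paper's modulus as well.
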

\begin{proof} If $K = \{a\},$ then $f$ is trivially homogeneous and Lipschitz at $a$. For the remainder of the proof, we will assume $K$ contains at least two distinct points. Given $K$ contains at least two distinct points and is uniformly discrete, we have that $p \coloneqq \inf_{z \in K \setminus \{0\}} ||z||$ is well-defined and positive, and further, there exists $s>0$ such that $||x-y||\geq s$ for every pair $x,y \in K.$

    First suppose $f$ is homogeneous. Then there exists a non-decreasing function $\omega:[0,2) \to [0,\infty)$ and $c>0$ such that for each $R>0$ 
    \begin{equation} \label{eq: homogeneity}
        ||f(x)-f(y)|| \leq cR \omega\left(\tfrac{||x-y||}{R}\right) \qquad \text{whenever} \; x,y \in B(0,R) \cap K.
    \end{equation}
Fix $x \in K\setminus \{a\}.$ Take $R \coloneqq 2(||x-a||+||a||)>0.$ Notice $R >||a||$ and 
$$||x|| \leq ||x-a|| + ||a|| < R,$$ so $x,a \in B(0,R).$ Then by \eqref{eq: homogeneity}
\begin{equation*}
    \begin{split}
        ||f(x)-f(a)|| &\leq 2c(||x-a||+||a||) \omega\left(\tfrac{||x-a||}{2(||x-a||+||a||)}\right) \leq  2c\omega\left(\tfrac{1}{2}\right)(||x-a||+||a||)
        \\&= 2c\omega\left(\tfrac{1}{2}\right)\left(1+\tfrac{||a||}{||x-a||}\right)||x-a|| \leq 2c\omega\left(\tfrac{1}{2}\right)\left(1+\tfrac{||a||}{s}\right)||x-a||,
    \end{split}
\end{equation*}
so $f$ is Lipschitz at $a$.

Conversely, suppose $f$ is Lipschitz at $a.$ Then there exists $\tilde{L}>0$ such that
\begin{equation} \label{eq: tilde L}
   ||f(x)-f(a)||\leq \tilde{L}||x-a|| \qquad \text{whenever }x \in K.
\end{equation}
By \eqref{eq: tilde L}, taking $L= L(a) \coloneqq \tilde{L}\left(1+\tfrac{||a||}{p}\right),$ then for each $x \in K \setminus \{0\}$
\begin{equation} \label{eq: L}
    ||f(x)-f(a)|| \leq \tilde{L}||x-a||\leq \tilde{L}(||x|| + ||a||) = \tilde{L}\left(1+\tfrac{||a||}{||x||}\right)||x|| \leq \tilde{L}\left(1+\tfrac{||a||}{p}\right)||x|| = L||x||.
\end{equation}
Define the function $\omega:[0,2) \to [0,\infty)$ by
$$\omega(t) \coloneqq \sup_{S>0} \sup_{\substack{u,v \in B(0,S) \cap K\\||u-v|| \leq tS}} \frac{||f(u)-f(v)||}{S}.$$
We claim that $f$ is $\omega-$homogeneous. To this end, we must check the conditions of Definition \ref{define: co-homogeneity} hold for $\omega$. Clearly, we have that $\omega(0) = 0$. Next, we claim $\omega$ is a finite-valued function. Indeed, suppose $S >0$ and $u,v \in B(0,S)\cap K$. Without loss of generality, if $u = 0$ and $v \neq 0$, then we see by \eqref{eq: tilde L} that $||f(u)-f(a)|| = ||f(0)-f(a)|| \leq \tilde{L}||a||,$ so since $S \geq ||v|| >0$
$$ \tfrac{1}{S}{||f(u)-f(v)||} \leq \tfrac{1}{||v||}{||f(u)-f(a)||}+\tfrac{1}{||v||}{||f(v)-f(a)||} \leq \tfrac{1}{p}{\tilde{L}||a||} + L= (L-\tilde{L})+L\leq 2L.$$
Now consider $u, v \neq 0.$
Notice since $S \geq ||u||>0$ and $S \geq ||v||>0$ that
\begin{equation*}
     \tfrac{1}{S}{||f(u)-f(v)||} \leq \tfrac{1}{S}{||f(u)-f(a)||}+\tfrac{1}{S}{||f(v)-f(a)||} \leq \tfrac{1}{||u||}{||f(u)-f(a)||}+\tfrac{1}{||v||}{||f(v)-f(a)||} \leq 2L.
\end{equation*}
    Therefore, for each $t \in [0,2)$ $$\omega(t) \coloneqq \sup_{S>0} \sup_{\substack{u,v \in B(0,S) \cap K\\||u-v|| \leq tS}}  \frac{||f(u)-f(v)||}{S} \leq 2L.$$
    Finally, we claim $\omega$ is a non-decreasing function. Indeed, given $t, t' \in (0,2)$ with $t \leq t',$ notice for each $S >0$ that $$\{u,v \in B(0,S)\cap K: ||u-v|| \leq tS\} \subseteq \{u,v \in B(0,S)\cap K: ||u-v|| \leq t'S\},$$ so
    $$\sup_{\substack{u,v \in B(0,S)\cap K\\||u-v|| \leq tS}} \frac{||f(u)-f(v)||}{S} \leq  \sup_{\substack{u,v \in B(0,S)\cap K\\||u-v|| \leq t'S}} \frac{||f(u)-f(v)||}{S},$$
    so $\omega(t) \leq \omega(t'),$ and hence $\omega$ is non-decreasing.
     We conclude $\omega$ satisfies all the conditions of Definition \ref{define: co-homogeneity}.
    
   Now fix $R>0$ and $x,y \in B(0,R)\cap K.$ Then
    \begin{equation*}
        \omega\left(\frac{||x-y||}{R}\right) \ = \sup_{S>0} \sup_{\substack{u,v \in B(0,S)\cap K\\||u-v|| \leq \frac{||x-y||}{R}S}} \frac{||f(u)-f(v)||}{S} \geq \sup_{\substack{u,v \in B(0,R)\cap K\\||u-v|| \leq ||x-y||}} \frac{||f(u)-f(v)||}{R}\geq \frac{||f(x)-f(y)||}{R},
    \end{equation*} so $f$ is $\omega-$homogeneous.
\end{proof}
As a consequence of Theorem~\ref{thm: co-hom equivalent to pointwise BL}, we get that the pointwise co-Lipschitz property is the same as co-homogeneity for Lipschitz bijections to a Delone set. Thus, we recognise that Theorem~\ref{thm: corollary of PBL delone result} is a restatement of Theorem~\ref{thm: final result 1}:
\begin{cor} \label{cor: cohom equivalence}
    Let $d \in \N,$ $U \subset \R^d$, $V \subset \R^d$ be a Delone set, and $f:U \to V$ be a Lipschitz bijection. Then $f$ is pointwise co-Lipschitz if and only if $f$ is co-homogeneous. 
\end{cor}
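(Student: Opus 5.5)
The corollary follows by applying Theorem~\ref{thm: co-hom equivalent to pointwise BL} to the inverse map $f^{-1}:V\to U$, with $K=V$, $X=U$ and $a=f(b)$ for an arbitrary $b\in U$. This is legitimate because $V$ is a Delone set, hence uniformly discrete, which is the only hypothesis that theorem places on its domain. By Definition~\ref{define: co-homogeneity}, $f$ is co-homogeneous exactly when $f^{-1}$ is homogeneous. Theorem~\ref{thm: co-hom equivalent to pointwise BL} then says that $f^{-1}$ is homogeneous if and only if $f^{-1}$ is Lipschitz at the single point $f(b)$. In the language of Definition~\ref{define: bilip with set}, this means $f$ is co-Lipschitz at $f(b)$.

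Next I bridge from a single point to all points using Proposition~\ref{thm: bilip}. Its hypotheses are exactly ours: $U\subset\R^d$, $V$ a Delone set, and $f:U\to V$ a Lipschitz bijection. So $f$ is co-Lipschitz at $f(b)$ if and only if $f$ is pointwise co-Lipschitz.

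Both directions now follow by chaining the two equivalences.
\begin{itemize}
\item If $f$ is pointwise co-Lipschitz, it is in particular co-Lipschitz at $f(b)$. By the theorem, $f^{-1}$ is then homogeneous, so $f$ is co-homogeneous.
\item If $f$ is co-homogeneous, the theorem gives that $f^{-1}$ is Lipschitz at $f(b)$. Proposition~\ref{thm: bilip} then upgrades this to pointwise co-Lipschitz.
\end{itemize}

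The only case needing care is $U=\emptyset$, where there is no point $b$ to choose; there both properties hold vacuously. Beyond that there is no real obstacle. The one point to note is that Lipschitzness of $f$ is used only in the second direction, via Proposition~\ref{thm: bilip}; the first direction needs no hypothesis on $f$ beyond bijectivity.
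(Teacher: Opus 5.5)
Your proposal is correct and matches the paper's proof: fix a point of $V$, use Proposition~\ref{thm: bilip} to pass between being co-Lipschitz at that point and being pointwise co-Lipschitz, and apply Theorem~\ref{thm: co-hom equivalent to pointwise BL} to $f^{-1}$ on the uniformly discrete set $V$. Your $U=\emptyset$ caveat is unnecessary, since relative density forces the Delone set $V$, and hence $U$, to be nonempty.
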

\begin{proof} Fix a point $v \in V.$ By Proposition \ref{thm: bilip}, $f$ is co-Lipschitz at $v$ if and only if $f$ is pointwise co-Lipschitz. Now apply Theorem \ref{thm: co-hom equivalent to pointwise BL} to $f^{-1}$ to obtain that $f$ is co-Lipschitz at $v$ if and only if $f$ is co-homogeneous.
\end{proof}

\begin{cor}\label{cor:two_theorems_the_same}
    Theorem~\ref{thm: final result 1} and Theorem~\ref{thm: corollary of PBL delone result} are equivalent.
\end{cor}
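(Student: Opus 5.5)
We need to show that for a Delone set $Y\subset\R^d$ and a bijection $f$ between $Y$ and $\Z^d$ (in either direction), the two hypotheses in Theorem~\ref{thm: final result 1} coincide with those in Theorem~\ref{thm: corollary of PBL delone result}. Specifically, "Lipschitz and pointwise co-Lipschitz" should coincide with "Lipschitz and co-homogeneous", and "pointwise Lipschitz and co-Lipschitz" with "co-Lipschitz and homogeneous". Once this holds for every bijection, a Delone set witnessing one theorem witnesses the other, so the existence statements are equivalent.

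For the first pair, let $f:U\to V$ be Lipschitz, where $U,V\in\{Y,\Z^d\}$ are the two Delone sets. Since $V$ is Delone, Corollary~\ref{cor: cohom equivalence} applies directly and gives that $f$ is pointwise co-Lipschitz if and only if $f$ is co-homogeneous. This settles the first pair in both directions of the bijection.

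For the second pair, let $f:U\to V$ be co-Lipschitz, and set $g\coloneqq f^{-1}:V\to U$. Then $g$ is a Lipschitz bijection onto the Delone set $U$. By Definitions~\ref{define: bilip with set} and~\ref{define: co-homogeneity}:
\begin{itemize}
\item $f$ is pointwise Lipschitz exactly when $g$ is pointwise co-Lipschitz;
\item $f$ is homogeneous exactly when $g$ is co-homogeneous (since $g^{-1}=f$).
\end{itemize}
Applying Corollary~\ref{cor: cohom equivalence} to $g$ shows that these two conditions are equivalent.

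Alternatively, one can argue directly from Theorem~\ref{thm: co-hom equivalent to pointwise BL} applied to $f$ on the uniformly discrete set $U$. Then $f$ is homogeneous if and only if $f$ is Lipschitz at some point $a$. Proposition~\ref{thm: bilip} applied to $g$ upgrades "Lipschitz at one point" to "pointwise Lipschitz".

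The main point to check is that all hypotheses hold: $U$ and $V$ are Delone, hence uniformly discrete, and the bijectivity needed for the "co" notions is given. Quantifying over all bijections in both directions then yields the equivalence of the two theorems.
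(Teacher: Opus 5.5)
Your proposal is correct and follows the paper's own route. The paper simply says the equivalence is immediate from Corollary~\ref{cor: cohom equivalence}, and you unpack this by applying that corollary to $f$ in the Lipschitz, pointwise co-Lipschitz case and to $f^{-1}$ in the co-Lipschitz, pointwise Lipschitz case. Your alternative argument via Theorem~\ref{thm: co-hom equivalent to pointwise BL} and Proposition~\ref{thm: bilip} is just the proof of that corollary written out inline.
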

\begin{proof}
 This is immediate from Corollary~\ref{cor: cohom equivalence}.
\end{proof}
 
\section{Proof of Theorem \ref{thm: main theorem}} \label{chapter: 1.10}
In this section, we will lay the groundwork to prove Theorem \ref{thm: main theorem}. There are crucial details in this section which differ from prior literature using the pushforward method (\ref{enumerate: assumption before}). This stems from the lack of compactness of $\R^d$, which is the domain of both the density $\rho$ to be encoded in a Delone set, as well as of a sequence of Lipschitz mappings we will construct later. To this end, we consider vague convergence instead of weak convergence for sequences of measures, and uniform convergence on bounded sets rather than global uniform convergence for sequences of Lipschitz mappings on $\R^d$.
\begin{lemma}[\cite{dymond2018mapping}, variant of Lemma $5.5$] \label{lemma: pushforward measures vaguely converging} Let $d \in \N$, and $(\nu_n)_{n \in \N}$ and $\nu$ be Borel measures on $\R^d$ such that $(\nu_n)_{n \in \N}$ converges vaguely to $\nu$. Assume for each bounded set $\mathcal{K} \subset \R^d$ there exists $c>0$ such that 
\begin{equation} \label{eq: lemma condition 1}
    \nu_n(\mathcal{K}) \leq c \qquad \text{for each $n \in \N$}.
\end{equation}
Let $h_n:\R^d \to \R^d$ be a sequence of continuous mappings converging uniformly to $h:\R^d\to \R^d$ on every bounded set. Assume for each $r>0$ there exists $R>0$ with
\begin{equation} \label{eq: lemma condition 2}
    h_n^{-1}(B(0,r))\cup h^{-1}(B(0,r)) \subseteq B(0,R) \qquad \text{ for each $n \in \N$}.
\end{equation}Then $(h_n \# \nu_n)_{{n\in\N}}$ converges vaguely to $h \# \nu$.
    \begin{proof}
        Fix $\Psi:\R^d\to \R$ which is continuous with compact support. In particular, there exists $r >0$ such that $\text{supp}(\Psi)\subseteq B(0,r).$ Then writing $S_n \coloneqq \text{supp}(\Psi \circ h_n) \cup \text{supp}(\Psi \circ h)$ for each $n \in \N$ as shorthand
        \begin{equation} \label{eq: pushforward lemma}
            \begin{split}
        &\bigg|\int_{h_n(\R^d)}\Psi \; d(h_n \# \nu_n) - \int_{h(\R^d)}\Psi \; d(h \# \nu)\bigg| = \bigg|\int_{\R^d}\Psi\circ h_n \; d\nu_n - \int_{\R^d}\Psi \circ h\; d\nu\bigg| 
        \\&\leq\int_{\R^d}|\Psi\circ h_n - \Psi \circ h|\; d\nu_n+\bigg|\int_{\R^d}\Psi\circ h \; d\nu_n - \int_{\R^d}\Psi \circ h\; d\nu\bigg|\\
        &\leq \nu_n(S_n)\;\sup_{x \in S_n}|\Psi\circ h_n(x) - \Psi \circ h(x)|+ \bigg|\int_{\R^d}\Psi\circ h \; d\nu_n - \int_{\R^d}\Psi \circ h\; d\nu\bigg|.
            \end{split}
        \end{equation} By \eqref{eq: lemma condition 2}, there exists $R>0$ such that for each $n \in \N$ we have
        $$S_n\subseteq h_n^{-1}(\text{supp}(\Psi)) \cup h^{-1}(\text{supp}(\Psi)) \subseteq h_n^{-1}(B(0,r)) \cup h^{-1}(B(0,r))\subseteq B(0,R),$$
        so by \eqref{eq: lemma condition 1}, there exists $c>0$ such that $\nu_n(S_n) \leq c$ for each $n \in \N$. 
        Consequently, the first term of the last line of \eqref{eq: pushforward lemma} is bounded by $$c\sup_{x \in B(0,R)} |\Psi \circ h_n(x) - \Psi \circ h(x)|,$$ which goes to $0$ as $n \to \infty$ since $(\Psi \circ h_n)_{n \in \N}$ converges uniformly to $\Psi \circ h$ on $B(0,R)$. The second term of the last line of \eqref{eq: pushforward lemma} goes to $0$ as $n \to \infty$ since $(\nu_n)_{n \in \N}$ converges vaguely to $\nu$ and $\Psi \circ h$ is a continuous function with support contained in $B(0,R)$. 
    \end{proof}
\end{lemma}
\begin{lemma} \label{lemma: lip extension on Rd}
    Let $d \in \N$, $L>0$ and $h_n:\R^d\to \R^d$ be a sequence of $L-$Lipschitz maps such that $||h_n(0)|| \xrightarrow{n \to \infty} 0.$ Then there exists an $L-$Lipschitz map $h:\R^d \to \R^d$ and a strictly increasing sequence $(m_k)_{k \in \N} \subseteq \N$ such that for each bounded set $\mathcal{K} \subset \R^d,$ $(h_{m_k}|_\mathcal{K})_{k \in \N}$ converges uniformly to $h|_\mathcal{K}$.
\end{lemma}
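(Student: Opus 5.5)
The plan is an Arzelà--Ascoli argument on an exhausting sequence of closed balls, followed by a diagonal extraction. First, for every $x\in\R^d$ and $n\in\N$,
\[
\|h_n(x)\|\le \|h_n(0)\|+L\|x\|\le C_0+L\|x\|, \qquad C_0\coloneqq\sup_{n\in\N}\|h_n(0)\|<\infty,
\]
and $C_0$ is finite because $\|h_n(0)\|\to 0$. Hence on each closed ball $\bar B(0,j)$, $j\in\N$, the restrictions $(h_n|_{\bar B(0,j)})_{n\in\N}$ are uniformly bounded. They are also equicontinuous, since all are $L$-Lipschitz. By the Arzelà--Ascoli theorem, every subsequence of $(h_n)$ has a further subsequence converging uniformly on $\bar B(0,j)$.

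Next I would extract the subsequences inductively.
\begin{enumerate}
\item Choose a strictly increasing sequence $(m^{(1)}_k)_{k}$ along which $h_{m^{(1)}_k}$ converges uniformly on $\bar B(0,1)$.
\item Given $(m^{(j)}_k)_k$, choose a subsequence $(m^{(j+1)}_k)_k$ of it converging uniformly on $\bar B(0,j+1)$.
\item Take the diagonal sequence $m_k\coloneqq m^{(k)}_k$. It is strictly increasing, since $m^{(k+1)}_{k+1}\geq m^{(k)}_{k+1}>m^{(k)}_k$.
\end{enumerate}
For each fixed $j$, the tail $(m_k)_{k\ge j}$ is a subsequence of $(m^{(j)}_k)_k$. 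So $(h_{m_k})$ converges uniformly on every $\bar B(0,j)$.

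The limit is then defined pointwise: $h(x)\coloneqq\lim_{k}h_{m_k}(x)$. This is well defined and consistent across balls, because the limits on nested balls agree where they overlap. Passing to the limit in $\|h_{m_k}(x)-h_{m_k}(y)\|\le L\|x-y\|$ shows that $h$ is $L$-Lipschitz. Since $h(0)=\lim_k h_{m_k}(0)=0$, we also get $h(0)=0$, though this is not needed.

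Finally, any bounded $\mathcal K\subset\R^d$ lies in some $\bar B(0,j)$, so $(h_{m_k}|_{\mathcal K})$ converges uniformly to $h|_{\mathcal K}$, as required.

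No step is a genuine obstacle. The only point needing care is the diagonal argument: we must keep the index sequence strictly increasing and check that uniform convergence on each ball survives passage to the diagonal subsequence, which holds because from index $j$ onward the diagonal sequence is a subsequence of the $j$-th one.
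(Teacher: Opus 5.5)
Your proposal is correct and follows essentially the same route as the paper: you bound $\sup_n\|h_n(0)\|$ to get uniform boundedness on each $\bar B(0,j)$, extract nested subsequences by Arzel\`a--Ascoli, take the diagonal $m_k=m^{(k)}_k$ with the same strict-monotonicity check, and pass to the pointwise limit to get an $L$-Lipschitz $h$ that is the uniform limit on every bounded set.
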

\begin{proof}
Since $||h_n(0)|| \xrightarrow{n \to \infty} 0$, there exists $C>0$ such that $||h_n(0)|| \leq C$ for each $n \in \N.$ Therefore, given $j \in \N,$ we have for each $n \in \N$ and $x \in \overline{B}(0,j)$ that
    $$||h_n(x)|| \leq ||h_n(x)-h_n(0)||+||h_n(0)||\leq L||x||+C\leq Lj+C,$$ so $(h_n|_{\overline{B}(0,j)})_{n \in \N}$ is uniformly bounded. 

Inductively, we construct a sequence of subsequences of $\N,$ $((n_k^{(j)})_{k \in \N})_{j \in \N},$ as follows. Set $(n_k^{(0)})_{k \in \N} \coloneqq \N,$ and let $j \in \N.$
Since the sequence $(h_n|_{\overline{B}(0,j)})_{n \in \N}$ is uniformly bounded and uniformly $L-$Lipschitz, then by the Arzelà-Ascoli theorem we may take a subsequence $(n_k^{(j)})_{k \in \N} \subseteq (n_k^{(j-1)})_{k \in \N}$ such that $(h_{n_k^{(j)}}|_{\overline{B}(0,j)})_{k \in \N}$ is uniformly convergent. 

Now define the diagonal sequence $(m_k)_{k \in \N}\subseteq \N$ given by $m_k \coloneqq n_k^{(k)}$ for each $k \in \N.$ It is clear that $(m_k)_{k \in \N}$ is strictly increasing since 
$$m_{k+1} =n^{(k+1)}_{k+1} \geq n_{k+1}^{(k)}>n_{k}^{(k)}=m_k, \qquad k \in \N,$$ where for the first inequality we used that $(n_k^{(j)})_{k \in \N} \subseteq (n_k^{(j-1)})$ for each $j \in \N,$ and for the second inequality we used that $(n_k^{(j)})_{k \in \N}$ is a subsequence of $\N$ for each $j \in \N.$ Now define the function $h:\R^d \to \R^d$ by
$$h(x) \coloneqq \lim_{k \to \infty} h_{m_k}(x).$$ 
Notice $h$ is well-defined since for each $j \in \N$, $(m_k)_{k \geq j}$ is a subsequence of $(n_k^{(j)})_{k \in \N}$ since each sequence is a subsequence of the previous, and $(h_{n_k^{(j)}}|_{\overline{B}(0,j)})_{k \in \N}$ is uniformly convergent, so $(h_{m_k}|_{\overline{B}(0,j)})_{k \in \N}$ is uniformly convergent. Hence, given a bounded set $\mathcal{K} \subset \R^d$, there exists $j \in \N$ such that $\mathcal{K} \subseteq \overline{B}(0,j),$ and so $(h_{m_k}|_\mathcal{K})_{k \in \N}$ is uniformly convergent to its pointwise limit $h|_\mathcal{K}.$

Finally, notice that for each $x,y \in \R^d$
$$||h(x)-h(y)|| = \bigg|\bigg|\lim_{k \to \infty} h_{m_k}(x) - \lim_{k \to \infty}h_{m_k}(y)\bigg|\bigg| = \lim_{k \to \infty}||h_{m_k}(x)-h_{m_k}(y)|| \leq  L||x-y||,$$ so $h$ is $L-$Lipschitz. 
\end{proof}

    \begin{lemma} \label{lemma: Y encodes rho} Let $\rho:\R^d \to \R$ with $0< \inf \rho \leq \sup \rho < \infty.$ Then there exists a Delone set $Y \subset \R^d$ such that $Y$ encodes $\rho,$ as in Definition \ref{defn: Y encoding rho}. Moreover, there exists $U_1 = U_1(\inf\rho) \in \N$ such that each closed cube with sidelength $U_1$ and vertices in $U_1(\Z-\frac{1}{2})^d$ contains at most $U_1^d\lceil \sup \rho \rceil$ points of $Y.$
    \end{lemma}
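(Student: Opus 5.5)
We build $Y$ directly as a union of dyadic-shell ``approximations'' of $\rho$ at growing scales. Fix a sequence of scales $T_n\to\infty$ with $T_n$ integers growing fast (say $T_{n+1}\geq 4T_n^2$), and a unit $U_1\in\N$ chosen with $U_1^d\inf\rho\geq 1$. This choice guarantees that every cube of side $U_1$ receives at least one point, which gives relative density. Tile $\R^d$ by the closed cubes $Q$ of sidelength $U_1$ with vertices in $U_1(\Z-\frac12)^d$. For a cube $Q$ we assign a number of points $N(Q)\in\N$ with $1\leq N(Q)\leq U_1^d\lceil\sup\rho\rceil$, and place them as a sub-grid of $Q$: points of $(U_1/k)\Z^d$ shifted into the interior, where $k=\lceil N(Q)^{1/d}\rceil$, so $N(Q)\le k^d \le (U_1\lceil\sup\rho\rceil^{1/d}+1)^d$ sits inside $Q$ and is kept at distance $\geq U_1/(2k)$ from $\partial Q$. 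Then all points of $Y$ are separated by a uniform $s>0$, since $k$ is bounded by a constant depending only on $U_1,\sup\rho$, and every ball of radius $\sqrt d\,U_1$ meets a point. This yields a Delone set for any admissible choice of the counts, and also gives the ``moreover'' bound directly.

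The counts encode $\rho$ by scale annuli. Let $A_n\coloneqq \{x: T_{n-1}^2\le \|x\|_\infty< T_n^2\}$, or more simply assign each cube $Q$ to the unique $n$ with $Q\subset B_\infty(0,T_{n}\, n)\setminus B_\infty(0,T_{n-1}(n-1))$, chosen so the regions $T_n\cdot[-n,n]^d$ exhaust $\R^d$ in nested fashion. For $Q$ in region $n$ set $N(Q)\coloneqq \big\lfloor \int_Q \rho(x/T_n)\,dx\big\rfloor$ corrected to lie in $[1,U_1^d\lceil\sup\rho\rceil]$. Flooring costs at most one point per cube, so we instead use an error-diffusion rounding: enumerate cubes and round cumulatively. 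This makes the discrepancy $|\sum_{Q\subset P}N(Q)-\int_P\rho(x/T_n)dx|$ at most $O(\#\text{boundary cubes of }P)$ for any union $P$ of cubes, rather than the number of cubes in $P$. Rescaling by $1/T_n$, the cubes have side $U_1/T_n\to0$ and region $n$ contains $[-n,n]^d$ minus the ball of radius $(n-1)T_{n-1}/T_n\to 0$.

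To verify the vague convergence, fix $\Psi\in C_c$ with support in $[-M,M]^d$. For $n>M$, $\int\Psi\,d\mu_n=T_n^{-d}\sum_{y\in Y}\Psi(y/T_n)$. Points from earlier regions lie in a cube of side $2(n-1)T_{n-1}$ and contribute $O\big((nT_{n-1}/T_n)^d\big)\|\Psi\|_\infty\to0$. The remaining points lie in region-$n$ cubes, and we replace $\Psi(y/T_n)$ by its value at the cube center, which costs $o(1)$ by uniform continuity since there are $O(T_n^d)$ relevant cubes each carrying a bounded number of points. It remains to compare $T_n^{-d}\sum_Q N(Q)\Psi(c_Q/T_n)$ with $\int\Psi\rho$. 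Each $N(Q)$ is within $1$ of $\int_Q\rho(x/T_n)dx$, so the naive error is $O(T_n^{-d}\cdot T_n^d U_1^{-d})$, which does \emph{not} vanish. This is the main obstacle, and it is why plain rounding fails. I would resolve it with the cumulative rounding above, grouping the cubes into blocks of side $\sqrt{T_n}$ (in unrescaled units) on which $\Psi(\cdot/T_n)$ is nearly constant. Within a block, the total count matches the integral up to $O(1)$ by cumulative rounding done blockwise. There are $O(T_n^{d/2})$ blocks, so the error is $O(T_n^{-d/2})\to0$. The integral term then converges to $\int\Psi\rho$ by a change of variables plus a Riemann sum argument, using continuity (or Lebesgue differentiation for merely measurable $\rho$, with $\rho(\cdot/T_n)$ averaged over blocks, which converges in $L^1_{loc}$ after rescaling, exactly by the identity $\int_{P}\rho(x/T_n)dx=T_n^d\int_{P/T_n}\rho$). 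Finally, the lower bound $N(Q)\ge1$ is compatible with cumulative rounding because $\int_Q\rho(x/T_n)\ge U_1^d\inf\rho\ge1$.
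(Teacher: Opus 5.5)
Your proposal is correct and is essentially the paper's construction: $U_1$-cubes (with $U_1$ depending only on $\inf\rho$) each receive between $1$ and $U_1^d\lceil\sup\rho\rceil$ points, which gives the Delone property and the counting bound, and annular regions at scale $T_n$ encode $\rho(\cdot/T_n)$. As in the paper, the rounding error is paid only once per intermediate block, so the total error is $o(T_n^d)$: the paper floors once per cube of $\mathcal{P}_n$, of side $U_n=T_n/(2n-1)$, while you round cumulatively on blocks of side about $\sqrt{T_n}$. The differences are minor: you check vague convergence by a direct Riemann-sum estimate instead of citing \cite[Proposition~4.1]{bhat2025fast}, and since your region boundaries need not align with the $U_1$-grid you should assign cubes (and blocks, which should be unions of $U_1$-cubes) to regions by their centres.
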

    \begin{proof}
        Define $(U_n)_{n \in \N}$ with $U_1 = 2\lceil (\frac{2}{\inf \rho})^\frac{1}{d}\rceil+1$ and 
    \begin{equation*}
        U_{n+1} = (2n-1)^2U_n, \qquad n \in \N.
    \end{equation*} Notice $U_n \xrightarrow{n \to \infty} \infty$, and since $U_1$ is odd, it follows $U_n$ is odd for every $n \in \N$. 
    Then define the sequence $(T_n)_{n \in \N}$ by $$T_n = (2n-1)U_n,\qquad n \in \N.$$  
    Let $C_0 \coloneqq \varnothing$ and $$C_n \coloneqq \left[-\tfrac{1}{2}U_{n},\tfrac{1}{2}U_{n}\right]^d, \qquad n \in \N.$$
    Now set $\mathcal{P}_0\coloneqq \{C_1\}$ and for each $n \in \N,$ denote by $\mathcal{P}_n$ the standard partition of $C_{n+1}$ into closed cubes of sidelength $U_n$ with vertices in ${\{-\frac{1}{2}}U_{n+1}\}^d+U_n\Z^d$.  Notice for each $n \in \N$ that $C_n \in \mathcal{P}_n$. Indeed, the sidelength of $C_n$ is $U_n,$ and since there are an odd number of cubes in $\mathcal{P}_n$, namely $(\frac{U_{n+1}}{U_n})^d = ((2n-1)^2)^d,$ there is a central cube and it must coincide with $C_n$ since both $C_n$ and $C_{n+1}$ have centre $\textbf{0}$. Thus, we see that $\bigcup_{n \in \N \cup \{0\}} (\mathcal{P}_n\setminus \{C_n\})$ is a partition of $\R^d$ whose sets have pairwise disjoint interiors.
    
    Now we construct the set $Y \subset \R^d$ as follows. For each $n \in \N \cup \{0\}$ and each cube $R \in \mathcal{P}_n \setminus \{C_n\}$, let $\{R_i\}_{i \in [{U_n}^d{U_1}^{-d}]}$ be the standard partition of $R$ into ${U_n}^d{U_1}^{-d}$ closed cubes with sidelength $U_1$ and vertices in $U_1(\Z-\frac{1}{2})^d,$ enumerated with an arbitrary ordering. Let $l \coloneqq {\lceil \sup \rho}\rceil^{-\frac{1}{d}}$, and for each $i \in [{U_n}^d{U_1}^{-d}]$, let $\{R_{i,j}\}_{j\in [U_1^dl^{-d}]}$ be the standard partition of $R_i$ into $U_1^dl^{-d}$ closed cubes of sidelength $l$, enumerated with an arbitrary ordering. By virtue of choosing $U_1 \geq (\frac{2}{\inf \rho})^\frac{1}{d}$ and noting $U_n \geq U_1$, we have that
    \begin{equation} \label{eq: Un U1}
        \bigg|\{R_i\}_{i \in [{U_n}^d{U_1}^{-d}]}\bigg|  =\tfrac{U_n^d}{U_1^d} \leq U_n^d \tfrac{\inf \rho}{2} \leq \lfloor U_n^d \inf \rho\rfloor.
    \end{equation} 
    Since $l^{-d} = \lceil \sup \rho\rceil$,
we can choose a collection of closed cubes $\mathcal{Q} \subseteq \{R_{i,j}\}_{i\in [{U_n}^d{U_1}^{-d}],j \in [l^{-d}]}$ with cardinality 
\begin{equation} \label{eq: cardinality of Q}
    |\mathcal{Q}| = \left\lfloor T_n^d\int_{\tfrac{1}{T_n}R}\rho \; d\mathcal{L}\right\rfloor \in [\lfloor U_n^d \inf \rho\rfloor, U_n^d \lceil \sup \rho \rceil].
\end{equation}
By \eqref{eq: Un U1} and \eqref{eq: cardinality of Q}, we have that $|\{R_i\}_{i \in [{U_n}^d{U_1}^{-d}]}| \leq \lfloor {U_n}^d \inf \rho\rfloor \leq|\mathcal{Q}|$, so we can further impose that for any $i \in [{U_n}^d{U_1}^{-d}]$ there exists $Q \in \mathcal{Q}$ such that $Q \subseteq R_i.$
    Then into each cube in $\mathcal{Q}$, place one point at its centre. This completes the construction of the set $Y.$
    
    Due to the two conditions for choosing $\mathcal{Q},$ notice for each $n \in \N \cup \{0\}$ and each cube $R \in \mathcal{P}_n \setminus \{C_n\}$ that 
    \begin{equation} \label{eq: up and down Ri Y}
        1 \leq |R_i \cap Y| \leq \bigg|\{R_{i,j}\}_{j \in [U_1^dl^{-d}]}\bigg| = U_1^dl^{-d} =U_1^d\lceil \sup \rho \rceil, \qquad i \in [{U_n}^d{U_1}^{-d}].
    \end{equation} 
   Since $\bigcup_{n \in \N \cup \{0\}} (\mathcal{P}_n\setminus \{C_n\})$ is a partition of $\R^d,$ then $\bigcup_{n \in \N \cup \{0\}}\bigcup_{R \in \mathcal{P}_n\setminus \{C_n\}}\{R_i\}_{i \in [{U_n}^d{U_1}^{-d}]}$ is also a partition of $\R^d,$ which by its definition consists closed cubes with sidelength $U_1$ and vertices in $U_1(\Z-\frac{1}{2})^d.$ By the second inequality of \eqref{eq: up and down Ri Y}, each of these cubes contains at most $U_1^d\lceil \sup\rho\rceil$ points of $Y$, as required.
   The first inequality of \eqref{eq: up and down Ri Y} further gives that each of these cubes contains at least one point of $Y$, so $Y$ has covering radius at most $U_1\sqrt{d}.$
   Moreover, since the sets $R_i$ partition $\R^d$ and each such set is further partitioned into smaller cubes $\{R_{i,j}\}_{j \in U_1^dl^{-d}},$ we have that $\bigcup_{n \in \N \cup \{0\}} \bigcup_{R \in \mathcal{P}_n \setminus \{\mathcal{C}_n\}}\bigcup_{i \in [U_n^dU_1^{-d}]}\{R_{i,j}\}_{j \in [U_1^dl^{-d}]}$ is a partition of $\R^d.$ Each cube $R_{i,j}$ has sidelength $l$ and either contains no point of $Y$, or it contains exactly one point of $Y$, positioned at its centre. Thus, $Y$ is $l-$separated, i.e., $Y$ has packing radius at least $\frac{l}{2}.$ Therefore, $Y$ is a Delone set.

   We now show $Y$ encodes $\rho.$
To this end, we will show the sequence of measures $(\mu_n)_{n \in \N}$ given by $$\mu_n(A) \coloneqq \tfrac{1}{T_n^d}|\tfrac{1}{T_n}Y \cap A|, \qquad A \subseteq \R^d, \qquad n \in \N$$  converges vaguely to $\rho \leb$ by showing $(\mu_n|_{[-\frac{N}{2},\frac{N}{2}]^d})_{n \in \N}$ converges weakly to $\rho\leb|_{[-\frac{N}{2},\frac{N}{2}]^d}$ for each odd natural number $N$. Fix an odd $N \in \N$ and invoke \cite[Proposition $4.1$]{bhat2025fast} with $\mathcal{K} = [-\frac{N}{2},\frac{N}{2}]^d,$  $\nu = \rho\leb|_{\mathcal{K}},$ and for each $n \geq \frac{N+1}{2},$ $\nu_n = \mu_n|_{[-\frac{N}{2},\frac{N}{2}]^d},$ and take $\mathcal{Q}_n$ to be the collection of $N^d(2n-1)^d$ closed cubes in the standard partition of $\mathcal{K}$ with sidelength $\frac{1}{2n-1}$ and vertices in $\{-\frac{N}{2}\}^d + \frac{1}{2n-1}\Z^d.$ 
Notice $\tfrac{1}{T_n}C_n \coloneqq  [-\frac{1}{2}\frac{U_n}{T_n}, \frac{1}{2}\frac{U_n}{T_n}]^d$ is in $\mathcal{Q}_n.$ Indeed, $\tfrac{1}{T_n}C_n$ has sidelength $\frac{U_{n}}{T_n} = \frac{1}{2n-1},$ and its centre is $\textbf{0}$ which coincides with the centre of $\mathcal{K}.$ Since $\mathcal{Q}_n$ contains an odd number of cubes, $N^d(2n-1)^d,$ it has a central cube which must coincide with $\tfrac{1}{T_n}C_n.$
As in the statement of \cite[Proposition $4.1$]{bhat2025fast}, take $\widetilde{\mathcal{Q}_n} \coloneqq \mathcal{Q}_n \setminus \{\tfrac{1}{T_n}C_n\}.$ The first and third conditions of \cite[Proposition $4.1$]{bhat2025fast} are clear. For the second condition, the Lebesgue measure of $\tfrac{1}{T_n}C_n$ is
\begin{equation*}
    \leb\left(\tfrac{1}{T_n}C_n\right) = \left(\tfrac{U_{n}}{T_n}\right)^d =  \left(\tfrac{1}{2n-1}\right)^d \xrightarrow{n \to \infty} 0 .
\end{equation*}Therefore, $\rho\leb(\tfrac{1}{T_n}C_n) \leq \sup \rho\cdot \leb(\tfrac{1}{T_n}C_n) \xrightarrow{n \to \infty} 0.$ Moreover, since for each $n \in \N$ we have that $C_n$ can be partitioned into $U_n^dU_1^{-d}$ cubes of sidelength $U_1$ with vertices in $U_1(\Z-\frac{1}{2})^d,$ and each such cube contains at most $U_1^d\lceil\sup \rho\rceil$ points of $Y$
\begin{equation} \label{eq: mun bounded}
    \nu_n\left(\tfrac{1}{T_n}C_n\right) = \tfrac{1}{T_n^d}\bigg|\tfrac{1}{T_n}Y \cap \tfrac{1}{T_n}C_n\bigg|= \tfrac{1}{T_n^d}|Y \cap C_n| \leq \tfrac{1}{T_n^d} U_1^{-d}{U_n^d}U_1^d\lceil\sup \rho\rceil \xrightarrow{n \to \infty} 0.
\end{equation}
It remains to verify the final condition of \cite[Proposition $4.1$]{bhat2025fast}. Henceforth, fix $n \geq \frac{N+1}{2}$ and $Q \in \widetilde{\mathcal{Q}_n}$. Notice that $T_nQ \in \mathcal{P}_n.$ Indeed, the sidelength of $T_nQ$ is $\frac{T_n}{2n-1} = U_n,$ and since $n \geq \frac{N+1}{2}$
$$T_nQ \subset \left[-\tfrac{NT_n}{2},\tfrac{NT_n}{2}\right]^d \subseteq\left[-\tfrac{(2n-1)T_n}{2},\tfrac{(2n-1)T_n}{2}\right]^d =\left[-\tfrac{1}{2}U_{n+1},\tfrac{1}{2}U_{n+1}\right]^d.$$
Moreover, the vertices of $T_nQ$ belong to $\{-\frac{1}{2}U_{n+1}\}^d+U_n\Z^d$. Indeed, $Q$ has vertices in $\{-\tfrac{N}{2}\}^d+\tfrac{1}{2n-1}\Z^d$, and so $T_nQ$ has vertices in 
$$T_n\left(\{-\tfrac{N}{2}\}^d+\tfrac{1}{2n-1}\Z^d\right) = \{-\tfrac{NT_n}{2}\}^d+{U_n}\Z^d.$$ Since $N$ is odd 
$$-\tfrac{NT_n}{2} + \tfrac{1}{2}U_{n+1}=-\tfrac{NT_n}{2} + \tfrac{(2n-1)T_n}{2} = T_n\tfrac{2n-1-N}{2}=U_n(2n-1)\tfrac{2n-1-N}{2} \in U_n\Z,$$ and hence $\{-\frac{NT_n}{2}\}^d+{U_n}\Z^d = \{-\frac{1}{2}U_{n+1}\}^d+U_n\Z^d,$ as required.
Thus, $T_nQ \in \mathcal{P}_n,$ and so we have that
\begin{equation*}
    \begin{split}
        \nu_n(Q) &= \tfrac{1}{T_n^d}\bigg|\tfrac{1}{T_n}Y \cap Q\bigg| =\tfrac{1}{T_n^d}\bigg|Y \cap T_nQ\bigg|
        = \tfrac{1}{T_n^d}\bigg\lfloor T_n^d\int_{\tfrac{1}{T_n}T_nQ}\rho\; d\leb\bigg\rfloor \leq \tfrac{1}{T_n^d}\bigg(T_n^d\int_{Q}\rho\; d\leb\bigg) = \rho\leb(Q),
    \end{split}
\end{equation*} and 
\begin{equation*}
    \nu_n(Q) = \tfrac{1}{T_n^d}\bigg\lfloor T_n^d\int_{Q}\rho\; d\leb\bigg\rfloor \geq \tfrac{1}{T_n^d}\left(T_n^d\int_{Q}\rho \; d\leb-1\right)=\rho\leb(Q) - \tfrac{1}{T_n^d}.
\end{equation*} Therefore,
\begin{equation*}
    |\nu_n(Q) - \rho\leb(Q)| \leq \tfrac{1}{T_n^d} \in o\left(\tfrac{1}{N^d(2n-1)^d}\right) = o\left(\tfrac{1}{|\mathcal{Q}_n|}\right).
\end{equation*} 
We have now verified all conditions of \cite[Proposition $4.1$]{bhat2025fast} for $(\nu_{n})_{n\in\N}$, $\nu$, $(\mathcal{Q}_{n})_{n\in\N}$ and $(\widetilde{\mathcal{Q}_{n}})_{n\in\N}$. Thus, for each odd $N \in \N$, $\mu_n|_{[-\frac{N}{2},\frac{N}{2}]^d}$ converges weakly to $\rho\leb|_{[-\frac{N}{2}, \frac{N}{2}]^d},$ and so by Proposition \ref{prop: vague weak prop}, $\mu_n$ converges vaguely to $\rho \leb.$ 
    \end{proof}
    
\maintheorem* 
\begin{proof} Fix a measurable density $\rho:\R^d \to \R$ with $0 < \inf \rho \leq \sup \rho < \infty.$ By Lemma \ref{lemma: Y encodes rho}, there exists a Delone set $Y \subset \R^d$ which encodes $\rho.$ In other words, there exists a sequence of positive numbers $(T_n)_{n \in \N} \to \infty$ such that the sequence of measures $(\mu_n)_{n \in \N}$ given by 
$$\mu_n(A) \coloneqq \tfrac{1}{T_n^d} \bigg|\tfrac{1}{T_n}Y \cap A\bigg|, \qquad  A \subseteq \R^d, \qquad \qquad n \in \N$$
converges vaguely to $\rho \leb.$
By assumption, there exists a Lipschitz, pointwise co-Lipschitz bijection $g:D \to E,$ where $D = Y$ and $E = \Z^d,$ or $D = \Z^d$ and $E = Y.$ While these two cases have meaningful differences, there are many similarities for the remainder of the proof, so we will manage both concurrently.

Let $L \coloneqq \text{Lip}(g).$ By Kirszbraun's extension theorem \cite{kirszbraun1934zusammenziehende}, we can extend $g$ to an $L-$Lipschitz map $\bar{g}:\R^d \to \R^d.$  Define the sequence of maps ${f_n:\R^d \to \R^d}$ as
\begin{equation*}
    f_n(x) \coloneqq \tfrac{1}{T_n}\bar{g}(T_nx).
\end{equation*} For each $n \in \N,$ $f_n$ is $L-$Lipschitz since for each pair $x,y \in \R^d$
\begin{equation*}
    ||f_n(x)-f_n(y)|| = \tfrac{1}{T_n}||\bar{g}(T_nx)-\bar{g}(T_ny)|| \leq \tfrac{1}{T_n}L||T_nx-T_ny|| = L||x-y||.
\end{equation*}We claim that $||f_n(0)|| \xrightarrow{n \to \infty} 0$. Fix $a \in D.$
    For each $n \in \N$, notice that $f_n(x) = \tfrac{1}{T_n}g(T_nx)$ on $\tfrac{1}{T_n}D.$ Let $S>0$ be the covering radius of $D$. Since there exists a point $x_n \in \tfrac{1}{T_n}D$ such that $||x_n-0|| = ||x_n|| \leq \tfrac{1}{T_n}S,$ we have 
    \begin{equation*}
        \begin{split}
            ||f_n(0)|| &\leq \bigg|\bigg|f_n(x_n)-\tfrac{1}{T_n}g(a)\bigg|\bigg| +\tfrac{1}{T_n}||g(a)|| + ||f_n(x_n)-f_n(0)||  \leq \tfrac{1}{T_n}(L||T_nx_n-a|| + ||g(a)|| + T_nL||x_n||)
            \\
            & \leq \tfrac{1}{T_n}(L(2S+||a||) + ||g(a)||) \xrightarrow{n \to \infty} 0.
        \end{split}
    \end{equation*} Hence, by Lemma \ref{lemma: lip extension on Rd}, there exists a $L-$Lipschitz map $f:\R^d \to \R^d$ and a strictly increasing sequence $(m_k)_{k \in \N} \subseteq \N$ such that for each bounded set $\mathcal{K} \subset \R^d$, $(f_{m_k}|_{\mathcal{K}})_{k \in \N}$ converges uniformly to $f|_{\mathcal{K}}$. Passing to a subsequence and relabelling, we assume henceforth that $(m_k)_{k \in \N} = \N.$

Define the sequence of counting measures $(\lambda_n)_{n \in \N}$ by
$$\lambda_n(A) \coloneqq \tfrac{1}{T_n^d} \bigg|\tfrac{1}{T_n}\Z^d \cap A\bigg|, \qquad A \subseteq \R^d.$$ It is clear that $(\lambda_n)_{n \in \N}$ converges weakly to $\leb.$
    \begin{claim} Taking $D = Y$ and $E = \Z^d,$ we have that
         $(f_n\#\mu_n)_{n \in \N}$ converges vaguely to $f \# (\rho \leb).$ Taking $D = \Z^d$ and $E = Y,$ we have that $(f_n\#\lambda_n)_{n \in \N}$ converges vaguely to $f \# \leb.$
    \end{claim}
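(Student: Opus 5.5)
The plan is to apply Lemma~\ref{lemma: pushforward measures vaguely converging} in both cases, with $h_n \coloneqq f_n$ and $h \coloneqq f$. In the first case $\nu_n \coloneqq \mu_n$ and $\nu \coloneqq \rho\leb$; in the second $\nu_n \coloneqq \lambda_n$ and $\nu \coloneqq \leb$. Several hypotheses are already available. The $f_n$ are continuous (indeed $L$-Lipschitz) and converge uniformly to $f$ on bounded sets by the choice of subsequence. The vague convergence $\mu_n\to\rho\leb$ holds because $Y$ encodes $\rho$ (Lemma~\ref{lemma: Y encodes rho}). The convergence $\lambda_n\to\leb$ is weak, hence vague. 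It remains to verify the two technical conditions \eqref{eq: lemma condition 1} and \eqref{eq: lemma condition 2}.

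For \eqref{eq: lemma condition 1}, fix a bounded $\mathcal{K}\subseteq B(0,r)$. Then $T_n^{d}\nu_n(\mathcal{K})$ counts the points of $Y$ (respectively $\Z^d$) lying in $B(0,rT_n)$. For $\Z^d$ this is at most $C(r T_n+1)^d$. For $Y$, the uniform discreteness of $Y$ (packing radius at least $l/2$), or alternatively the cube count of Lemma~\ref{lemma: Y encodes rho}, gives at most $C'(rT_n+U_1)^d$ points. Dividing by $T_n^d$ gives a bound uniform in $n$, since $T_n\ge 1$.

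The main obstacle is \eqref{eq: lemma condition 2}, a uniform properness of the rescaled maps. This is exactly where the pointwise co-Lipschitz hypothesis enters. Fix $a\in D$. By Proposition~\ref{thm: bilip}, $g$ is co-Lipschitz at $g(a)$, so there is $\beta>0$ with $\|g(x)-g(a)\|\ge\beta\|x-a\|$ for all $x\in D$. I would first handle lattice points $x\in\frac1{T_n}D$:
\[
\|f_n(x)\|\ge \beta\|x\| - \tfrac1{T_n}\bigl(\beta\|a\|+\|g(a)\|\bigr).
\]
For a general point $z\in\R^d$, choose $x\in\frac1{T_n}D$ with $\|x-z\|\le S/T_n$, where $S$ is the covering radius of $D$. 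The $L$-Lipschitz property of $f_n$ then gives
\[
\|f_n(z)\|\ge\beta\|z\|-\tfrac{1}{T_n}\bigl(\beta S+LS+\beta\|a\|+\|g(a)\|\bigr)\ge \beta\|z\|-C_0,
\]
with $C_0$ independent of $n$ because $T_n\ge1$. Hence $f_n^{-1}(B(0,r))\subseteq B(0,(r+C_0)/\beta)$ for every $n$. Passing to the pointwise limit, $\|f(z)\|\ge\beta\|z\|$, so $f^{-1}(B(0,r))\subseteq \bar B(0,r/\beta)$. Taking $R\coloneqq (r+C_0)/\beta+1$ verifies \eqref{eq: lemma condition 2}.

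With all hypotheses checked, Lemma~\ref{lemma: pushforward measures vaguely converging} yields $f_n\#\mu_n\to f\#(\rho\leb)$ when $D=Y$, and $f_n\#\lambda_n\to f\#\leb$ when $D=\Z^d$, both vaguely. The only subtle point is that the co-Lipschitz constant $\beta$ is taken at a single base point, and scaling by $1/T_n$ makes the additive error uniform. I would therefore state the lower bound explicitly before invoking the lemma.
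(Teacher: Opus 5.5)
Your proposal is correct and follows essentially the same route as the paper. You apply Lemma~\ref{lemma: pushforward measures vaguely converging} with the same $h_n,h,\nu_n,\nu$, bound $\nu_n(\mathcal{K})$ by counting points of $Y$ or $\Z^d$ in a ball of radius comparable to $T_n$, and get \eqref{eq: lemma condition 2} from the co-Lipschitz bound at a single point combined with the covering radius, the Lipschitz extension and the $1/T_n$ rescaling.

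The differences are cosmetic. You take an arbitrary base point $a\in D$ rather than the paper's $a=g^{-1}(0)$, which has the small advantage of not needing $0\in E$ when $E=Y$. You also pass $\|f_n(z)\|\ge\beta\|z\|-C_0/T_n$ directly to the limit, where the paper argues via $f_N^{-1}(B(0,r+1))$.
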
 
\begin{proof} Apply Lemma \ref{lemma: pushforward measures vaguely converging} with $h_n = f_n$ and $h = f,$ and when $D = Y$ and $E = \Z^d,$ we put $\nu_n = \mu_n$ and $\nu = \rho\leb,$ while when $D = \Z^d$ and $E = Y,$ we put $\nu_n = \lambda_n$ and $\nu = \leb.$ It remains to check conditions \eqref{eq: lemma condition 1} and \eqref{eq: lemma condition 2} from Lemma \ref{lemma: pushforward measures vaguely converging}. For condition \eqref{eq: lemma condition 1}, let $\mathcal{K}\subset \R^d$ be a bounded set. Then there exists a constant $C=C(\mathcal{K}) >0$ such that $||x||_\infty <C$ for each $x \in \mathcal{K}.$ 
Define the sequence of sets $(\mathcal{K}_n)_{n \in \N} \subseteq \R^d$ by
\begin{equation*}
    \mathcal{K}_n \coloneqq [-T_nC,T_nC]^d, \qquad n \in \N.
\end{equation*} 
Notice for each $n \in \N$ that $\mathcal{K} \subset [-C,C]^d = \frac{\mathcal{K}_n}{T_n}.$
Now by Lemma \ref{lemma: Y encodes rho}, there exists $U_1 = U_1(\inf \rho) \in \N$ such that each closed cube of sidelength $U_1$ and vertices in $U_1(\Z-\frac{1}{2})^d$ contains at most $\lceil \sup \rho \rceil$ points of $Y.$ 
For each $n \in \N,$ since the sidelength of $\mathcal{K}_n$ is $2T_nC$, notice we can cover $\mathcal{K}_n$ with at most $(\frac{2T_nC}{U_1}+2)^d$ closed cubes with sidelength $U_1$ and vertices in $U_1(\Z-\frac{1}{2})^d,$ 
and we can also cover $\mathcal{K}_n$ with at most $(2T_nC +2)^d$ closed cubes with sidelength $1$ and vertices in $(\Z-\frac{1}{2})^d.$ Each cube in the second covering contains exactly one point of $\Z^d.$ Moreover, since $(T_n)_{n \in \N}$ is a sequence of positive numbers going to $\infty,$ it is clear that $\inf T_n >0.$
Hence, for each $n \in \N$ 
$$\mu_n(\mathcal{K}) \leq \mu_n\left(\tfrac{\mathcal{K}_n}{T_n}\right)= \tfrac{1}{T_n^d}|\tfrac{Y}{T_n} \cap \tfrac{\mathcal{K}_n}{T_n}| =\tfrac{1}{T_n^d}|Y \cap \mathcal{K}_n| \leq \tfrac{1}{T_n^d}\left(\left(\tfrac{2T_nC}{U_1}+2\right)^d\lceil\sup \rho\rceil \right) \leq \left(\tfrac{2C}{U_1}+\tfrac{2}{\inf T_n}\right)^d  \lceil \sup\rho\rceil ,$$ and
$$\lambda_n(\mathcal{K}) \leq \lambda_n\left(\tfrac{\mathcal{K}_n}{T_n}\right)= \tfrac{1}{T_n^d}\bigg|\tfrac{\Z^d}{T_n} \cap \tfrac{\mathcal{K}_n}{T_n}\bigg| =\tfrac{1}{T_n^d}|\Z^d \cap \mathcal{K}_n|\leq \tfrac{1}{T_n^d}\left(2T_nC+2\right)^d \leq \left(2C+\tfrac{2}{\inf T_n}\right)^d .$$
For condition \eqref{eq: lemma condition 2}, we proceed without distinguishing between the different cases of $D$ and $E$. Recall $g: D\to E$ is a pointwise co-Lipschitz bijection. Let $a =g^{-1}(0)$. Then since $g$ is co-Lipschitz at $0,$ there exists $\beta>0$ such that $$||g(y)|| \geq \beta||y-a|| \qquad\text{for each}\; y \in D.$$ 
Fix $r > 0.$ Let $x \in g^{-1}(B(0,r)).$ Then $||g(x)|| < r,$ so $\beta||x-a|| <r,$ and so $x \in B(a, \frac{r}{\beta}) \subseteq B(0, ||a||+\frac{r}{\beta}).$ Therefore,
\begin{equation} \label{eq: preimage g}
    g^{-1}(B(0,r)) \subseteq B\left(0, ||a|| + \tfrac{r}{\beta}\right), \qquad r>0.
\end{equation}
Next fix $r>0$ and consider $x \in \overline{g}^{-1}(B(0,r)).$ By the relative density of $D,$ there exists a point $\widetilde{x} \in D$ such that $||x-\widetilde{x}|| \leq S.$ Then since $\overline{g}(\widetilde{x}) = g(\widetilde{x})$
$$||g(\widetilde{x})|| \leq ||\overline{g}(x) - \overline{g}(\widetilde{x})|| + ||\overline{g}(x)|| < L||x-\widetilde{x}|| + r \leq LS+r,$$
so by \eqref{eq: preimage g}, $||\widetilde{x}|| < ||a|| + \frac{r+LS}{\beta}.$ Consequently
$$||x|| \leq ||x-\widetilde{x}|| + ||\widetilde{x}|| < S + ||a|| + \tfrac{r+LS}{\beta},$$ so we have shown 
\begin{equation} \label{eq: preimage overline g}
    \overline{g}^{-1}(B(0,r)) \subseteq B\left(0, ||a|| + S + \tfrac{r+LS}{\beta}\right), \qquad r>0.
\end{equation}
Lastly, fix $r>0$ and $n \in \N.$ Then by \eqref{eq: preimage overline g}
\begin{equation} \label{eq: preimage fn}
    \begin{split}
        f_n^{-1}(B(0,r)) &= \{x \in \R^d: ||f_n(x)|| < r\} = \{x \in \R^d:\tfrac{1}{T_n}||\overline{g}(T_nx)|| <r\} = \tfrac{1}{T_n}\{y \in R^d:||\overline{g}(y)|| <T_nr\} \\
        &\subseteq \tfrac{1}{T_n}B\left(0,||a|| + S + \tfrac{T_nr+LS}{\beta}\right)\subseteq B\left(0, ||a|| +S+\tfrac{r+LS}{\beta}\right).
    \end{split}
\end{equation}
To complete the verification of condition \eqref{eq: lemma condition 2}, we show that $f^{-1}(B(0,r))$ is similarly contained. Fix $r>0$ and $x \in f^{-1}(B(0,r)).$ Since $(f_n)_{n \in \N}$ converges pointwise to $f$, there exists some $N=N_x \in \N$ such that $||f_N(x)-f(x)|| <1$ . Hence
\begin{equation*}
    ||f_N(x)|| \leq ||f(x)||+||f_N(x)-f(x)|| < r+1,
\end{equation*} so $x \in f_N^{-1}(B(0,r+1)).$ By \eqref{eq: preimage fn}, we therefore have that $x \in B(0,||a||+S+\frac{r+1+LS}{\beta}$), and so we obtain~\eqref{eq: lemma condition 2} with $R \coloneqq ||a||+S+\frac{r+1+LS}{\beta}$.
\end{proof}
Finally, we show the following claim which completes the proof of the theorem.
\begin{claim} Taking $D = Y$ and $E = \Z^d,$ we have that
    $(f_n \# \mu_n)_{n \in\N}$ converges weakly to $\leb.$ Taking $D = \Z^d$ and $E = Y,$ we have that $(f_n \# \lambda_n)_{n \in\N}$ converges vaguely to $\rho\leb.$
\end{claim}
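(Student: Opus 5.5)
The idea is to identify $f_n\#\mu_n$ (resp.\ $f_n\#\lambda_n$) explicitly as a rescaled counting measure on the image lattice set $E$. Then the claim reduces to the convergence of rescaled counting measures on $\Z^d$ (resp.\ on $Y$), which we already control.

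\textbf{Case $D=Y$, $E=\Z^d$.} For $A\subseteq\R^d$ we have
\[
f_n\#\mu_n(A)=\tfrac{1}{T_n^d}\bigl|\{y\in Y: \tfrac{1}{T_n}\bar g(y)\in A\}\bigr| .
\]
On $Y$ we have $\bar g=g$, and $g:Y\to\Z^d$ is a bijection. Hence the map $y\mapsto g(y)$ identifies $\{y\in Y:\tfrac{1}{T_n}g(y)\in A\}$ with $\Z^d\cap T_nA$. This gives
\[
f_n\#\mu_n(A)=\tfrac{1}{T_n^d}\bigl|\tfrac{1}{T_n}\Z^d\cap A\bigr|=\lambda_n(A)
\]
for all $A$. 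So $f_n\#\mu_n=\lambda_n$ exactly, and the statement reduces to the already-noted fact that $\lambda_n\to\leb$ weakly. If one insists on weak convergence on all of $\R^d$ with infinite measures, the meaning should be tested against $C_b$ functions in the same sense as asserted for $\lambda_n$. If this is delicate, I would instead prove vague convergence, which is all that is needed. That proof tests against $\Psi\in C_c$ and uses the Riemann-sum convergence $\tfrac{1}{T_n^d}\sum_{z\in\Z^d}\Psi(z/T_n)\to\int\Psi\,d\leb$, which follows from uniform continuity of $\Psi$ and compact support.

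\textbf{Case $D=\Z^d$, $E=Y$.} The same computation with $g:\Z^d\to Y$ a bijection gives
\[
f_n\#\lambda_n(A)=\tfrac{1}{T_n^d}\bigl|\{z\in\Z^d:g(z)\in T_nA\}\bigr|=\tfrac{1}{T_n^d}\bigl|Y\cap T_nA\bigr|=\mu_n(A).
\]
So $f_n\#\lambda_n=\mu_n$. By Lemma~\ref{lemma: Y encodes rho} and the choice of $(T_n)$ (which is the encoding sequence of $Y$), $\mu_n\to\rho\leb$ vaguely. This vague convergence persists along the subsequence $(m_k)$ we relabelled to $\N$, since subsequences of vaguely convergent sequences still converge to the same limit.

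\textbf{Conclusion and main obstacle.} The only care needed is to ensure that the relabelling of subsequences in the preceding argument is applied consistently to $T_n$, $\mu_n$ and $\lambda_n$ together; this is the main bookkeeping point rather than a true obstacle. Combining with the previous claim and uniqueness of vague limits then yields $f\#(\rho\leb)=\leb$, respectively $f\#\leb=\rho\leb$.
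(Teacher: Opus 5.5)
Your proposal is correct and follows the paper's proof: both establish the exact identities $f_n\#\mu_n=\lambda_n$ and $f_n\#\lambda_n=\mu_n$ from $\bar g|_D=g$ and the bijectivity of $g$, then conclude from the convergence of $\lambda_n$ to $\leb$ and from $Y$ encoding $\rho$. Your remark that only vague convergence makes sense, and is all that is needed, for the infinite measure $\leb$ (proved via Riemann sums against $C_c$ functions) is a fair tightening of the paper's looser phrase ``converges weakly''.
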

\begin{proof} Let $A \subseteq \R^d$. First consider $D =Y$ and $E = \Z^d$. Then 
    \begin{equation*}
    \begin{split}
       (f_n \# \mu_n)(A) = \mu_n(f_n^{-1}(A)) &= \tfrac{1}{T_n^d}|f_n^{-1}(A) \cap \tfrac{1}{T_n}Y|=\tfrac{1}{T_n^d}|({f_n}|_{\frac{1}{T_n}Y})^{-1}(A) \cap \tfrac{1}{T_n}Y| = \tfrac{1}{T_n^d}|A \cap \tfrac{1}{T_n}\Z^d|= \lambda_n(A) .
       \end{split}
    \end{equation*} Since $(\lambda_n)_{n \in \N}$ converges weakly to $\leb$, we have that $(f_n\#\mu_n)_{n \in \N}$ converges weakly to $\leb$. 

    Similarly, if $D = \Z^d$ and $E = Y,$ \begin{equation*}
    \begin{split}
       (f_n \# \lambda_n)(A) = \lambda_n(f_n^{-1}(A)) &= \tfrac{1}{T_n^d}|f_n^{-1}(A) \cap \tfrac{1}{T_n}\Z^d|= \tfrac{1}{T_n^d}|({f_n}|_{\frac{1}{T_n}\Z^d})^{-1}(A) \cap \tfrac{1}{T_n}\Z^d| = \tfrac{1}{T_n^d}|A \cap \tfrac{1}{T_n}Y|=\mu_n(A).
       \end{split}
    \end{equation*} 
    The set $Y$ is chosen to encode $\rho$ as in Definition \ref{defn: Y encoding rho}, i.e., $(\mu_n)_{n \in \N}$ converges vaguely to $\rho \leb,$ and so $(f_n \# \lambda_n)_{n \in \N}$ converges vaguely to $\rho \leb,$ as required.  
\end{proof}
By the uniqueness of vague limits, when $D = Y$ and $E = \Z^d,$ since $(f_n\#\mu_n)$ simultaneously converges vaguely to $f \#(\rho \leb)$ and $\leb$, we have that $f\#(\rho\leb) = \leb.$ Similarly, when $D = \Z^d$ and $E = Y,$ we have that $f \# \leb = \rho \leb.$ For each $A \subset \R^d,$ first taking $D = Y$ and $E = \Z^d$ $$\leb(A) = (f\#(\rho\leb))(A) = \int_{f^{-1}(A)}\rho \; d\leb \geq (\inf\rho) \leb(f^{-1}(A)),$$ and next taking $D = \Z^d$ and $E = Y$
$$f \# \leb(A) = \leb(f^{-1}(A)) = \rho\leb(A) = \int_{A} \rho \; d\leb \leq (\sup\rho) \leb(A).$$ Thus, we have that $\frac{\leb(f^{-1}(A))}{\leb(A)}$ is uniformly bounded, so $f$ is Lipschitz regular by \cite[Lemma~$12.6$]{david1997fractured} (or \cite[Lemma~$2.4$]{dymond2018mapping}).
\end{proof}

\section{Proof of Theorem \ref{thm: pushforward}} \label{chapter: 1.11}
In this section, we will show the existence of a `bad' density $\rho:\R^d \to \R$ as defined by point \ref{enumerate: bad rho}' of the modified pushforward method. We will use results from \cite{dymond2018mapping} regarding the property of porosity.
\begin{define}[\cite{article}, Definition $2.1$]
    Let $(X,||\cdot||)$ be a Banach space.
    \begin{enumerate}[(i)]
        \item A set $P \subseteq X$ is called \emph{porous} at a point $x \in X$ if there exist $\epsilon_0>0$ and $\alpha \in (0,1)$ such that for every $\epsilon \in (0, \epsilon_0)$ there exists $y \in X$ such that $$||y-x|| < \epsilon\qquad \text{and} \qquad B(y,\alpha\epsilon) \cap P = \varnothing.$$ 
        \item A set $P \subseteq X$ is called \emph{porous} if $P$ is porous at $x$ for every point $x \in P.$
        \item A set $E \subseteq X$ is called $\sigma-$porous if $E$ is the countable union of porous subsets of $X.$
    \end{enumerate}
\end{define}
For our use later, we note that the class of $\sigma-$porous subsets of a Banach space $X$ is strictly contained in the class of subsets of $X$ of the first category, in the sense of the Baire Category Theorem \cite{article}. As a result, the complement of a $\sigma-$porous set is dense in $X.$
\begin{lemma} \label{lemma: existence of bad rho}
    Let $d \in \N$, $\mathcal{K} \subset \R^d$ be a compact convex set and $P \subseteq C(\mathcal{K},\R)$ be a $\sigma-$porous set in $(C(\mathcal{K},\R),||\cdot||_{\infty})$. Then the set 
    \begin{equation*}
        E_P \coloneqq \{\rho \in C_b(\R^d,\R): \rho|_{\mathcal{K}} \in P\} 
    \end{equation*} is a $\sigma-$porous set in $(C_b(\R^d,\R),||\cdot||_{\infty}).$ 
\end{lemma}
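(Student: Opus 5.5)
Write $P=\bigcup_{k\in\N}P_k$ with each $P_k$ porous in $(C(\mathcal{K},\R),\|\cdot\|_\infty)$. Put $E_k\coloneqq\{\rho\in C_b(\R^d,\R):\rho|_{\mathcal{K}}\in P_k\}$. Then $E_P=\bigcup_k E_k$, so it suffices to show that each $E_k$ is porous in $(C_b(\R^d,\R),\|\cdot\|_\infty)$. The natural tool is the restriction map $\pi:C_b(\R^d,\R)\to C(\mathcal{K},\R)$, $\pi(\rho)=\rho|_{\mathcal{K}}$. It is linear and $1$-Lipschitz, and it has a bounded linear right inverse (an extension operator) that is also norm one. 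We have $E_k=\pi^{-1}(P_k)$, and porosity should transfer backwards through $\pi$.

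Fix $\rho\in E_k$, so $\pi(\rho)\in P_k$. Take $\epsilon_0>0$ and $\alpha\in(0,1)$ witnessing porosity of $P_k$ at $\pi(\rho)$. Given $\epsilon\in(0,\epsilon_0)$, choose $g\in C(\mathcal{K},\R)$ with $\|g-\pi(\rho)\|_\infty<\epsilon$ and $B(g,\alpha\epsilon)\cap P_k=\varnothing$. Let $h\coloneqq g-\pi(\rho)\in C(\mathcal{K},\R)$, so that $\|h\|_\infty<\epsilon$.

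Extend $h$ to $\tilde h\in C_b(\R^d,\R)$ with $\tilde h|_{\mathcal{K}}=h$ and $\|\tilde h\|_\infty=\|h\|_\infty$. One option is the Tietze extension theorem followed by truncation at $\pm\|h\|_\infty$. A more concrete option uses convexity and compactness of $\mathcal{K}$: the nearest-point projection $p_{\mathcal{K}}:\R^d\to\mathcal{K}$ is continuous (indeed $1$-Lipschitz), so we may set $\tilde h\coloneqq h\circ p_{\mathcal{K}}$. This is presumably why the lemma assumes $\mathcal{K}$ is convex. Now define $\sigma\coloneqq\rho+\tilde h\in C_b(\R^d,\R)$. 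Then $\|\sigma-\rho\|_\infty=\|h\|_\infty<\epsilon$ and $\pi(\sigma)=g$.

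It remains to check that $B(\sigma,\alpha\epsilon)\cap E_k=\varnothing$. If $\tau\in C_b(\R^d,\R)$ satisfies $\|\tau-\sigma\|_\infty<\alpha\epsilon$, then $\|\pi(\tau)-g\|_\infty\le\|\tau-\sigma\|_\infty<\alpha\epsilon$. Hence $\pi(\tau)\notin P_k$, and so $\tau\notin E_k$. Thus $E_k$ is porous at $\rho$ with the same constants $\epsilon_0$ and $\alpha$. Since $\rho\in E_k$ was arbitrary, $E_k$ is porous, and $E_P$ is $\sigma$-porous.

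The only point needing care is producing a bounded continuous extension with controlled sup norm. The projection onto the compact convex set $\mathcal{K}$ does this with no loss, and I do not expect any real obstacle beyond this.
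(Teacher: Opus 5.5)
Your proposal is correct and is essentially the paper's proof. The paper's function $\Psi$, equal to $\Phi$ on $\mathcal{K}$ and to $\rho + \Phi\circ\text{proj} - \rho\circ\text{proj}$ off $\mathcal{K}$, is exactly your $\sigma = \rho + h\circ p_{\mathcal{K}}$, and the paper then shows the $\alpha\epsilon$-ball about it misses $E_{P_n}$ by the same restriction argument, with the same constants $\epsilon_0$ and $\alpha$.
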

\begin{proof} As an important remark, $(C(\mathcal{K},\R), ||\cdot||_\infty)$ and $(C_b(\R^d,\R), ||\cdot||_\infty)$ are well-known Banach spaces. Now write $P = \bigcup_{n = 1}^\infty P_n$ where each set $P_n$ for $n \in \N$ is a porous subset of $C(\mathcal{K},\R)$. Consequently, we can write $$E_P = \bigg\{\rho \in C_b(\R^d,\R): \rho|_{\mathcal{K}} \in \bigcup_{n =1}^\infty{P_n}\bigg\} = \bigcup_{n =1}^\infty\{\rho \in C_b(\R^d,\R): \rho|_{\mathcal{K}} \in {P_n}\} = \bigcup_{n = 1}^\infty E_{P_n}.$$
    We will show $E_{P_n}$ is porous for each $n \in \N$. Fix $n \in \N$ and $\rho \in E_{P_n}.$ This means $\rho|_{\mathcal{K}} \in P_n$, so there exist $\epsilon_0>0$ and $\alpha \in (0,1)$ such that for all $\epsilon\in (0,\epsilon_0)$ there exists $\Phi =\Phi(\epsilon)\in C(\mathcal{K},\R)$ with
    \begin{equation} \label{eq: Phi condition}
        ||\rho|_{\mathcal{K}} - \Phi||_\infty< \epsilon \qquad \text{and} \qquad B_{(C(\mathcal{K},\R),||\cdot||_\infty)}(\Phi, \alpha \epsilon) \cap P_n = \varnothing.
    \end{equation}
   Fix $\epsilon \in (0,\epsilon_0),$ and let $\Phi \coloneqq \Phi(\epsilon)$ satisfy \eqref{eq: Phi condition}. Now define the function $\Psi:\R^d\to \R$ with the piecewise decomposition
    \begin{equation*}
        \Psi(x) =\begin{cases}
            \Phi(x), \qquad x \in \mathcal{K},\\
            \rho(x) + \Phi(\text{proj}(x)) - \rho(\text{proj}(x)), \qquad x \in \R^d \setminus \mathcal{K},
        \end{cases}
    \end{equation*} where $\text{proj}(x)$ is the Euclidean projection of $x$ onto $\mathcal{K},$ which is well-defined since $\mathcal{K}$ is convex \cite{bauschke2017convex}. Note we have for each $x \in \R^d$ that $|\Psi(x) - \rho(x)| < \epsilon.$ Indeed, for each $x \in \mathcal{K}$ we have that $$|\Psi(x)-\rho(x)| = |\Phi(x) - \rho(x)| \leq ||\Phi-\rho|_{\mathcal{K}}||_\infty < \epsilon,$$ and for each $x \in \R^d \setminus\mathcal{K}$ $$|\Psi(x)-\rho(x)| = |\Phi(\text{proj}(x)) - \rho(\text{proj}(x))| \leq  ||\Phi-\rho|_{\mathcal{K}}||_\infty < \epsilon.$$ This also gives us that $\Psi$ is bounded. Additionally, when $x \in \partial\mathcal{K}$ we see that $$\rho(x) + \Phi(\text{proj}(x)) - \rho(\text{proj}(x)) = \rho(x) + \Phi(x) - \rho(x) = \Phi(x),$$ and since $\Phi$ and $\rho$ are continuous, and the projection $\text{proj}(x)$ is onto a closed convex set and is thus continuous \cite[Proposition $4.8$]{bauschke2017convex}, we have that $\Psi$ is continuous. In other words, we have shown $\Psi \in C_b(\R^d,\R)$ satisfies $$||\rho-\Psi||_\infty < \epsilon.$$
    Finally, we will show that $$B_{(C_b(\R^d,\R),||\cdot||_\infty)}(\Psi, \alpha \epsilon) \cap E_{P_n} = \varnothing.$$ Let $\zeta \in C_b(\R^d,\R)$ such that $||\Psi-\zeta||_\infty < \alpha\epsilon.$ Then $$||\Phi-\zeta|_{\mathcal{K}}||_\infty =||\Psi|_{\mathcal{K}}-\zeta|_{\mathcal{K}}||_\infty < \alpha\epsilon,$$ so $\zeta|_\mathcal{K} \in B_{(C(\mathcal{K},\R),||\cdot||_\infty)}(\Phi, \alpha \epsilon).$ As a result, $\zeta|_\mathcal{K} \notin P_n$ and so $\zeta \notin E_{P_n},$ as required. Therefore, $E_{P_n}$ is porous, and $E_P = \bigcup_{n=1}^\infty E_{P_n}$ is $\sigma-$porous.
    \end{proof}
    
\begin{lemma} \label{lemma: porosity e}
    Let $d \in \N_{\geq 2}$ and $\mathcal{E} \coloneqq \{\rho \in C_b(\R^d,\R): \exists f:\R^d \to \R^d \;\text{Lipschitz regular with}\; f\#\rho\leb =\leb\}$. Then $\mathcal{E}$ is a $\sigma-$porous subset of $(C_b(\R^d,\R),||\cdot||_{\infty}).$
\end{lemma}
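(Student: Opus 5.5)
The plan is to reduce to the compact result of \cite{dymond2018mapping} via Lemma~\ref{lemma: existence of bad rho}, taking $\mathcal{K}=[0,1]^d$. By \cite[Theorem~4.1]{dymond2018mapping}, the set
\[
P\coloneqq\{\phi\in C([0,1]^d,\R): \exists\, h:[0,1]^d\to\R^d \text{ Lipschitz regular with } h\#(\phi\leb)=\leb|_{h([0,1]^d)}\}
\]
is $\sigma$-porous in $(C([0,1]^d,\R),\|\cdot\|_\infty)$. By Lemma~\ref{lemma: existence of bad rho}, the set $E_P=\{\rho\in C_b(\R^d,\R):\rho|_{[0,1]^d}\in P\}$ is then $\sigma$-porous in $C_b(\R^d,\R)$. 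Subsets of $\sigma$-porous sets are $\sigma$-porous, so it suffices to show $\mathcal{E}\subseteq E_P$.

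To prove this inclusion, take $\rho\in\mathcal{E}$ with a Lipschitz regular $f:\R^d\to\R^d$ satisfying $f\#(\rho\leb)=\leb$, and set $h\coloneqq f|_{[0,1]^d}$. Restricting a Lipschitz regular map keeps it Lipschitz, and preimages under $h$ are subsets of preimages under $f$, so $h$ is Lipschitz regular with $\mathrm{Reg}(h)\le \mathrm{Reg}(f)$. The remaining step, which I expect to be the main obstacle, is the identity $h\#(\rho|_{[0,1]^d}\leb)=\leb|_{h([0,1]^d)}$. In one direction, for $A\subseteq\R^d$,
\[
h\#(\rho\leb|_{[0,1]^d})(A)=\rho\leb\bigl(f^{-1}(A)\cap[0,1]^d\bigr)\le \rho\leb(f^{-1}(A))=\leb(A),
\]
and the left side vanishes outside $h([0,1]^d)$. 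This gives $h\#(\rho\leb|_{[0,1]^d})\le\leb|_{h([0,1]^d)}$.

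The reverse inequality fails in general, because points of $h([0,1]^d)$ may also have preimages outside the cube. I would therefore first check whether the version of \cite{dymond2018mapping} being used already allows an inequality, or a pushforward density bounded below (several of their porosity arguments only use that the pushforward has density bounded between two constants on an open set). If it does not, I would replace the cube by a small ball $B$ on which $f$ is injective up to null sets. Such a ball exists because a Lipschitz regular map has finite multiplicity and, by the decomposition into finitely many bi-Lipschitz pieces on an open set mentioned in the introduction, there is an open set on which $f$ is bi-Lipschitz. On that ball $f\#(\rho\leb|_B)=\leb|_{f(B)}$ holds exactly. After rescaling $B$ to $[0,1]^d$ (which preserves membership in $P$ up to constants), $\rho$ would lie in the $\sigma$-porous set obtained by taking a countable union over rational balls $B$ of the sets $E_{P_B}$. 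Each $E_{P_B}$ is $\sigma$-porous by Lemma~\ref{lemma: existence of bad rho} applied with $\mathcal{K}=\overline{B}$, which is compact and convex, and a countable union of $\sigma$-porous sets is $\sigma$-porous. This is the version I would commit to, since it avoids the multiplicity issue entirely.
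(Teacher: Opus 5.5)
There is a genuine gap, and it is in the step you commit to. You claim that on a ball $B$ where $f$ is bi-Lipschitz one has $f\#(\rho\leb|_B)=\leb|_{f(B)}$ exactly.

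\textbf{Why the ball step fails.} Injectivity of $f|_B$ is all that the decomposition of \cite[Proposition~2.15]{dymond2018mapping} gives you. It says nothing about other regions of $\R^d$ being mapped onto $f(B)$. For $A\subseteq f(B)$ you only get $f\#(\rho\leb|_B)(A)=\leb(A)-\rho\leb\bigl(f^{-1}(A)\setminus B\bigr)$. This is the same multiplicity defect you identified for the cube, just moved to a smaller set. Finite multiplicity does not give multiplicity one anywhere.

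Here is a concrete example. Let $\phi\colon\R\to\R$ satisfy $\phi(t+k)=\phi(t)+k$ for $k\in\Z$, with $\phi(t)=3t$, $2-3t$, $3t-2$ on $[0,\tfrac13]$, $[\tfrac13,\tfrac23]$, $[\tfrac23,1]$ respectively. Put $f(x)=(\phi(x_1),x_2,\dots,x_d)$. Then:
\begin{itemize}
\item $f$ is $3$-Lipschitz and Lipschitz regular, with $f\#\leb=\leb$ (so $\rho\equiv1$);
\item almost every point has three preimages;
\item for every ball $B$ on which $f$ is injective, $f\#(\leb|_B)=\tfrac13\leb|_{f(B)}$.
\end{itemize}
So neither a small ball nor a cube places you in the hypothesis of \cite[Theorem~4.1]{dymond2018mapping}, which demands equality on all of $h(\mathcal{K})$. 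Your inclusion argument must work with the given $f$, and for such $f$ it does not. Here $\rho\equiv1$ happens to lie in $P_B$ via another map, but that is not what your argument shows. Your first fallback, hoping that theorem tolerates an inequality, is left unverified, so it does not close the gap either.

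\textbf{The missing idea.} You need a porous family whose defining condition already accounts for all sheets over a small open set. That means going one level below \cite[Theorem~4.1]{dymond2018mapping}, to \cite[Lemma~4.7]{dymond2018mapping}. The paper proceeds as follows.
\begin{itemize}
\item Take an open $V$ with $f^{-1}(V)=Y_1\cup\dots\cup Y_N$, where $N\le\mathrm{Reg}(f)$ and each $f_i=f|_{Y_i}\colon Y_i\to V$ is bi-Lipschitz.
\item Lipschitz regularity makes $f^{-1}(V)$ bounded, so it lies in some $[-M,M]^d$.
\item A change of variables in $f\#(\rho\leb)=\leb$ on $V$ gives the multi-sheet identity $\rho=|\mathrm{Jac}(f_1)|-\sum_{i=2}^N(\rho\circ f_i^{-1}\circ f_1)|\mathrm{Jac}(f_i^{-1}\circ f_1)|$ a.e.\ on $Y_1$.
\item Densities satisfying this identity, with quantified $C$, $L$ and a basic open set for $Y_1$, form the sets $\mathcal{E}_{M,C,L,n}$. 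These are porous in $C([-M,M]^d,\R)$ by \cite[Lemma~4.7]{dymond2018mapping}.
\end{itemize}
From there your own scaffolding finishes the proof as in the paper: Lemma~\ref{lemma: existence of bad rho} with $\mathcal{K}=[-M,M]^d$, then a countable union over $M,C,L,n$.

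A minor point: a ball cannot be rescaled onto $[0,1]^d$, so you would have used small cubes anyway.
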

\begin{remark*}
    For $\rho\in C_{b}(\R^{d},\R)$ attaining negative values, we interpret $f\#(\rho\leb)$ as the signed measure given by $f\#(\rho\leb)=f\#(\rho^{+}\leb)-f\#( \rho^{-}\leb)$, as in \cite{dymond2018mapping}. Here, $\rho^{+}$ and $\rho^{-}$ stand for the positive and negative parts of $\rho$ respectively.
\end{remark*}
\begin{proof}[Proof of Lemma~\ref{lemma: porosity e}]
     We will use a modified version of the setup from \cite[Porous decomposition of $\mathcal{E}$]{dymond2018mapping}. Fix $d \in \N_{\geq 2}$ and $M \in \N$.  Let $(O_{M,n})_{n \in \N}$ be a countable topological basis of $[-M,M]^d.$ 
For $C,L,n \in \N$ we let $\mathcal{E}_{M,C,L,n}$ be the set of positive
continuous functions $\rho:[-M,M]^d \to \R$ for which the following holds: there are pairwise disjoint, non-empty open sets $Y_1,...,Y_N \subset [-M,M]^d$ with $Y_1 = O_{M,n}$, where $N \in [C]$, an open set $V \subset \R^d$, and a family of $(\frac{1}{2C^2}, L)-$bi-Lipschitz homeomorphisms $f_i
: Y_i \to V$ such that
\begin{equation} \label{eq: ecln jacobian criterion}
\begin{split}
    &\rho(y) = |\text{Jac}(f_1)(y)| - \sum_{i=2}^N (\rho\circ f_i^{-1}\circ f_1)(y)|\text{Jac}(f_i^{-1} \circ f_1)(y)| \qquad \text{for}\; a.e. \; y \in O_{M,n}.
    \end{split}
\end{equation}
By \cite[Lemma $4.7$]{dymond2018mapping}, wherein the replacement of $I^d \coloneqq [0,1]^d$ with $[-M,M]^d$ does not affect the proof, we have that 
$\mathcal{E}_{M,C,L,n}$ is a porous subset of ${(C([-M,M]^d, \R),||\cdot||_{\infty}).}$ Thus, for each $M \in \N$, the set $\bigcup_{C,L,n \in \N}\mathcal{E}_{M,C,L,n}$ is $\sigma-$porous. By Lemma \ref{lemma: existence of bad rho}, the set $$A_M \coloneqq \bigg\{\rho\in C_b(\R^d,\R):\rho|_{[-M,M]^d}\in  \bigcup_{C,L,n \in \N}\mathcal{E}_{M,C,L,n}\bigg\}$$ is $\sigma-$porous in $(C_b(\R^d,\R),||\cdot||_{\infty}).$ As a result, $A \coloneqq \bigcup_{M=1}^\infty A_M$ is $\sigma-$porous in $(C_b(\R^d,\R),||\cdot||_{\infty})$.

 We now show that $\mathcal{E} \subseteq A$. Fix $\rho \in \mathcal{E}.$ Then there exists a Lipschitz regular function $f:\R^d \to \R^d$ with $f\#(\rho\leb) = \leb$. 
 By \cite[Proposition~$2.15$]{dymond2018mapping}, there exist 
 $N \leq \text{Reg}(f),$ pairwise disjoint, non-empty subsets $\widetilde{Y}_1,...,\widetilde{Y}_{N} \subset \R^d$ and a non-empty open set $\widetilde{V} \subseteq \R^d$ such that $$\bigcup_{i=1}^{N} \widetilde{Y}_i = f^{-1}(\widetilde{V})$$ and
 the map $f$ can be `decomposed' into bi-Lipschitz bijections  $$f|_{\widetilde{Y}_i} \eqqcolon f_i:\widetilde{Y}_i \to \widetilde{V}\qquad \text{with} \;i \in [N].$$ Moreover, their result allows for each $i \in [N]$ that the map $f_i$ is $( \frac{1}{2\text{Reg}(f)^2},\text{Lip}(f))-$bi-Lipschitz. Now take an arbitrary open ball $V \subseteq \widetilde{V}$. Since $f_i$ is $( \frac{1}{2\text{Reg}(f)^2},\text{Lip}(f))-$bi-Lipschitz for each $i \in [N],$ the preimage $f^{-1}(V)$ is the union of a collection of pairwise disjoint bounded open sets $Y_1,...,Y_{N} \subset \R^d$ given by
 $$Y_i = f_i^{-1}(V),\qquad i \in [N].$$ 
In particular, there exists some $M \in \N$ such that
$$f^{-1}(V) = \bigcup_{i=1}^NY_i \subseteq [-M,M]^d,$$ so we have that
  \begin{equation} \label{eq: pushforward restriction}
     (f\#(\rho|_{[-M,M]^d}\leb))|_{V} = \leb|_{V}.
 \end{equation} 
Let $B\subseteq V$ be a measurable set. By \eqref{eq: pushforward restriction}
\begin{equation*}
\begin{split}
    \int_{B}d\leb =\leb(B) = (f\#(\rho|_{[-M,M]^d}\leb))(B) = \int_{f^{-1}(B)} \rho|_{[-M,M]^d}\; d\leb &= \sum_{i=1}^N \int_{f_i^{-1}(B)}\rho|_{[-M,M]^d} \; d\leb \\&=\int_{B}\sum_{i=1}^N(\rho|_{[-M,M]^d}\circ f_i^{-1}) |\text{Jac}(f_i^{-1})| \; d\leb. 
    \end{split}
\end{equation*}
Since $f_1$ is bi-Lipschitz, we can write $B = f_1(f_1^{-1}(B))$ and apply a change of variables to both sides to get
\begin{equation*}
\begin{split}
    \int_{f_1^{-1}(B)}|\text{Jac}(f_1)|\;d\leb &=  \int_{f_1^{-1}(B)}\left(\left(\sum_{i=1}^N(\rho|_{[-M,M]^d}\circ f_i^{-1}) |\text{Jac}(f_i^{-1})|\right)\circ f_1\right)|\text{Jac}(f_1)| \; d\leb \\&= \int_{f_1^{-1}(B)}\sum_{i=1}^N(\rho|_{[-M,M]^d}\circ f_i^{-1}\circ f_1) |\text{Jac}(f_i^{-1})\circ f_1||\text{Jac}(f_1)| \; d\leb \\&= \int_{f_1^{-1}(B)}\sum_{i=1}^N(\rho|_{[-M,M]^d}\circ f_i^{-1}\circ f_1) |\text{Jac}(f_i^{-1}\circ f_1)| \; d\leb,
    \end{split}
\end{equation*} where for the second equality we used right-distributivity of composition over addition and multiplication, and for the final equality we used the chain rule for Jacobians. Since $B\subseteq V$ is an arbitrary measurable set, the integrands must agree almost everywhere on $f_1^{-1}(V) = Y_1$, i.e.
\begin{equation*}
      \rho|_{[-M,M]^d}(y) =|\text{Jac}(f_1)(y)|-\sum_{i=2}^N (\rho|_{[-M,M]^d}\circ f_i^{-1}\circ f_1)(y)|\text{Jac}(f_i^{-1} \circ f_1)(y)| \qquad \text{for} \;a.e. \;y \in Y_1.
 \end{equation*} 
 Now choose $n \in \N$ such that $O_{M,n} \subseteq Y_1.$
Then we have shown that $\rho|_{[-M,M]^d} \in \mathcal{E}_{M,C,L,n}$ for $C = \text{Reg}(f)$ and $L = \text{Lip}(f),$ meaning $\rho \in A_M \subseteq A,$ as required. Hence, by the $\sigma-$porosity of $A$ we have that $\mathcal{E}$ is $\sigma-$porous in $(C_b(\R^d,\R), ||\cdot||_{\infty}).$
\end{proof}

\begin{lemma} \label{lemma: porosity f}
     Let $d \in \N_{\geq 2}$ and $\mathcal{F} \coloneqq \{\rho \in C_b(\R^d,\R): \exists f:\R^d \to \R^d \;\text{Lipschitz regular with}\; f\#\leb =~\rho\leb\}$. Then $\mathcal{F}$ is a $\sigma-$porous subset of $(C_b(\R^d,\R),||\cdot||_{\infty}).$
\end{lemma}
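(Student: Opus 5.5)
We mirror the proof of Lemma~\ref{lemma: porosity e}, applying the same porous decomposition to a local equation satisfied by $\rho$. Fix $\rho\in\mathcal{F}$ and a Lipschitz regular $f$ with $f\#\leb=\rho\leb$. By \cite[Proposition~$2.15$]{dymond2018mapping} there are $N\le \text{Reg}(f)$ pairwise disjoint non-empty sets $\widetilde Y_1,\dots,\widetilde Y_N$, a non-empty open set $\widetilde V$ with $f^{-1}(\widetilde V)=\bigcup_i\widetilde Y_i$, and $(\frac{1}{2\text{Reg}(f)^2},\text{Lip}(f))$-bi-Lipschitz bijections $f_i=f|_{\widetilde Y_i}:\widetilde Y_i\to\widetilde V$. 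Choose an open ball $V\subseteq\widetilde V$ and set $Y_i=f_i^{-1}(V)$. These are bounded, so both $V$ and $\bigcup Y_i$ lie in some $[-M,M]^d$.

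For measurable $B\subseteq V$, the identity $f\#\leb=\rho\leb$ gives
\[
\int_B\rho\,d\leb=\leb(f^{-1}(B))=\sum_{i=1}^N\int_B|\text{Jac}(f_i^{-1})|\,d\leb .
\]
Hence
\[
\rho=\sum_{i=1}^N|\text{Jac}(f_i^{-1})| \quad \text{a.e. on } V.
\]
The key point is that this equation is simpler than \eqref{eq: ecln jacobian criterion}: it is linear in the Jacobians and $\rho$ appears only on the left. We therefore define, for $M,C,L,n\in\N$ and a countable basis $(O_{M,n})_n$ of $[-M,M]^d$, the set $\mathcal{F}_{M,C,L,n}$ of positive continuous $\rho:[-M,M]^d\to\R$ with the following property: there exist $N\in[C]$, an open set $V\supseteq O_{M,n}$, and $(\frac{1}{L},2C^2)$-bi-Lipschitz homeomorphisms $g_i=f_i^{-1}:V\to Y_i$ onto pairwise disjoint open sets, such that $\rho=\sum_{i\le N}|\text{Jac}(g_i)|$ a.e. on $O_{M,n}$. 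By the previous paragraph, $\rho|_{[-M,M]^d}\in\mathcal{F}_{M,\text{Reg}(f),\text{Lip}(f),n}$ for any $n$ with $O_{M,n}\subseteq V$. Thus $\mathcal{F}\subseteq\bigcup_M\{\rho:\rho|_{[-M,M]^d}\in\bigcup_{C,L,n}\mathcal{F}_{M,C,L,n}\}$, and Lemma~\ref{lemma: existence of bad rho} together with countable unions reduces the problem to porosity of each $\mathcal{F}_{M,C,L,n}$ in $C([-M,M]^d,\R)$.

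The main obstacle is proving this porosity. We would adapt \cite[Lemma~$4.7$]{dymond2018mapping}. That argument shows that the set of densities equal a.e. on a fixed open set to a sum of at most $C$ Jacobians of uniformly bi-Lipschitz maps is porous, via a Burago--Kleiner/McMullen-type perturbation. Given $\rho$ in the set and $\epsilon$, one adds a small-amplitude checkerboard of size $\sim\epsilon$ on a tiny cube inside $O_{M,n}$, with scale chosen according to $C,L$. One then shows that any $\zeta$ within $\alpha\epsilon$ of the perturbed density cannot be written as $\sum_{i\le N}|\text{Jac}(g_i)|$ with the given bi-Lipschitz bounds. The obstruction is that each $g_i$ has bounded distortion, so on small cubes it is close to affine in a quantitative sense, and the summed Jacobians cannot reproduce the fast fluctuation. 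Since the map from the family $(g_i)$ to the density is the special case of \eqref{eq: ecln jacobian criterion} with the $\rho$-dependent terms removed, the estimates of \cite[Lemma~$4.7$]{dymond2018mapping} should transfer verbatim, possibly after reformulating through the single map $g_1$ and the transition maps $g_i\circ g_1^{-1}$. Checking that transfer carefully is the step we expect to require real work.
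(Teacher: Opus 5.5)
Your reduction is sound. The decomposition from \cite[Proposition~2.15]{dymond2018mapping} does give $\rho=\sum_{i\le N}|\text{Jac}(f_i^{-1})|$ a.e.\ on $V$. Routing through $[-M,M]^d$ and Lemma~\ref{lemma: existence of bad rho} is a valid, if slightly roundabout, alternative to the paper. The paper instead defines $\mathcal{F}_{C,L,n}\subseteq C_b(\R^d,\R)$ directly, using a countable basis $(O_n)$ of \emph{bounded} open subsets of $\R^d$; this suffices because the perturbation it adds is compactly supported in $O_n$. (You should either justify or simply drop the word `positive' in your definition of $\mathcal{F}_{M,C,L,n}$, since nothing uses it.)

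The genuine gap is the porosity of $\mathcal{F}_{M,C,L,n}$. That is the whole content of the lemma, and you leave it as an expectation. The reason you give for it is also wrong: deleting the $\rho$-dependent terms from \eqref{eq: ecln jacobian criterion} leaves $\rho=|\text{Jac}(f_1)|$, a single Jacobian. You must instead exclude $\rho=\sum_{i\le N}|\text{Jac}(g_i)|$ for up to $C$ different maps on the same domain. Near-affineness of each $g_i$ separately is not enough. You need one pair of cubes on which \emph{all} the maps are simultaneously nearly affine while the checkerboard still oscillates.

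That is exactly what \cite[Lemma~4.6]{dymond2018mapping} provides. For an open $U$ and $\epsilon\in(0,1)$ it gives $\psi\in C_b(\R^d,\R)$ with $\infnorm{\psi}\le\epsilon$ and $\operatorname{supp}\psi\subseteq U$. Every tuple of $(\frac1L,2C^2)$-bi-Lipschitz maps $h_i\colon U\to\R^d$ then admits $\mathbf{e}_1$-adjacent cubes $S,S'\subseteq U$ with two properties: the averages over $S$ and $S'$ of each $|\text{Jac}(h_i)|$ differ by at most $\frac{\epsilon}{3+C}$, and those of $\psi$ differ by at least $\epsilon$.

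The paper then argues directly, using exactly the linearity you noticed. Given $\phi$, pick $U\subseteq O_n$ so small that $\phi$ oscillates by at most $\frac{\epsilon}{3+C}$ on $U$, and set $\widetilde\phi=\phi+\psi$. Suppose some $\rho$ with $\infnorm{\rho-\widetilde\phi}<\frac{\epsilon}{3+C}$ equalled $\sum_{i\le N}|\text{Jac}(h_i)|$ a.e.\ on $O_n$. Compare the averages over $S$ and $S'$ of both sides of $\psi=(\widetilde\phi-\rho)+\sum_{i\le N}|\text{Jac}(h_i)|-\phi$. This yields $\epsilon<(3+N)\frac{\epsilon}{3+C}\le\epsilon$, a contradiction. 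So each set is porous with constant $\frac{1}{3+C}$. No adaptation of \cite[Lemma~4.7]{dymond2018mapping} or reformulation through transition maps is needed, and the same argument works in your $C([-M,M]^d,\R)$ setting with $O_{M,n}$ in place of $O_n$.
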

\begin{proof}
     Fix $(O_n)_{n \in \N}$ to be a countable topological basis for $\R^d$ consisting only bounded sets. For each $C,L,n \in \N$ let 
    \begin{equation*}
 \mathcal{F}_{C,L,n} \coloneqq \left\{
  \rho\in C_b(\R^d,\R) \;\middle|\;
  \begin{aligned}
  & \exists N \in [C], \;h_1,...,h_N:O_n \to \R^d \text{ all }(\tfrac{1}{L},2C^2)\text{-bi-Lipschitz such that}\\
  &\qquad \qquad \rho(x) = \sum_{i=1}^N|\text{Jac}(h_i)(x)| \text{ for $a.e$ $x \in O_n$}.
  \end{aligned}
\right\}
\end{equation*}
First we show that $\mathcal{F} \subseteq \bigcup_{C,L,n \in \N}\mathcal{F}_{C,L,n}.$ Indeed, if $\rho \in \mathcal{F},$ then there exists a Lipschitz regular function $f:\R^d \to \R^d$ with $f\#\leb = \rho\leb.$ Let $L \coloneqq \text{Lip}(f)$ and $C \coloneqq \text{Reg}(f).$ By \cite[Proposition $2.15$]{dymond2018mapping}, there exist $N \in [C]$, an open set $V \subset \R^d,$ pairwise disjoint, non-empty open sets $Y_1,...,Y_N \in \R^d$ such that $\bigcup_{i \in [N]} Y_i = f^{-1}(V)$, and $(\frac{1}{2C^2},L)$-bi-Lipschitz bijections $f_i: Y_i \to V$ given by $f_i = f|_{Y_i}$ for each $i \in [N].$ Now let $A \subseteq V$ be a measurable set. Then
    \begin{equation*}
    \begin{split}
        \int_{A}\rho \; d \leb &= \rho \leb(A) = (f\#\leb)(A) = \int_{f^{-1}(A)}d \leb \\
        & = \sum_{i=1}^N \int_{f_{i}^{-1}(A)}d \leb  = \sum_{i=1}^N \int_{A}|\text{Jac}(f_i^{-1})| \; d \leb = \int_{A}\sum_{i=1}^N|\text{Jac}(f_i^{-1})| \; d \leb.
    \end{split}
\end{equation*}
Since $A \subseteq V$ is an arbitrary measurable set, the integrands must agree almost everywhere on $V$, so we have that $\rho(x) = \sum_{i=1}^N |\text{Jac}(h_i)(x)|$ for $a.e.$ $x \in V,$ where $h_i = f_i^{-1}$ for each $i \in [N].$ Now we choose $n \in \N$ satisfying $O_n \subseteq V$ to conclude that $\rho \in \mathcal{F}_{C,L,n}.$ 

Fix $C,L,n \in \N$, $\phi \in C_b(\R^d, \R)$ and $\epsilon \in (0,1).$ We now construct a function $\widetilde{\phi} \in C_b(\R^d,\R)$ such that $||\phi - \widetilde{\phi}||_{\infty} \leq \epsilon$ and $B_{||\cdot||_{\infty}}(\widetilde{\phi}, \frac{\epsilon}{3+C}) \cap \mathcal{F}_{C,L,n} = \varnothing$. First, since $\phi$ is uniformly continuous on $\overline{O_n},$ we can choose $\delta>0$ sufficiently small so that  
\begin{equation} \label{eq: uniform continuity}
    |\phi(x)-\phi(y)| \leq \tfrac{\epsilon}{3+C} \qquad \text{whenever $x,y \in O_n$ with $||x-y|| \leq \delta$}.
\end{equation}
Now choose an open set $U \subseteq O_n$ with $\diam(U) \leq \delta.$
By \cite[Lemma~4.6]{dymond2018mapping} there exists a function $\psi\in C_{b}(\R^{d},\R)$ such that $\infnorm{\psi}\leq \varepsilon$, $\operatorname{supp}(\psi)\subseteq U$ and for every $k$-tuple of $(\tfrac{1}{L},2C^2)$-bilipschitz mappings $h_{i}\colon U\to \R^{d}$ there exists $\mathbf{e}_1-$adjacent cubes $S,S' \subseteq U$ such that
    \begin{equation} \label{eq: jac jac}
        \tfrac{1}{\leb(S)}\bigg|\int_S |\text{Jac}(h_i)| \; d\leb - \int_{S'}|\text{Jac}(h_i)| \; d \leb \bigg| \leq \tfrac{\epsilon}{3+C}
    \end{equation} and
    \begin{equation} \label{eq: psi psi}
        \tfrac{1}{\leb(S)}\bigg|\int_S \psi\; d \leb - \int_{S'}\psi \; d \leb\bigg| \geq \epsilon.
    \end{equation} 
We define $\widetilde{\phi} \in C_b(\R^d, \R)$ to be given by
$$\widetilde{\phi} \coloneqq \phi + \psi.$$
Then $||\phi - \widetilde{\phi}||_{\infty} = ||\psi||_{\infty} \leq \epsilon,$ as required. Now let $\rho \in B_{||\cdot||_{\infty}}(\widetilde{\phi},\frac{\epsilon}{3+C}),$ and suppose for a contradiction that $\rho \in \mathcal{F}_{C,L,n}.$ This means there exist $N \in [C]$ and $(\frac{1}{L},2C^2)-$bi-Lipschitz maps $h_1,...,h_N:O_n \to \R^d$ such that 
$$\rho(x) = \sum_{i=1}^N |\text{Jac}(h_i)(x)|\qquad \text{for $a.e. \; x \in O_n$}.$$
Notice that
\begin{equation}\label{eq: psi decomposition}
    \begin{split}
        \psi(x) = \widetilde{\phi}(x)-\rho(x) + \rho(x) - \phi(x) =  \widetilde{\phi}(x)-\rho(x) + \sum_{i=1}^N |\text{Jac}(h_i)(x)| - \phi(x), \qquad a.e.\; x \in O_n.
    \end{split}
\end{equation}  Now let $S,S' \subseteq U$ be the cubes satisfying \eqref{eq: jac jac} and \eqref{eq: psi psi} and for the two sides of \eqref{eq: psi decomposition}, consider the size of the difference between their averages over $S$ and $S'$. By \eqref{eq: psi psi},
the left-hand difference is at least $\epsilon.$ On the other hand, since first we have that $||\widetilde{\phi}-\rho||_{\infty} < \frac{\epsilon}{3+C},$ then we have \eqref{eq: jac jac}, and finally we have $|\phi(x)-\phi(y)| \leq \frac{\epsilon}{3+C}$ for each $x,y \in S \cup S'$ (using $\diam(S \cup S') \leq \diam(U) \leq \delta$ and \eqref{eq: uniform continuity}), the right-hand difference is strictly less than $$2\tfrac{\epsilon}{3+C} + N\tfrac{\epsilon}{3+C} + \tfrac{\epsilon}{3+C} \leq 2\tfrac{\epsilon}{3+C} + C\tfrac{\epsilon}{3+C} + \tfrac{\epsilon}{3+C} = \epsilon,$$ which is a contradiction. Therefore, $\rho$ does not belong to $\mathcal{F}_{C,L,n}.$ Thus, $\mathcal{F}_{C,L,n}$ is porous, and so we conclude that $\mathcal{F}$ is $\sigma-$porous in $(C_b(\R^d,\R), ||\cdot||_{\infty}).$
\end{proof}
With this, the main theorem of this section follows as a corollary of Lemmas \ref{lemma: porosity e} and \ref{lemma: porosity f}.
\pushforwardthm*
\begin{proof}
 The set described in the statement of this theorem is precisely the union of $\mathcal{E}$ and $\mathcal{F}$ from Lemmas~\ref{lemma: porosity e} and \ref{lemma: porosity f} respectively, which are both $\sigma-$porous, and so is itself $\sigma-$porous.
\end{proof} 
\section{Lipschitz self-bijections of $\Z^d$ with different co-Lipschitz properties} \label{chapter 4}
   In this section we show that the two forward implications in the conclusion of Theorem~\ref{thm:summary} are, in general, strict. In other words, we show that Lipschitz bijections between two Delone sets in $\R^{d}$ may fail to be co-Lipschitz at every point and we verify that Lipschitz and pointwise co-Lipschitz bijections may fail to be bilipschitz. 
   
   Among Lipschitz bijections between Delone sets, this chapter emphasises that the class of pointwise co-Lipschitz maps is strictly broader than the class of bi-Lipschitz maps. This is shown in Lemma \ref{lemma: bilipschitz properties}, for which we construct a Lipschitz bijection $g^{(d)}:\Z^d \to \Z^d$ dependent on a sequence $(N_i)_{i \in \N} \subseteq \N,$ such that different candidates for $(N_i)_{i \in \N}$ give $g^{(d)}$ different co-Lipschitz properties. In particular, there exist sequences such that $g^{(d)}$ is pointwise co-Lipschitz but is not bi-Lipschitz.

\begin{lemma} \label{lemma: lip extension higher dim}
    Let $d \in \N$, and for each $i \in [d]$ let $L_i >0$ and the mapping $h_i:\R \to \R$ be $L_i-$Lipschitz. Then the mapping $g^{(d)}:\R^d \to \R^d$ given by $$g^{(d)}(x_1,...,x_d) \coloneqq (h_1(x_1),h_2(x_2),...,h_d(x_d))$$ is $(\max_{i \in [d]}L_i)-$Lipschitz.
\end{lemma}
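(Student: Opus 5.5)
The plan is a direct coordinatewise estimate, with the Euclidean norm squared so that the coordinate bounds combine. Set $L \coloneqq \max_{i \in [d]} L_i$ and fix $x = (x_1,\dots,x_d)$, $y = (y_1,\dots,y_d) \in \R^d$. Since each $h_i$ is $L_i$-Lipschitz on $\R$, the first step is to record, for every $i \in [d]$,
\begin{equation*}
|h_i(x_i) - h_i(y_i)| \leq L_i |x_i - y_i| \leq L|x_i - y_i|.
\end{equation*}

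The second step is to square and sum these inequalities over $i \in [d]$. The left-hand side becomes $||g^{(d)}(x) - g^{(d)}(y)||^2$, because the $i$-th coordinate of $g^{(d)}(x) - g^{(d)}(y)$ is exactly $h_i(x_i) - h_i(y_i)$. The right-hand side becomes $L^2 \sum_{i=1}^d |x_i - y_i|^2 = L^2 ||x-y||^2$. Taking square roots gives $||g^{(d)}(x) - g^{(d)}(y)|| \leq L||x-y||$, which is the claim.

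There is no real obstacle; the one point needing care is to use the Euclidean norm $||\cdot||$ as in the paper's notation. Because that norm combines the coordinates through a square-root sum of squares, the constant is $\max_i L_i$ rather than something like $\sqrt{d}\max_i L_i$. The same argument also works verbatim for $||\cdot||_\infty$, by taking maxima instead of sums of squares.
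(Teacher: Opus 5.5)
Your proof is correct and is essentially the paper's own argument. The paper likewise expands $||g^{(d)}(X)-g^{(d)}(Y)||^2$ as a sum of squared coordinate differences, bounds each term by $L_i^2(x_i-y_i)^2 \leq (\max_{i\in[d]}L_i)^2(x_i-y_i)^2$, and takes square roots.
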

\begin{proof}
    Let $X,Y \in \R^d$ be denoted as $X = (x_1,...,x_d)$ and $Y = (y_1,...,y_d).$ Then
    \begin{equation*}
        \begin{split}
            ||g^{(d)}(X)-g^{(d)}(Y)||^2 
            &= (h_1(x_1)-h_1(y_1))^2 + ...+ (h_d(x_d)-h_d(y_d))^2 \leq L_1^2(x_1-y_1)^2 + ... + L_d^2(x_d+y_d)^2 \\
            & \leq \left(\max_{i \in [d]}L_i\right)^2((x_1-y_1)^2+... + (x_d -y_d)^2) = \left(\max_{i \in [d]} L_i\right)^2 ||X-Y||^2.
        \end{split}
    \end{equation*}
\end{proof}
\begin{lemma} \label{lemma: bilip at 0} \label{lemma: not bilip at pt}
    Let $d \in \N$, $U \subseteq \R$ a Delone set with $0 \in U$, $g:U \to U$ a Lipschitz map and let $g^{(d)}:U^d \to U^d$ be given by
    $$g^{(d)}(x_1,...,x_d) \coloneqq (g(x_1),x_2,...,x_d).$$ Then $g$ is pointwise co-Lipschitz if and only if $g^{(d)}$ is pointwise co-Lipschitz.
\end{lemma}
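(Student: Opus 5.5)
The plan is to argue directly from Definition~\ref{define: bilip with set}, coordinate by coordinate, exploiting that $g^{(d)}$ acts as $g$ in the first coordinate and as the identity in the others.

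\emph{Bijectivity.} Since $g^{(d)} = g \times \mathrm{id}_{U^{d-1}}$, it is a bijection of $U^d$ if and only if $g$ is a bijection of $U$. In that case its inverse is $(g^{(d)})^{-1}(y_1,\dots,y_d) = (g^{-1}(y_1), y_2, \dots, y_d)$. Both notions of ``pointwise co-Lipschitz'' presuppose bijectivity, so from here on we may assume $g$ is a bijection.

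\emph{($\Rightarrow$).} Suppose $g$ is pointwise co-Lipschitz. Fix $x = (x_1,\dots,x_d) \in U^d$ and let $\beta = \beta(x_1) > 0$ be such that
\[
|g(z)-g(x_1)| \geq \beta |z - x_1| \qquad \text{for all } z \in U.
\]
For any $z = (z_1,\dots,z_d) \in U^d$ we compute
\begin{equation*}
\|g^{(d)}(z)-g^{(d)}(x)\|^2 = (g(z_1)-g(x_1))^2 + \sum_{i=2}^d (z_i-x_i)^2 \geq \min\{\beta,1\}^2 \|z-x\|^2 .
\end{equation*}
Hence $(g^{(d)})^{-1}$ is Lipschitz at $g^{(d)}(x)$ with constant $\max\{\beta^{-1},1\}$. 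Since $x$ was arbitrary, $g^{(d)}$ is pointwise co-Lipschitz.

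\emph{($\Leftarrow$).} Suppose $g^{(d)}$ is pointwise co-Lipschitz. Fix $a \in U$ and use the hypothesis $0 \in U$ to form the point $\bar a = (a,0,\dots,0) \in U^d$. Take $\beta > 0$ to be the co-Lipschitz constant of $g^{(d)}$ at $g^{(d)}(\bar a)$, and test only against points $\bar z = (z,0,\dots,0)$ with $z \in U$. Then
\[
|g(z)-g(a)| = \|g^{(d)}(\bar z) - g^{(d)}(\bar a)\| \geq \beta \|\bar z - \bar a\| = \beta |z-a| ,
\]
so $g$ is co-Lipschitz at $g(a)$. As $a$ was arbitrary, $g$ is pointwise co-Lipschitz.

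\emph{Remarks.} The Lipschitz assumption on $g$ is not actually needed for this equivalence. Also, Proposition~\ref{thm: bilip} would allow us to check co-Lipschitzness at a single point only, but the direct argument above handles every point at no extra cost.

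There is no real obstacle. The only points needing care are the bookkeeping of bijectivity and, in the converse direction, the need for a point of $U$ to fill the remaining coordinates, which is exactly what the hypothesis $0 \in U$ provides.
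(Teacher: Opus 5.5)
Your proof is correct, and it takes a slightly different route from the paper. The paper first invokes the dichotomy of Proposition~\ref{thm: bilip} for both $g$ and $g^{(d)}$, so that it only has to compare co-Lipschitzness at the single points $g(0)$ and $g^{(d)}(0,\dots,0)$. It then uses the same squared-norm estimate as you, with the bound $\min\{1,\beta^2\}$, for the forward direction. For the converse it argues by contrapositive: a sequence $x_i$ witnessing failure at $g(0)$ is lifted to $(x_i,0,\dots,0)$.

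You instead run both estimates directly at every point, which has three effects:
\begin{itemize}
\item Proposition~\ref{thm: bilip} is not needed at all.
\item As you remark, the Lipschitz hypothesis on $g$ and the Delone property of $U$ play no role; the paper needs them to apply the dichotomy. Likewise any fixed $u_0\in U$ could replace $0$ in the padding coordinates.
\item Your explicit check that $g^{(d)}$ is bijective exactly when $g$ is fills in a step the paper leaves implicit.
\end{itemize}

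What the paper's reduction buys is the form used later in Lemma~\ref{lemma: bilipschitz properties}: $g^{(d)}$ is pointwise co-Lipschitz if and only if $g$ is co-Lipschitz at $0$. That form is recovered from your version combined with Proposition~\ref{thm: bilip} applied to $g$ alone.
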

\begin{proof} Throughout the proof, we will use the dichotomy from Proposition \ref{thm: bilip}, that $g$ or $g^{(d)}$, respectively, are either pointwise co-Lipschitz or nowhere co-Lipschitz. Thus, it is sufficient to check whether $g$ or $g^{(d)}$ are co-Lipschitz at $g(0)$ or $g^{(d)}(0,0,...,0)$ respectively.

    First, if $g$ is co-Lipschitz at $g(0),$ there exists $\beta>0$ such that for all $x \in U$
    $${|g(x)-g(0)|}\geq \beta|x|.$$
    Then for each non-zero $X = (x_1,...,x_d) \in U^d$
    \begin{equation*}
        \begin{split}
            \frac{||g^{(d)}(X)-g^{(d)}(0,0,...,0)||^2}{||X||^2} &= \frac{(g(x_1)-g(0))^2+x_2^2+...+x_d^2}{x_1^2+...+x_d^2} \geq \frac{\beta^2x_1^2+x_2^2+...+x_d^2}{x_1^2+...+x_d^2} = 1+\frac{(\beta^2-1)x_1^2}{x_1^2+...+x_d^2}\\&= 1+\frac{\beta^2-1}{1+(\frac{x_2}{x_1})^2+...+(\frac{x_d}{x_1})^2}\geq \min\{1,1+{\beta^2-1}\}=\min\{1,\beta^2\}>0.
        \end{split}
    \end{equation*}
    Conversely, if $g$ is not co-Lipschitz at $g(0),$ there exists a sequence $(x_i)_{i \in \N} \subseteq U\setminus \{0\}$ such that for each $i \in \N$
    $$\frac{|g(x_i)-g(0)|}{|x_i-0|}<\frac{1}{i}.$$
Now choose the sequence $(X_i)_{i \in \N} \subseteq U^d$ with
$X_i = (x_i,0,0,...,0).$
Then for each $i \in \N$
$$\frac{||g^{(d)}(X_i) - g^{(d)}(0,0,...,0)||}{||X_i - (0,0,...,0)||} = \frac{|g(x_i)-g(0)|}{|x_i-0|}<\frac{1}{i},$$
so $g^{(d)}$ is not co-Lipschitz at $g^{(d)}(0,0,...,0).$
\end{proof} 

For the remainder of the present section, we will describe a particular construction for a Lipschitz bijection of $\Z^d,$ denoted $g^{(d)}:\Z^d \to \Z^d.$ It will be built as follows. First, for a given $N \in \N$, we define the `stretch and $3-$fold' bijection $h^{(N)}:[3N] \to [3N].$ Then we construct a Lipschitz bijection $g_+:\N \to \N$ by breaking $\N$ into intervals (see Figure \ref{figure: bijection}) and applying a translated `stretch and $3-$fold' to each. We then lift $g_+$ to a Lipschitz bijection $g:\Z \to \Z$ in the usual way, given by \eqref{eq: definition of f}. In choosing the lengths of these intervals, we will be able to control whether $g$ is pointwise co-Lipschitz or not, and whether $g$ is bi-Lipschitz or not. Finally, we lift $g$ to a Lipschitz bijection $g^{(d)}:\Z^d \to \Z^d$, given by \eqref{eq: definition of fd}, retaining the bi-Lipschitz properties of $g.$

The motivation of the following construction is analogised by stretching and then folding a line segment into a `$Z-$shape' (see Figure \ref{figure: Z}). Given $N \in \N,$ we define the bijection $h^{(N)}:[3N] \to [3N]$ as
\begin{equation} \label{eq: hN}
    h^{(N)}(x) = \begin{cases}
        3x, \qquad\qquad\qquad\;\; 0<x\leq N\\
        6N+2-3x,\qquad N<x\leq 2N\\
        3x-6N-2,\qquad 2N<x\leq 3N.
    \end{cases}
\end{equation} 

\begin{figure}[h]
    \centering
    \begin{tikzpicture}
        \draw[red,ultra thick] (0,1) -- (1,1);
        \draw[green,ultra thick] (1,1) -- (2,1);
        \draw[blue,ultra thick] (2,1) -- (3,1);
        \draw[red] (4,1)--(7,1);
        \draw[green] (7,1) -- (4.2,1.2);
        \draw[blue] (4.2,1.2) -- (7.2,1.2);
        \draw[->] (3.3,1) -- (3.7,1);
        \draw [ultra thick,red] (0,0) -- (2.8,0);
        \draw [ultra thick,green] (2.8,0) -- (5.6,0);
        \draw [ultra thick,blue] (5.6,0) -- (8.5,0);
    \foreach \i [count=\x from 0] in  {1,2,...,18}
\draw[thick] (\x/2,0mm) -- ++ (0,-3mm) node (n\x) [below] {$\i$};
\draw[blue] node at (0,-1) {$13$};
\draw[green] node at (0.5,-1) {$12$};
\draw[red] node at (1,-1) {$1$};
\draw[blue] node at (1.5,-1) {$14$};
\draw[green] node at (2,-1) {$11$};
\draw[red] node at (2.5,-1) {$2$};
\draw[blue] node at (3,-1) {$15$};
\draw[green] node at (3.5,-1) {$10$};
\draw[red] node at (4,-1) {$3$};
\draw[blue] node at (4.5,-1) {$16$};
\draw[green] node at (5,-1) {$9$};
\draw[red] node at (5.5,-1) {$4$};
\draw[blue] node at (6,-1) {$17$};
\draw[green] node at (6.5,-1) {$8$};
\draw[red] node at (7,-1) {$5$};
\draw[blue] node at (7.5,-1) {$18$};
\draw[green] node at (8,-1) {$7$};
\draw[red] node at (8.5,-1) {$6$};
    \end{tikzpicture}
    \caption{The `stretch and $3-$fold' mapping $h^{(6)}:[18] \to [18].$}
    \label{figure: Z}
\end{figure}
Let $(N_i)_{i \in \N} \subseteq \N$. Define the sequence of `base points' $(s_i)_{i \in \N}$ with $s_1 = 0$ and
\begin{equation} \label{eq: si}
    s_i \coloneqq \sum_{j=1}^{i-1}3N_j,\qquad i \geq 2.
\end{equation}
Then define the bijection $g_+:\N \to \N$ as
\begin{equation*} 
    g_+(x) = 
    s_{i}+h^{(N_i)}(x-s_{i}) \qquad \;\;\text{whenever} \; x \in s_{i}+[3N_i] \; \text{for} \; i \in \N.
\end{equation*}
This is indeed a bijection since each $x \in \N$ lies in a unique interval either of the form $s_{i}+[3N_i]$ (see Figure~\ref{figure: bijection}), and for each $i \in \N$ the `stretch and $3-$fold' mapping $h^{(N_i)}$ on $[3N_i]$ is a self-bijection. Extend $g_+$ to a bijection $g: \Z \to \Z$ by prescribing that 
\begin{equation}\label{eq: definition of f}
    g(0) = 0, \qquad g|_{\N} = g_+ \qquad \text{and} \qquad g(x) = -g_+(-x) \qquad \text{for all $x \in -\N$}.
\end{equation}

\usetikzlibrary{decorations.pathreplacing}
\begin{figure}[h]
    \centering
    \begin{tikzpicture}
        \draw[black, ultra thick] (0,0) -- (15,0);
\draw[thick] (0,0mm) -- ++ (0,-4mm) node [below] {$0=s_1$};
\draw[thick] (1,0mm) -- ++ (0,-1mm) node [below] {$1$};
\draw[thick] (3,0mm) -- ++ (0,-4mm) node [below] {$3N_1 = s_2$};
\draw[thick] (4,0mm) -- ++ (0,-1mm) node [below] {$s_2+1$};
\draw[thick] (7,0mm) -- ++ (0,-4mm) node [below] {$s_2+3N_2 = s_3$};
\draw[thick] (8,0mm) -- ++ (0,-1mm) node [below] {$s_3+1$};
\draw[thick] (11,0mm) -- ++ (0,-4mm) node [below] {$s_3+3N_3 = s_4$};
\draw[thick] (12,0mm) -- ++ (0,-1mm) node [below] {$s_4+1$};
\node at (13,-1) {$\cdots$};
\draw [decorate,decoration={brace,amplitude=5pt,raise=4ex}]
  (1,0) -- (3,0) node[midway,yshift=3em]{$3N_1$};
  \draw [decorate,decoration={brace,amplitude=5pt,raise=4ex}]
  (4,0) -- (7,0) node[midway,yshift=3em]{$3N_2$};
  \draw [decorate,decoration={brace,amplitude=5pt,raise=4ex}]
  (8,0) -- (11,0) node[midway,yshift=3em]{$3N_3$};
  \draw [decorate,decoration={brace,amplitude=5pt,raise=4ex}]
  (12,0) -- (15,0) node[midway,yshift=3em]{$3N_4$};
    \end{tikzpicture}
    \caption{The intervals on which $g_+$ is defined. For each $i \in \N$, on each interval of $3N_i$ natural numbers, the translated `stretch and $3-$fold' bijection is applied.}
    \label{figure: bijection}
\end{figure}
\begin{lemma} \label{lemma: 5-lipschitz}
    For any sequence $(N_i)_{i \in \N} \subseteq \N$, the bijection $g:\Z \to \Z$ given by \eqref{eq: definition of f} is $5-$Lipschitz.
\end{lemma}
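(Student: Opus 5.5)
Since $g$ is defined on the integers, a map $\Z\to\Z$ is $L$-Lipschitz as soon as $|g(x+1)-g(x)|\leq L$ for every $x\in\Z$. Indeed, for $x<y$ the triangle inequality along the chain $x,x+1,\dots,y$ gives $|g(y)-g(x)|\leq L(y-x)$. So the plan is to bound the jumps between consecutive integers by $5$. Using the odd symmetry $g(-x)=-g(x)$ from \eqref{eq: definition of f}, it suffices to treat $x\geq 0$: the step from $x=-1$ to $0$ mirrors the step from $0$ to $1$, and negative steps mirror positive ones.

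\textbf{Steps inside a block.} Fix $i$ and $x,x+1\in s_i+[3N_i]$, and put $t=x-s_i$. By the formula \eqref{eq: hN} for $h^{(N_i)}$, if $t$ and $t+1$ lie in the same third, the jump is exactly $3$. Only two steps cross a fold. From $t=N_i$ to $N_i+1$ the values go from $3N_i$ to $6N_i+2-3N_i-3=3N_i-1$, a jump of $1$. From $t=2N_i$ to $2N_i+1$ the values go from $6N_i+2-6N_i=2$ to $6N_i+3-6N_i-2=1$, again a jump of $1$.

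\textbf{Steps between blocks.} At the junction $x=s_{i+1}=s_i+3N_i$ we have $g(x)=s_i+h^{(N_i)}(3N_i)=s_i+3N_i-2=s_{i+1}-2$. The next point is $x+1=s_{i+1}+1$, with $g(x+1)=s_{i+1}+h^{(N_{i+1})}(1)=s_{i+1}+3$. The jump is therefore $5$, which is where the constant $5$ comes from. The remaining junction is $x=0$ to $1$, where $g(0)=0$ and $g(1)=h^{(N_1)}(1)=3$, giving a jump of $3$. Finally, the step from $-1$ to $0$ has jump $|{-g_+(1)}-0|=3$.

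Combining these cases, $|g(x+1)-g(x)|\leq 5$ for all $x\in\Z$, and the chaining argument gives that $g$ is $5$-Lipschitz, independently of $(N_i)_{i\in\N}$. The only step needing care is the block-junction computation, where we must evaluate $h^{(N)}$ correctly at $3N$ and at $1$.
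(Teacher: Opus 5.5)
Your proposal is correct and follows essentially the same route as the paper. Both bound the consecutive jumps $|g(x+1)-g(x)|$ by $5$: the jump is $3$ within a third, $1$ across each fold, $5$ at each block junction and $3$ at $0$. Both then reduce negative $x$ to positive $x$ via the oddness of $g$, and chain consecutive steps with the triangle inequality.
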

\begin{proof}
   We will prove the lemma as follows. 
 We will show for each $x \in \Z$ that 
   \begin{equation} \label{example: triple fold}
       |g(x)-g(x-1)|\leq 5.
   \end{equation}
   This means if $x,y \in \Z,$ then
   \begin{equation} \label{eq: lip max step}
       |g(x)-g(y)| = |g(\max\{x,y\}) - g(\max\{x,y\}-|x-y|)| \leq \sum_{i=1}^{|x-y|}|g(\max\{x,y\}-i+1)-g(\max\{x,y\}-i)| \leq 5|x-y|,
   \end{equation}so $g$ is $5-$Lipschitz. To this end, it suffices to show \eqref{example: triple fold} is true whenever $x \in \N.$ Indeed, considering $x \in -\N \cup\{0\},$ since $g(x) = -g(-x),$ we have that 
   $$|g(x) - g(x-1)| = |-g(-x)+g(1-x)|= |g(-x+1) -g(-x)| \leq 5,$$ so \eqref{example: triple fold} holds.
   
   Firstly, when $x = 1,$ we have that $g(1)-g(0) = 3(1)-0 = 3\leq 5.$ Now take $x \geq 2.$

   \begin{paragraph}{Case 1: $x$ and $x-1$ are both in the interval $s_{i} + [3N_i]$ for some $i \in \N$} It is clear if both $x$ and $x-1$ are in $s_{i} + I,$ where $I \in \{(0,N_i],(N_i,2N_i], (2N_i,3N_i]\},$ then $$|g(x) -g(x-1)| =3 \leq 5,$$ so \eqref{example: triple fold} holds. 
   Otherwise, $x$ and $x-1$ are in different thirds. If $x-1 \in s_{i} + (0,N_i]$ and $x \in s_{i} + (N_i,2N_i],$ then 
   $$|g(x)-g(x-1)| = |6N_i + 2 -3(N_i + 1) -3N_i| = 1\leq 5. $$
   Similarly, if $x-1 \in s_{i} + (N_i,2N_i]$ and $x \in s_{i} + (2N_i,3N_i],$ then
   $$|g(x)-g(x-1)| = |3(2N_i+1)-6N_i -2 -6N_i -2+3(2N_i)| = 1\leq 5. $$
   \end{paragraph}

   \begin{paragraph}{Case 2: $x-1 \in s_{i} + [3N_i]$ and 
   $x \in s_{i+1} + [3N_{i+1}]$ for some $i \in \N$}
      In this case, $x = s_{i+1}+1$ and $x-1 = s_i + 3N_i$. Since $s_{i+1} = s_{i}+3N_i$
   \begin{equation*}
       \begin{split}
           |g(x) - g(x-1)| &= |s_{i+1}+h^{(N_{i+1})}(1) -s_{i} - h^{(N_i)}(3N_i)| = |s_{i+1}+3-s_{i}- 3(3N_i) + 6N_i + 2|\\
           & = |3N_i -9N_i + 6N_i + 5| = 5,
       \end{split}
   \end{equation*} so \eqref{example: triple fold} holds.
   \end{paragraph}
\end{proof}
Finally, we lift the mapping $g$ from \eqref{eq: definition of f} to $\Z^d$ by defining the bijection $g^{(d)}:\Z^d \to \Z^d$ as \begin{equation} \label{eq: definition of fd}
     g^{(d)}(x_1,...,x_d) = (g(x_1),x_2,...,x_d).
 \end{equation}
\begin{lemma} \label{lemma: bilipschitz properties}
    Let $d \in \N$, $(N_i)_{i \in \N} \subseteq \N$, the bijection $g:\Z \to \Z$ given by \eqref{eq: definition of f}, dependent on $(N_i)_{i \in \N}$, and the bijection $g^{(d)}:\Z^d \to \Z^d$ given by
    \begin{equation*} \label{eq: fd}
     g^{(d)}(x_1,...,x_d) = (g(x_1),x_2,...,x_d).
 \end{equation*}
    Then
    \begin{enumerate}
    \item $g^{(d)}$ is $5-$Lipschitz;
    \item \label{enumerate: gd} $\max\{||g^{(d)}(x)-x||,||(g^{(d)})^{-1}(x)-x||\} \leq 3\max_{j \in [i]}N_j \text{ whenever $||x|| \leq 3\sum_{j=1}^iN_j$}$ for some $i \in \N;$
        \item \label{point 3, lemma 4.9} $\left(N_{i+1}(\sum_{j=1}^{i}N_j)^{-1}\right)_{i \in \N}$ is bounded if and only if $g^{(d)}$ is pointwise co-Lipschitz;
        \item \label{point 4} $(N_i)_{i \in \N}$ is bounded if and only if $g^{(d)}$ is bi-Lipschitz.
    \end{enumerate}
\end{lemma}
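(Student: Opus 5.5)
Each of the four items reduces to a statement about the one-dimensional map $g$, so I would prove everything in dimension one and then lift.

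For item 1, Lemma~\ref{lemma: 5-lipschitz} shows $g$ is $5$-Lipschitz, and $g^{(d)}$ agrees on $\Z^d$ with a map of the form in Lemma~\ref{lemma: lip extension higher dim}. Take $h_1$ to be the piecewise linear interpolation of $g$, which is $5$-Lipschitz, and $h_i=\mathrm{id}$ for $i\ge 2$. That lemma then gives that $g^{(d)}$ is $5$-Lipschitz.

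For item 2, suppose $\|x\|\le 3\sum_{j\le i}N_j = s_{i+1}$. Then $|x_1|\le s_{i+1}$, so $x_1\in \pm(s_k+[3N_k])$ for some $k\le i$, or $x_1=0$. Since $g$ maps each block $s_k+[3N_k]$ onto itself, and likewise for $g^{-1}$ (by the odd symmetry on negatives), both $g(x_1)$ and $g^{-1}(x_1)$ lie in the same block as $x_1$. A block of $3N_k$ consecutive integers has diameter $3N_k-1$, so $|g(x_1)-x_1|$ and $|g^{-1}(x_1)-x_1|$ are at most $3\max_{j\le i}N_j$. The other coordinates are unchanged, so the bound transfers to $g^{(d)}$ and $(g^{(d)})^{-1}$.

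For item 3, I would use Lemma~\ref{lemma: bilip at 0} (with $U=\Z$) to reduce to $g$. By Proposition~\ref{thm: bilip} it then suffices to test co-Lipschitz continuity at $g(0)=0$, that is, whether $|g(x)|\ge\beta|x|$ for all $x$. By symmetry, only $x\in\N$ matters.
\begin{itemize}
\item[$(\Rightarrow)$] Suppose $N_{i+1}\le K\sum_{j\le i}N_j$ for all $i$. Take $x\in s_{i+1}+[3N_{i+1}]$ with $i\ge1$. Then $g(x)> s_{i+1}$ and $x\le s_{i+1}+3N_{i+1}\le (1+K)s_{i+1}$, so $|g(x)|/|x|\ge 1/(1+K)$. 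For $x$ in the first block, $g(x)\ge1$ and $x\le 3N_1$, which gives ratio at least $1/(3N_1)$.
\item[$(\Leftarrow)$] Suppose the ratios are unbounded. Pick $x=s_{i+1}+2N_{i+1}$, the fold point with $h^{(N)}(2N)=2$. Then $g(x)=s_{i+1}+2$ while $x\ge 2N_{i+1}$. Hence $|g(x)|/|x|\le (3\sum_{j\le i}N_j+2)/(2N_{i+1})$, which tends to $0$ along a subsequence, so $g$ is not co-Lipschitz at $0$.
\end{itemize}

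For item 4, $g^{(d)}$ is bi-Lipschitz exactly when $g^{-1}$ is Lipschitz, since the remaining coordinates are the identity. If $N_i\le K$ for all $i$, then $g^{-1}$ moves each point by at most $3K$ (the block diameter argument from item 2). Since the inverse moves each point by a bounded amount and points of $\Z$ are $1$-separated, it is Lipschitz:
\[
|g^{-1}(u)-g^{-1}(v)|\le |u-v|+6K\le (1+6K)|u-v| \qquad \text{for } u\ne v.
\]
Conversely, within block $i$ the points $x=s_i+N_i$ and $y=s_i+N_i+1$ satisfy $g(x)=s_i+3N_i$ and $g(y)=s_i+3N_i-1$. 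Those images are adjacent, while the points $s_i+1$ and $s_i+2N_i$ have images $s_i+3$ and $s_i+2$, also adjacent. The preimages of these last two are at distance $2N_i-1$, so unbounded $N_i$ makes $g^{-1}$ non-Lipschitz.

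The main obstacle is the $(\Leftarrow)$ direction of item 3: choosing the right witness point inside each large block. The fold point $s_{i+1}+2N_{i+1}$, where $g$ returns close to $s_{i+1}$, is the natural candidate, but the argument depends on checking its image precisely.
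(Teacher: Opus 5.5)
Your proposal is correct and follows essentially the same route as the paper. You reduce each item to the one-dimensional map $g$, get the displacement bounds from block invariance, reduce pointwise co-Lipschitz continuity to the point $0$ via Proposition~\ref{thm: bilip} and Lemma~\ref{lemma: bilip at 0}, and use the fold points of $h^{(N)}$ as witnesses. The only differences are in the witnesses: you use $s_{i+1}+2N_{i+1}$ and the pair $s_i+1,\,s_i+2N_i$, where the paper uses $s_i+2N_i+1$ and the pair $s_i+2N_i+1,\,s_i$. In item 4, the first pair $s_i+N_i,\,s_i+N_i+1$ proves nothing, since both the points and their images are adjacent; the second pair alone does the job.
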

\begin{proof} By Lemmas \ref{lemma: lip extension higher dim} and \ref{lemma: 5-lipschitz}, $g^{(d)}$ is $5-$Lipschitz. Now recall the sequence $(s_i)_{i \in \N}$ from \eqref{eq: si} and the bijections $h^{(N_i)}:[3N_i] \to [3N_i]$ from \eqref{eq: hN} used to define $g.$ We now show that point \ref{enumerate: gd} holds. Notice that $g^{(d)}(x)$ and $x$ agree on all but the first coordinate, so it suffices to show point \ref{enumerate: gd} holds when $d=1.$ Let $i \in \N$ and $x \in \overline{B}(0,\sum_{j=1}^i 3N_j) \cap \Z.$ Point \ref{enumerate: gd} holds trivially when $x=0$. Consider the case when $x>0$. Then there exists some $k \in [i]$ such that $x \in s_k+[3N_k],$ so recalling~\eqref{eq: definition of f}
$$g(x)-x =g_+(x)-x = h^{(N_k)}(x-s_k)+s_k-x.$$
Notice that both $h^{(N_k)}(x-s_k)$ and $x-s_k$ are in $[3N_k],$ so it follows that
\begin{equation}\label{eq: gx x}
    |g(x)-x| \leq 3N_k\leq 3\max_{j \in [i]}N_j.
\end{equation}
Now consider the case when $x<0.$ Using \eqref{eq: gx x} and the fact that $g$ is an odd function 
$$|g(x)-x| = |-g(-x)+(-x)| = |g(-x)-(-x)| \leq 3\max_{j \in [i]} N_j.$$
It remains to show the same bound holds for $g^{-1}$ in place of $g.$ This follows immediately by the bijectivity of $g$ along with the fact that 
\begin{equation} \label{eq: image equality}
g\left({\overline{B}\left(0,\sum_{j=1}^i 3N_j\right)\cap\Z}\right) = \overline{B}\left(0,\sum_{j=1}^i 3N_j\right)\cap\Z\qquad  \text{ for each }i \in \N.
\end{equation}
We show \eqref{eq: image equality} is indeed true. It is clear for each $i \in \N$ that $0$ belongs to both sets in \eqref{eq: image equality} since $g(0) = 0$. Now fix $i \in \N$ and $x \in \overline{B}\left(0,\sum_{j=1}^i 3N_j\right)\cap\Z\setminus \{0\}.$ Then there exists some $k \in [i]$ such that $|x| \in s_k+[3N_k],$ so
$$|g(x)|=g(|x|) = s_k+h^{(N_k)}(|x|-s_k) \in s_k+ [3N_k] = \sum_{j=1}^{k-1}3N_j+[3N_k] \subseteq \left[\sum_{j=1}^i 3N_j \right],$$ so the left-hand set is a subset of the right-hand set in \eqref{eq: image equality}. Since $g$ is a injection, the two sets in \eqref{eq: image equality} have the same cardinality, and so the set inclusion becomes equality. Hence, we have shown that point \ref{enumerate: gd} holds.

Throughout the remainder of the proof, we use that by Proposition \ref{thm: bilip} and Lemma \ref{lemma: bilip at 0}, $g^{(d)}$ is pointwise co-Lipschitz if and only if $g$ is co-Lipschitz at $0$.
        We now show that point \ref{point 3, lemma 4.9} holds. Notice for all $i \in \N$ that $s_{i} = \sum_{j=1}^{i-1}3N_j.$ First consider the case that $\limsup_{i \to \infty}\frac{N_i}{s_{i}} = \infty$. Take the sequence $$x_i \coloneqq 2N_i + 1+s_{i}, \qquad i \in \N.$$ Since $h^{(N_i)}(2N_i+1) = 1$ for each $i \in \N$
       $$g(x_i) = g(2N_{i}+1+s_{i}) =s_{i} + h^{(N_i)}(2N_i+1)= s_{i}+1.$$ Therefore, acknowledging $g(0) = 0$ and $s_{i} \xrightarrow{i \to \infty} \infty$
   $$\liminf_{i \to \infty} \bigg|\frac{g(x_i) - g(0)}{x_i-0}\bigg|=\liminf_{i \to \infty}\bigg|\frac{s_{i}+1}{2N_i+s_{i}+1}\bigg| = \liminf_{i \to \infty} \Bigg|\frac{1+\frac{1}{s_{i}}}{\frac{2N_i}{s_{i}}+\frac{1}{s_{i}}}\Bigg|=0.$$ Thus, $g$ is not co-Lipschitz at $0,$ and so $g^{(d)}$ is nowhere co-Lipschitz.
   
   Conversely, suppose there exists $c>0$ such that for each $i \in \N_{\geq 2}$ we have $\frac{N_i}{s_{i}}\leq c.$ Let $x \in \Z \setminus \{0\}.$ Then there exists some $i \in \N$ such that $|x| \in s_{i}+[3N_i],$ and by the definition of $g$ we have that $|g(x)| \in s_{i}+[3N_i].$ Therefore,
   $$\frac{|g(x)-g(0)|}{|x-0|} \geq \frac{s_{i}+1}{s_{i}+3N_i} \geq \min\bigg\{\tfrac{1}{3N_1},\min_{i \in \N_{\geq 2}}\bigg\{\frac{1+\frac{1}{s_{i}}}{1+\frac{3N_i}{s_{i}}} \bigg\}\bigg\}\geq  \min\{\tfrac{1}{3N_1},\tfrac{1}{1+3c}\},$$ so $g$ is co-Lipschitz at $0$ and thus $g^{(d)}$ is pointwise co-Lipschitz.

 We now show that point \ref{point 4} holds. First suppose $\limsup_{i \to \infty}N_i = \infty$. For each $i \in \N_{\geq 2},$ let $x_i = s_{i} + 2N_i+1,$ and $y_i = s_{i} = s_{i-1}+3N_{i-1}.$ Notice that $$g(y_i) = g(s_{i-1}+3N_{i-1}) = s_{i-1} + h^{(N_{i-1})}(3N_{i-1}) =s_{i-1} + 3(3N_{i-1}) - 6N_{i-1} - 2 = s_{i-1} +3N_{i-1} -2 = s_{i}-2.$$ Thus
   $$\liminf_{i \to \infty}\frac{|g(x_i)-g(y_i)|}{|x_i-y_i|} = \liminf_{i \to \infty}\frac{|s_{i}+1-(s_i - 2)|}{2N_i+1} = \liminf_{i \to \infty}\frac{3}{2N_i+1} =0.$$ Thus, $g$ is not bi-Lipschitz.
   
   Conversely, assume there exists $C>0$ such that for each $i \in \N$ we have $3N_i \leq C.$ First, since $g(0) = 0,$ we trivially have that $|g(0)-0| \leq C.$ Otherwise, if $x \in \Z \setminus \{0\}$, then $|x| \in s_{i}+[3N_i]$ for some $i \in \N,$ and since $g(x) \in \text{sgn}(x)(s_{i} + [3N_i])$, we have that $|g(x)-x| \leq 3N_i \leq C.$ Thus, for each pair $x, y \in \Z$ with $x \neq y$ we have
   $$\frac{|x-y|}{|g(x)-g(y)|} \leq \frac{|g(x)-x| + |g(y)-y| + |g(x)-g(y)|}{|g(x)-g(y)|} \leq \frac{C}{1} + \frac{C}{1} + 1 = 2C+1,$$ so $g$ is bi-Lipschitz. By Lemma \ref{lemma: lip extension higher dim}, $g^{(d)}$ is bi-Lipschitz.

\end{proof}

\begin{remark}
    The construction of $g^{(d)}$ can be repeated using a `stretch and $n-$fold mapping,' for $n$ odd, in place of $h^{(N)}$ and we will emerge with a $(2n-1)-$Lipschitz bijection of $\Z^d$ which can have each of the bi-Lipschitz properties from Lemma \ref{lemma: bilipschitz properties}.
\end{remark}

\begin{example} \label{example: main}
We summarise all the possible bi-Lipschitz properties that can be achieved by the Lipschitz bijection $g^{(d)}:\Z^d \to \Z^d$ from Lemma \ref{lemma: bilipschitz properties} through the following examples. For each $i \in \N,$ if
\begin{enumerate}
  \item $N_i = 1,$ then $g^{(d)}$ is bi-Lipschitz (and is trivially pointwise co-Lipschitz)
    \item \label{enumerate: nowhere lip} $N_1 = 1$ and $N_i = 2^{i}\sum_{j=1}^{i-1}3N_j$ whenever $i \geq 2$, then $g^{(d)}$ is nowhere co-Lipschitz (and is trivially not bi-Lipschitz)
    \item \label{enumerate: pointwise non-bilip} $N_i = 2^{i},$ then $g^{(d)}$ is pointwise co-Lipschitz but not bi-Lipschitz.
\end{enumerate}
\end{example}
With this, we are ready to assemble a proof for Theorem \ref{thm:summary}.
\summary*
\begin{proof}
    Starting from the top-left and moving clockwise, the three equivalences ($\Updownarrow$) in \eqref{eq: thm summary} are given by Corollary \ref{cor: couni iff bilip}, Corollary \ref{cor: cohom equivalence} and Proposition \ref{thm: bilip}, respectively. The two forward implications ($\Longrightarrow$) in \eqref{eq: thm summary} are trivial. Moving left to right, the two reverse non-implications ($\mathrel{\rlap{\hskip .5em/}}\Longleftarrow$) of \eqref{eq: thm summary} are confirmed by points \ref{enumerate: pointwise non-bilip} and \ref{enumerate: nowhere lip} of Example \ref{example: main}, respectively.
\end{proof}
\subsection{Optimally $\omega-$co-homogeneous Lipschitz self-bijections of $\Z^d$}
By Lemma \ref{lemma: bilipschitz properties}, for a  sequence of natural numbers $(N_i)_{i \in \N}$ with $(N_{i+1}(\sum_{j=1}^{i}N_j)^{-1})_{i \in \N}$ bounded, the function $g^{(d)}$ is Lipschitz and pointwise co-Lipschitz. By Theorem \ref{thm:summary}, $g^{(d)}$ is co-homogeneous. Moreover, we have an explicit formula for a co-homogeneity function of $g^{(d)}$ from the proof of Theorem \ref{thm: co-hom equivalent to pointwise BL}, namely, define $\omega:[0,2) \to [0,\infty)$ by
$$\omega(t) \coloneqq \sup_{S>0} \sup_{\substack{u,v \in B(0,S) \cap \Z^d\\||u-v|| \leq tS}} \frac{||(g^{(d)})^{-1}(u)-(g^{(d)})^{-1}(v)||}{S}.$$
 In the next lemma, given $\omega$, we will provide a formula for a suitable sequence $(N_i)_{i \in \N}$ such that the Lipschitz bijection $g^{(d)}$ is $\omega-$co-homogeneous, with particular emphasis on the case $\omega(t) = t^{\alpha}$ for $\alpha \in (0,1],$ corresponding to McMullen's work. Moreover, we will verify that the resulting bijection admits no better co-homogeneity.

\begin{remark}
   Given $d \in \N,$ $K,X \subseteq \R^d,$ $\omega:[0,2) \to [0,\infty)$ with $\omega(0) = 0$ a non-decreasing function and $f:K \to X$ an $\omega-$homogeneous mapping, then if $\liminf_{t \to 0^+}\frac{\omega(t)}{t} =0,$ we have that $f$ is constant. Indeed, this is clear if $|K|\leq 1$, so assume $|K| \geq 2$, and fix $x, y \in K$ with $x \neq y.$ Since $\liminf_{t \to 0^+}\frac{\omega(t)}{t} =0,$ we can find a sequence $R_k \to \infty$ such that $$\frac{\omega\left(\frac{||x-y||}{R_k}\right)}{\frac{||x-y||}{R_k}} \to 0 \text{ as } k \to \infty.$$ Since $f$ is $\omega-$homogeneous, we have for each $k \in \N$ satisfying $x,y \in B(0,R_k)$ that
    $$||f(x)-f(y)|| \leq R_k\omega\left(\tfrac{||x-y||}{R_k}\right) = ||x-y|| \frac{\omega\left(\frac{||x-y||}{R_k}\right)}{\frac{||x-y||}{R_k}}$$ which goes to $0$ as $k \to \infty,$ and so $f(x) = f(y).$ Since $x$ and $y$ are arbitrary, we conclude $f$ is constant on $K.$ Thus, to construct non-constant homogeneous mappings, we will add the constraint that the function $\omega$ is concave and strictly increasing.
\end{remark}

\begin{lemma} \label{lemma: cohom examples}
Let $d \in \N$ and $\omega:[0,2) \to [0,\infty)$ be a concave, strictly increasing function with $\omega(0) = 0$. Then there exists a Lipschitz, $\omega-$co-homogeneous self-bijection of $\Z^d$ which is not $\widetilde{\omega}-$co-homogeneous for any concave, increasing function $\widetilde{\omega}$ satisfying $\widetilde{\omega}(0)=0$ and $\lim_{t \to 0^{+}}\tfrac{\widetilde{\omega}(t)}{\omega(t)}=0$. 
\end{lemma}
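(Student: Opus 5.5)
We take the self-bijection $g^{(d)}$ from Lemma~\ref{lemma: bilipschitz properties} and choose the sequence $(N_i)_{i\in\N}$ from $\omega$. It is already $5$-Lipschitz for every choice of $(N_i)$. The pair of points in $\Z$ that witnesses failure of bi-Lipschitzness in the proof of point~\ref{point 4} of Lemma~\ref{lemma: bilipschitz properties} also measures the co-homogeneity. These are $u_i=s_i+1=g(s_i+2N_i+1)$ and $v_i=s_i-2=g(s_i)$. They satisfy $|u_i-v_i|=3$, lie in $B(0,R)$ with $R\approx s_{i+1}$, and $|g^{-1}(u_i)-g^{-1}(v_i)|=2N_i+1$. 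So $\omega$-co-homogeneity essentially demands $N_i\lesssim s_{i+1}\,\omega(1/s_{i+1})$.

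We therefore define the sequence recursively: $N_1=1$, and for $i\ge 2$
\[
N_i\coloneqq\max\{1,\lfloor s_i\,\omega(1/s_i)\rfloor\},
\]
with $s_i=\sum_{j<i}3N_j$ as in \eqref{eq: si}. By concavity and $\omega(0)=0$, the map $s\mapsto s\,\omega(1/s)$ is non-decreasing, bounded below by $\omega(1)>0$ for $s\ge1$, and at most $s\,\omega(1)$. Hence $N_i\asymp s_i\omega(1/s_i)$ up to constants depending on $\omega(1)$, and $s_{i+1}\le(1+3\omega(1))s_i$. In particular $N_{i+1}/\sum_{j\le i}N_j$ is bounded, so $g^{(d)}$ is pointwise co-Lipschitz. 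This is a consistency check and is not needed directly.

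\textbf{Upper bound (main step).} Work first in $d=1$ and then lift. In $d$ dimensions only the first coordinate moves, and $\|(g^{(d)})^{-1}(u)-(g^{(d)})^{-1}(v)\|\le |g^{-1}(u_1)-g^{-1}(v_1)|+\|u-v\|$, so the $d=1$ bound transfers after adjusting the constant. Fix $R$ and $u,v\in B(0,R)\cap\Z$, and let $k$ be the largest block index with $s_k<R$.

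\emph{Step 1.} On each third of a block, $g^{-1}$ is $\tfrac13$-Lipschitz. Each block maps onto itself by \eqref{eq: image equality}, and within a block any two preimages differ by at most $3N_k$. Combining these gives
\[
|g^{-1}(u)-g^{-1}(v)|\le |u-v|+6\max_{j\le k}N_j = |u-v|+6N_k.
\]
The same estimate follows from point~\ref{enumerate: gd} via $|g^{-1}(u)-u|\le 3\max_{j\le k}N_j$.

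\emph{Step 2.} Set $t=|u-v|/R\ge 1/R$. Concavity gives two bounds. First, $R\omega(t)\ge \tfrac{\omega(2)}{2}|u-v|$, which controls the first term. Second, $R\omega(t)\ge R\omega(1/R)\ge s_k\omega(1/s_k)\gtrsim N_k$, using monotonicity of $s\omega(1/s)$ and $R>s_k$, which controls the second term.

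Together these give $|g^{-1}(u)-g^{-1}(v)|\le cR\omega(|u-v|/R)$. I expect the main bookkeeping obstacle to be handling $R$ between consecutive $s_k$ and the small initial blocks; the bound $s_{i+1}\lesssim s_i$ is what keeps the constants uniform.

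\textbf{Sharpness.} Suppose $g^{(d)}$ were $\widetilde\omega$-co-homogeneous with constant $\tilde c$ and $\widetilde\omega/\omega\to0$. Apply the definition to $(u_i,0,\dots,0)$ and $(v_i,0,\dots,0)$ with $R_i=s_i+2$. This gives
\[
2N_i+1\le \tilde c R_i\widetilde\omega(3/R_i)=o\big(R_i\omega(3/R_i)\big)\le o\big(3R_i\omega(1/R_i)\big),
\]
where the last step uses concavity: $\omega(3t)\le3\omega(t)$. The right-hand side is $o(s_i\omega(1/s_i))$, while the left-hand side is $\gtrsim s_i\omega(1/s_i)$. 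This contradicts $N_i\asymp s_i\omega(1/s_i)$ as $i\to\infty$.
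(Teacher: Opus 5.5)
Your proof is correct. Its skeleton matches the paper's: the same bijection $g^{(d)}$, the same upper bound, and the same witness pair $s_i+1$, $s_i-2$ for sharpness. The upper bound in both is the displacement estimate of point~\ref{enumerate: gd} in Lemma~\ref{lemma: bilipschitz properties}, plus the triangle inequality, followed by $N_k\lesssim R\omega(1/R)$.

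\textbf{The real difference: how $(N_i)$ is chosen.}
\begin{itemize}
\item The paper sets $N_i=\lceil S_i-S_{i-1}\rceil$, with $S_i=1/\Psi^{-1}(i)$ and $\Psi(u)=\int_u^1\frac{dt}{t\omega(t)}$. This samples the solution of $S'=S\,\omega(1/S)$ at the integers. The paper must then prove that $\Psi$ is an invertible surjection, the bounds \eqref{eq: Si} and \eqref{eq: Si 2}, Claim~\ref{claim: I}, the comparisons \eqref{eq: sum Nj condition 2} and \eqref{eq: sum Nj condition}, and monotonicity of $(N_i)$ via convexity of $i\mapsto S_i$.
\item Your recursion $N_i=\max\{1,\lfloor s_i\omega(1/s_i)\rfloor\}$ is a discrete (Euler) version of the same ODE. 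Everything you need follows from the single fact that $s\mapsto s\,\omega(1/s)$ is non-decreasing. This covers $N_i\asymp s_i\omega(1/s_i)$, the bound $N_k\lesssim s_k\omega(1/s_k)\le R\omega(1/R)$ for $s_k<R$, and monotonicity of $(N_i)$, which you use silently in writing $\max_{j\le k}N_j=N_k$.
\item Because $N_k$ is tied to the left endpoint $s_k<R$ of the block, you never actually need $s_{i+1}\lesssim s_i$, despite your worry at the end of Step~2.
\item Your sharpness step is slightly more general. You compare at the true argument $3/R_i$ and use concavity only of $\omega$, so $\widetilde{\omega}$ need not be concave. The paper's comparison with $R_i\widetilde{\omega}(1/R_i)$ tacitly uses $\widetilde{\omega}(3t)\le 3\widetilde{\omega}(t)$.
\item What the paper's integral formula buys is an explicit, non-recursive $N_i$. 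Growth rates such as $N_i\asymp i^{1/\alpha-1}$ in the H\"older example can be read off directly; with your recursion they would have to be solved for separately.
\end{itemize}

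\textbf{Small repairs.}
\begin{itemize}
\item Since $\omega$ is defined only on $[0,2)$, $\omega(2)$ is meaningless. Use $\omega(t)\ge\frac{\omega(1)}{2}t$ for $t\in[0,2)$ instead. This follows from monotonicity of $\omega(t)/t$ on $(0,1]$ and from $\omega(t)\ge\omega(1)$ on $[1,2)$.
\item Likewise $1/R$ lies in the domain only when $R>\frac12$. For $R\le 1$, however, $B(0,R)\cap\Z^d=\{0\}$, so there is nothing to prove.
\item The bound $s_{i+1}\le(1+3\omega(1))s_i$ can fail when $s_i\omega(1)<1$; there $s_{i+1}=s_i+3\le 2s_i$ instead. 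As noted above, you don't need this bound.
\item In the sharpness step, passing from $o(R_i\omega(1/R_i))$ to $o(s_i\omega(1/s_i))$ needs $(s_i+2)\,\omega\big(\tfrac{1}{s_i+2}\big)\le (s_i+2)\,\omega\big(\tfrac{1}{s_i}\big)\le 2s_i\,\omega\big(\tfrac{1}{s_i}\big)$. Monotonicity of $s\,\omega(1/s)$ points the wrong way here, so state this inequality explicitly.
\end{itemize}
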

\begin{proof}
Without loss of generality, we may assume $\omega(1) = 1$, since a function is $\omega-$co-homogeneous if and only if it is $\frac{\omega}{\omega(1)}-$co-homogeneous. Let the sequence of natural numbers $(N_i)_{i \in \N}$ given by
$$N_i = \left\lceil \tfrac{1}{\Psi^{-1}(i)}-\tfrac{1}{\Psi^{-1}(i-1)}\right\rceil \qquad \text{with } \Psi(u)\coloneqq \int_{u}^{1}\frac{dt}{t \omega(t)}, \qquad u \in (0,1]$$
and let $g^{(d)}:\Z^d \to \Z^d$ be the Lipschitz bijection  dependent on $(N_i)_{i \in \N}$ from Lemma~\ref{lemma: bilipschitz properties}.
We will show $g^{(d)}$ is $\omega-$co-homogeneous. 
    First, we justify that $\Psi:(0,1] \to [0,\infty)$ is well-defined, invertible and surjective, and that $(N_i)_{i \in \N}$ is indeed a sequence of natural numbers. Since $\omega$ is an increasing function, we have that $\omega(t) \leq \omega(1) = 1$ for each $t \in (0,1].$ Moreover, $\omega$ is concave and thus continuous on $(0,1]$. Hence, by the comparison test for integrals $$\int_u^1 \frac{dt}{t\omega(t)} \leq \int_u^1 \frac{dt}{t} = -\log u, \qquad u \in (0,1],$$ so we have that $\Psi$ is finite and thus well-defined on $(0,1]$. Next, since $\omega(t) \leq 1$ for each $t \in (0,1],$ we have that
    $$\Psi(u)-\Psi(u+h) = \int_u^{u+h} \frac{dt}{t\omega(t)} \geq h\inf_{t \in [u,u+h]}\frac{1}{t\omega(t)} \geq h>0, \qquad u \in [0,1),\;h\in (0,1-u]$$
    so $\Psi$ is strictly decreasing, and hence $\Psi$ is invertible. We also have by the concavity of $\omega$ that $\omega(t) \geq t\omega(1)=t$ for each $t \in (0,1],$ so by the comparison test for integrals again
    $$\Psi(u) =\int_u^1 \frac{dt}{t\omega(t)} \geq \int_u^1 \frac{dt}{t^2} = \frac{1}{u}-1 \xrightarrow{u \to 0^+}\infty.$$ Hence, since $\Psi$ is differentiable and thus continuous by the fundamental theorem of calculus, then by the intermediate value theorem we have that $\Psi$ is surjective.
    Finally, since $\Psi$ is strictly decreasing, we have that $\Psi^{-1}$ is strictly decreasing too, so $N_i$ is a natural number for each $i \in \N.$

    Now we show $g^{(d)}$ is $\omega-$co-homogeneous. First, for each $i \in [0,\infty)$ we write $S_i \coloneqq \frac{1}{\Psi^{-1}(i)}.$ Fixing $i \in \N$
    \begin{equation*}
        1 = i-(i-1) = \Psi\left(\tfrac{1}{S_{i}}\right) - \Psi\left(\tfrac{1}{S_{i-1}}\right) = \int_{\frac{1}{S_i}}^{\frac{1}{S_{i-1}}}\frac{dt}{t\omega(t)}.
    \end{equation*}
    Now since $\frac{1}{t\omega(t)}$ is decreasing in $t$, and $S_i \geq S_{i-1}$ since $\Psi^{-1}$ is decreasing, we can bound the right-hand integral to obtain
    \begin{equation*}
        \left(\tfrac{1}{S_{i-1}}-\tfrac{1}{S_i}\right) \frac{1}{\frac{1}{S_i}\omega(\frac{1}{S_i})} \leq   1 \leq  \left(\tfrac{1}{S_{i-1}}-\tfrac{1}{S_i}\right) \frac{1}{\frac{1}{S_{i-1}}\omega(\frac{1}{S_{i-1}})},
    \end{equation*} so 
$$\omega\left(\tfrac{1}{S_i}\right) \geq S_i\left(\tfrac{1}{S_{i-1}}-\tfrac{1}{S_i}\right)  = \tfrac{S_i-S_{i-1}}{S_{i-1}}$$ and
$$\omega\left(\tfrac{1}{S_{i-1}}\right) \leq S_{i-1}\left(\tfrac{1}{S_{i-1}}-\tfrac{1}{S_i}\right) = \tfrac{S_i-S_{i-1}}{S_i}.$$
    Hence, we obtain 
\begin{equation}\label{eq: Si}
        N_i = \left\lceil S_i-S_{i-1}\right\rceil \leq \left\lceil S_{i-1}\omega\left(\tfrac{1}{S_i}\right)\right\rceil \leq 1+ S_{i-1}\omega\left(\tfrac{1}{S_i}\right)
    \end{equation} and
    \begin{equation}\label{eq: Si 2}
        N_i = \left\lceil S_i-S_{i-1}\right\rceil \geq  S_i-S_{i-1} \geq S_{i}\omega\left(\tfrac{1}{S_{i-1}}\right).
    \end{equation}
    \begin{claim}\label{claim: I}
       For each $i \in \N$ we have that
        $S_i \geq i.$
    \end{claim}
    \begin{proof}
Notice for each $i \in \N$ that
\begin{equation} \label{eq: eq in claim}
    \Psi\left(\tfrac{1}{i}\right) = \int_{\frac{1}{i}}^1 \frac{dt}{t\omega(t)} \leq \int_{\frac{1}{i}}^1 \frac{dt}{t^2} = i-1\leq i,
\end{equation}
 where for the first inequality we used the concavity of $\omega$ to say $\omega(t) \geq \omega(1)t = t$ for each $t \in (0,1].$ Since $\Psi$ is decreasing, applying $\Psi^{-1}$ to both sides of \eqref{eq: eq in claim} gives for each $i \in \N$ that $i\leq S_i.$
    \end{proof}
    By Claim \ref{claim: I}, for each $i\in \N$ we have that
  \begin{equation} \label{eq: sum Nj condition 2}
        \sum_{j=1}^iN_j = \sum_{j=1}^i\lceil S_j-S_{j-1}\rceil \leq \sum_{j=1}^i (S_j-S_{j-1}+1) = S_i -S_0+i\leq S_i+i \leq 2S_i.
 \end{equation} Similarly, since $S_0 = 1,$ notice
 \begin{equation} \label{eq: sum Nj condition prime}
        \sum_{j=1}^iN_j = \sum_{j=1}^i\lceil S_j-S_{j-1}\rceil \geq \sum_{j=1}^i S_j-S_{j-1} = S_i -S_0
 = S_i -1.
 \end{equation} Note that the last expression of \eqref{eq: sum Nj condition prime} is at least $\frac{S_i}{2}$ whenever $S_i \geq 2.$ By Claim \ref{claim: I}, $S_i \geq i \geq 2$ for each $i \geq 2.$ If $i = 1$ and $S_1 =1,$ the left-hand expression of \eqref{eq: sum Nj condition prime} gives $N_1$ which is at least $1$ by definition, and so $N_1 \geq \frac{S_1}{2}.$ Together, we obtain for each $i \in \N$ that
 \begin{equation} \label{eq: sum Nj condition}
     \sum_{j=1}^i N_j \geq \tfrac{S_i}{2}.
 \end{equation}
  Now fix $R>0.$ We have that $R \in (3\sum_{j=1}^{i-1}N_j,3\sum_{j=1}^{i}N_j]$ for some $i \in \N.$ Thus, using \eqref{eq: Si}, \eqref{eq: sum Nj condition 2} and \eqref{eq: sum Nj condition}
    \begin{equation} \label{eq: 4max}
        \begin{split}
            N_i &\leq 1+S_{i-1}\omega\left(\tfrac{1}{S_i}\right) \leq 1+2\left(\sum_{j=1}^{i-1}N_j\right)\omega\left(\frac{2}{\sum_{j=1}^iN_j}\right)
             \\&\leq 1+\tfrac{2R}{3}\omega\left(\tfrac{6}{R}\right) \leq R\omega\left(\tfrac{1}{R}\right) +\tfrac{2R}{3}6\omega\left(\tfrac{1}{R}\right) = 5R\omega\left(\tfrac{1}{R}\right),
        \end{split}
    \end{equation} where in the fourth inequality we used the concavity of $\omega$ to show ${\omega(\frac{6}{R}) \leq 6\omega(\frac{1}{R})}$ and $1=\omega(1) \leq R\omega(\frac{1}{R}).$
    Now we need the following claim.
    \begin{claim} \label{claim: nondec}
        $(N_i)_{i \in \N}$ is a non-decreasing sequence.
    \end{claim}
    \begin{proof}
    First, we claim that $\frac{\omega(t)}{t}$ is a non-increasing function on $(0,1]$. Let $t_1,t_2 \in (0,1]$ with $t_1\leq t_2$. Since $\omega$ is concave and $\omega(0) = 0$
        $$\omega(t_1) =\omega\left(\left(1-\tfrac{t_1}{t_2}\right)0+\tfrac{t_1}{t_2} t_2\right) \geq \left(1-\tfrac{t_1}{t_2}\right)\omega(0) + \tfrac{t_1}{t_2}\omega(t_2) = \tfrac{t_1}{t_2}\omega(t_2),$$
        so $\frac{\omega(t_1)}{t_1} \geq \frac{\omega(t_2)}{t_2},$ as required.
        Now by the fundamental theorem of calculus, we have that
        $$\Psi'(u) = -\tfrac{1}{u\omega (u)}, \qquad u \in (0,1].$$ Hence, for each $i \in [0,\infty)$ we have that
        \begin{equation*}
            (\Psi^{-1}(i))' = \tfrac{1}{\Psi'(\Psi^{-1}(i))} = -\Psi^{-1}(i)\omega(\Psi^{-1}(i)).
        \end{equation*}
        Thus, for each $i \in [0,\infty)$
        \begin{equation*}
        \begin{split}
            S_i' &= \left(\tfrac{1}{\Psi^{-1}(i)}\right)' = -\tfrac{1}{(\Psi^{-1}(i))^2} \left(\Psi^{-1}(i)\right)' = -\tfrac{1}{(\Psi^{-1}(i))^2} \cdot-\Psi^{-1}(i)\omega(\Psi^{-1}(i)) = \tfrac{\omega(\Psi^{-1}(i))}{\Psi^{-1}(i)}.
            \end{split}
        \end{equation*} Since both $\Psi^{-1}$ and $\frac{\omega(t)}{t}$ are non-increasing functions, we have that $S_i'$ is a non-decreasing function, and so $S_i$ is convex.
        Therefore, for each $i \in \N$
        $$2S_i = 2S_{\frac{1}{2}(i-1)+\frac{1}{2}(i+1)} \leq 2\left({\tfrac{1}{2}S_{i-1} +\tfrac{1}{2}S_{i+1}}\right)=S_{i-1}+S_{i+1}$$
        so we have that
        $$S_{i+1}-S_i \geq S_i - S_{i-1} \qquad \text{for each } i \in \N.$$ 
        Taking the ceiling of both sides, we obtain that $N_{i+1} \geq N_i$ for each $i \in \N,$ as required.
    \end{proof}
    With the insertion of Claim \ref{claim: nondec}, point \ref{enumerate: gd} of Lemma \ref{lemma: 5-lipschitz} gives that
    $$||(g^{(d)})^{-1}(x)-x|| \leq 3N_i \qquad \text{whenever $||x|| \leq 3\sum_{j=1}^iN_j$}.$$
    Then fixing $x,y \in B(0,R) \cap \Z^d$ with $x \neq y,$ we have that
    \begin{equation*}
    \begin{split}
        ||(g^{(d)})^{-1}(x) - (g^{(d)})^{-1}(y)|| &\leq ||(g^{(d)})^{-1}(x)-x|| + ||(g^{(d)})^{-1}(y)-y|| + ||x-y|| \leq 6N_i + ||x-y||\\
        &\leq 30R\omega\left(\tfrac{1}{R}\right)+R\omega\left(\tfrac{||x-y||}{R}\right)\leq 31R\omega\left(\tfrac{||x-y||}{R}\right),
         \end{split}
    \end{equation*} where for the penultimate inequality we used \eqref{eq: 4max} and that $\omega(t) \geq t$ for each $t \in (0,1]$ by the concavity of $\omega$, and for the last inequality we used that $||x-y|| \geq 1.$ Thus, $g^{(d)}$ is $\omega-$co-homogeneous.

    It remains to check $g^{(d)}$ does not witness any `better' co-homogeneity. Fix a concave, strictly increasing function $\widetilde{\omega}:[0,2) \to [0,\infty)$ with $\widetilde{\omega}(0) = 0$ and $\lim_{t \to 0^+} \frac{\widetilde{\omega}(t)}{\omega(t)} = 0$. 
    Define the sequences $(R_i)_{i \in \N}\subseteq \N,$ and $(x_i)_{i \in \N},(y_i)_{i \in \N} \subseteq \N^d$ as
    $$R_i = 3\sum_{j=1}^{i+1}N_j, \qquad x_i = \left(1+3\sum_{j=1}^{i}N_j,0,...,0\right),\qquad y_i =\left(-2+3\sum_{j=1}^{i}N_j,0,...,0\right) .$$
  Notice for each $i \in \N$ that $x_i,y_i \in B(0,R_i) \cap \Z^d.$ 
  By \eqref{eq: sum Nj condition 2}, \eqref{eq: sum Nj condition} and the fact that $S_{i+1} \geq S_i,$ we have that $$\tfrac{3S_{i}}{2} \leq \tfrac{3S_{i+1}}{2} \leq R_i \leq 6S_{i+1}, \qquad i \in \N.$$ Together with \eqref{eq: Si 2}, we have that
  \begin{equation} \label{eq: niriomegai}
\frac{N_{i+1}}{R_i\omega\left(\frac{1}{R_i}\right)} \geq \frac{N_{i+1}}{6S_{i+1}\omega\left(\frac{2}{3S_{i}}\right)}\geq \frac{N_{i+1}}{6S_{i+1}\omega\left(\frac{1}{S_{i}}\right)}\geq \frac{1}{6}, \qquad i \in \N. \end{equation}
 Noting by the definition of $g^{(d)}$ that $(g^{(d)})^{-1}(x_i) = x_i+(2N_{i+1},0,...,0)$ and $(g^{(d)})^{-1}(y_i) = y_i + (2,0,...,0) = x_i-(1,0,...,0),$ we have that
  \begin{equation*}
      \begin{split}
          \frac{||(g^{(d)})^{-1}(x_i) - (g^{(d)})^{-1}(y_i)||}{R_i \widetilde{\omega}\left(\frac{1}{R_i}\right)}  &= \frac{2N_{i+1}+1}{R_i \widetilde{\omega}\left(\frac{1}{R_i}\right)} \geq \frac{N_{i+1}}{R_i \widetilde{\omega}\left(\frac{1}{R_i}\right)}
          \geq \frac{1}{6}\frac{{\omega}\left(\frac{1}{R_i}\right)}{\widetilde{\omega}\left(\frac{1}{R_i}\right)}\xrightarrow{i \to \infty}\infty,
      \end{split}
  \end{equation*} where for the final inequality we used \eqref{eq: niriomegai}. Hence, $g^{(d)}$ is not $\widetilde{\omega}-$co-homogeneous.
\end{proof}
    \begin{example}
        If $\alpha \in (0,1]$, using the formula from the proof of Lemma \ref{lemma: cohom examples}, the sequence $(N_i)_{i \in \N}$ given by $$N_i\coloneqq \lceil(\alpha i+1)^{\frac{1}{\alpha}}-(\alpha (i-1)+1)^{\frac{1}{\alpha}} \rceil \asymp i^{\frac{1}{\alpha}-1}, \qquad i \in \N$$ witnesses that the Lipschitz bijection $g^{(d)}$ is co-$\alpha-$H\"{o}lder homogeneous. Moreover, $g^{(d)}$ is not co-$\tilde{\alpha}-$H\"{o}lder homogeneous for any $\tilde{\alpha} >\alpha.$
    \end{example}

    \paragraph{Acknowledgements}
    AB thanks the University of Birmingham for financial support.

\begin{flushleft}
    Michael Dymond,\\
    School of Mathematics, University of Birmingham, Birmingham, B15 2TT, United Kingdom.\\
    \href{mailto:m.dymond@bham.ac.uk}{m.dymond@bham.ac.uk} 
\end{flushleft}

\begin{flushleft}
    Ashwin Bhat,\\
    School of Mathematics, University of Birmingham, Birmingham, B15 2TT, United Kingdom.\\
    \href{mailto:axb1805@student.bham.ac.uk}{axb1805@student.bham.ac.uk} 
\end{flushleft}
\end{document}

%% file: macros.tex
\usepackage{amsmath}
\usepackage{amssymb}
\usepackage{enumerate}
\usepackage{amsthm}
\usepackage{todonotes}
\usepackage{mathtools} 
\usepackage[UKenglish]{babel}
\usepackage[T1]{fontenc}
\usepackage[utf8]{inputenc}
\usepackage{lmodern} 
\usepackage{tikz}
\usepackage{thmtools} 
\usepackage{thm-restate} 
\usepackage{stackrel} 
\usepackage{hyperref}
\usepackage[left = 23mm,right=23mm, top = 23mm,bottom =23mm, paper = a4paper]{geometry}
\usetikzlibrary {arrows.meta}
\usetikzlibrary{decorations.pathreplacing}
\usepgflibrary {shadings}

\declaretheorem[name=Theorem,numberwithin=section]{thm} 
\newtheorem*{thm*}{Theorem}

\newtheorem*{define*}{Definition}
\newtheorem{define}[thm]{Definition}

\newtheorem*{lemma*}{Lemma}
\newtheorem{lemma}[define]{Lemma}

\newtheorem*{algorithm*}{Algorithm}

\newtheorem*{construction*}{Construction}

\newtheorem*{prop*}{Proposition}
\newtheorem{prop}[define]{Proposition}

\newtheorem*{obs*}{Observation}

\newtheorem*{fact*}{Fact}

\newtheorem*{remark*}{Remark}
\newtheorem{remark}[define]{Remark}

\newtheorem*{claim*}{Claim}
\newtheorem{claim}[define]{Claim}

\newtheorem*{quest*}{Question}

\newtheorem*{cor*}{Corollary}
\newtheorem{cor}[define]{Corollary}

\newtheorem*{conjecture*}{Conjecture}

\newtheorem*{question*}{Question}
\newtheorem{question}[define]{Question}

\newtheorem*{example*}{Example}
\newtheorem{example}[define]{Example}

\usepackage[mathlines]{lineno}
\usepackage[normalem]{ulem}
\usepackage{graphicx}

\usepackage{color}
\definecolor{grey}{rgb}{.7,.7,.7}
\definecolor{blue}{rgb}{0,0,.8}
\definecolor{red}{rgb}{.8,0,0}
\definecolor{green}{rgb}{0,.4,0}
\definecolor{gold}{rgb}{0.8,0.6,0.1}
\definecolor{brown}{rgb}{0.8,0.4,0.1}
\definecolor{arxivcolor}{rgb}{0.5,0.5,0}
\definecolor{journalcolor}{rgb}{0.5,0,1}
\definecolor{purple}{rgb}{0.6,0.2,0.6}
\definecolor{pastelgreen}{rgb}{0,0.65,0.1}

\long\def\del#1{\color{blue}\ifmmode\text{\sout{\ensuremath{#1}}}\else\sout{#1}\fi\normalcolor}

\newcommand{\R}{\mathbb{R}}

\newcommand{\Z}{\mathbb{Z}}
\newcommand{\N}{\mathbb{N}}

\newcommand{\leb}{\mc{L}}

\newcommand{\bilip}{\operatorname{bilip}}
\DeclareMathOperator{\diam}{diam}

\newcommand{\infnorm}[1]{\left\|#1\right\|_\infty}

\newcommand{\mc}[1]{\mathcal{#1}}

